\def\BState{\State\hskip-\ALG@thistlm}
\definecolor{cpurple}{rgb}{0.6,0,0.6}
\definecolor{niceRed}{RGB}{190,38,38}
\definecolor{blueGrotto}{HTML}{059DC0}
\definecolor{royalBlue}{HTML}{057DCD}
\definecolor{navyBlueP}{HTML}{0B579C}
\definecolor{limeGreen}{HTML}{81B622}
\crefname{ineq}{Inequality}{Inequality}
\crefname{sub}{Subsection}{Subsection}
\crefname{sdp}{SDP}{SDP}
\crefname{lp}{LP}{LP}
\theoremstyle{plain}
\newtheorem{theorem}{Theorem}[section]
\newtheorem{informal}{Informal Theorem}[]
\newtheorem{lemma}[theorem]{Lemma}
\newtheorem{informal theorem}[theorem]{Theorem (informal statement)}
\newtheorem{claim}[theorem]{Claim}
\theoremstyle{definition}
\newtheorem{definition}[theorem]{Definition}
\theoremstyle{remark}           \newtheorem{remark}[theorem]{Remark}
\newtheorem{example}[theorem]{Example}
\numberwithin{equation}{section}
\newcommand{\repeattheorem}[1]{\begingroup
  \renewcommand{\thetheorem}{\ref{#1}}\expandafter\expandafter\expandafter\theorem
  \csname reptheorem@#1\endcsname
  \endtheorem
  \endgroup
}
\newcommand{\repeatlemma}[1]{\begingroup
  \renewcommand{\thelemma}{\ref{#1}}\expandafter\expandafter\expandafter\lemma
  \csname replemma@#1\endcsname
  \endlemma
  \endgroup
}
\newcommand{\vol}{\mathrm{vol}}
\newcommand{\lp}{\left}
\newcommand{\rp}{\right}
\newcommand\norm[1]{\left\| #1 \right\|}
\newcommand\snorm[2]{\left\| #2 \right\|_{#1}}
\newcommand\abs[1]{\left| #1 \right|}
\renewcommand\vec[1]{\boldsymbol{#1}}
\DeclareMathOperator*{\E}{\mathbf{E}}
\newcommand{\me}{\mathrm{e}}
\newcommand{\mrm}{\mathrm}
\def\d{\mathrm{d}}
\newcommand{\LL}{\mathcal{L}}
\newcommand{\mcal}{\mathcal}
\DeclareMathOperator*{\argmin}{argmin}
\DeclareMathOperator*{\argmax}{argmax}
\newcommand{\bx}{\boldsymbol{x}}
\newcommand{\by}{\boldsymbol{y}}
\newcommand{\bv}{\boldsymbol{v}}
\newcommand{\bu}{\boldsymbol{u}}
\newcommand{\bw}{\boldsymbol{w}}
\newcommand{\R}{\mathbb{R}}
\newcommand{\N}{\mathbb{N}}
\newcommand{\eps}{\epsilon}
\newcommand{\dtv}{\mathrm{d}_{\mathrm{TV}}}
\newcommand{\dkl}{\mathrm{D}_{\mathrm{KL}}}
\newcommand{\poly}{\mathrm{poly}}
\newcommand{\D}{\mathcal{P}}
\newcommand{\Ind}{\mathds{1}}
\newcommand{\1}{\Ind}
\newcommand{\wt}{\widetilde}
\pgfplotsset{compat=1.7}
\newcommand{\diam}{\mathrm{diam}}
\newcommand{\dr}{\mathbf{D}}
\renewcommand{\bar}[1]{\overline{#1}}
\newcommand{\calL}{\mathcal{L}}
\title{A Sample-Based Taylor’s Theorem for Density Estimation}
\title{Taylor from Samples for Density Estimation}
\title{A Statistical Taylor Theorem\\ and Extrapolation of Truncated Densities}
\author{
\textbf{Constantinos Daskalakis} \\
Massachusetts Institute of Technology\\
\url{costis@csail.mit.edu}\\
\and
\textbf{Vasilis Kontonis} \\
University of Wisconsin-Madison\\
\url{kontonis@wisc.edu }\\
\and
\textbf{Christos Tzamos}\\
University of Wisconsin-Madison\\
\url{tzamos@wisc.edu}
\and
\textbf{Manolis Zampetakis}\\
Massachusetts Institute of Technology\\
\url{mzampet@mit.edu}\\
}
\date{}
\begin{document}

\maketitle

\begin{abstract}
We show a statistical version of Taylor's theorem and apply this result to non-parametric density estimation from truncated samples, which is a classical challenge in Statistics \cite{woodroofe1985estimating, stute1993almost}. The single-dimensional version of our theorem has the following implication: ``For any distribution $P$ on $[0, 1]$ with a smooth log-density function, given samples from the conditional distribution of $P$ on 
$[a, a + \varepsilon] \subset [0, 1]$, we can efficiently identify an approximation to  $P$ over the \emph{whole} interval $[0, 1]$, with quality of approximation that improves with the smoothness of $P$.''

To the best of knowledge, our result is the first in the area of non-parametric density estimation from truncated samples, which works under the hard truncation model, where the samples outside some survival set $S$ are never  observed, and applies to multiple dimensions. In contrast, previous works assume single dimensional data where each sample has a different survival set $S$ so that samples from the whole support will ultimately be collected.
\footnote{Accepted for presentation at the Conference on Learning Theory (COLT) 2021.}
\end{abstract}

\newpage

\section{Introduction} \label{sec:intro}

Non-parametric density estimation is a well-developed field in Statistics and 
Machine Learning \cite{wasserman2006all,tsybakov2008introduction} with 
applications to many scientific areas including economics 
\cite{ahamada2010non, li2007nonparametric}, and survival analysis
\cite{woodroofe1985estimating}. A central challenge in this field is estimating
a probability density function $\D(\vec{x})$ from samples, without making strong
parametric assumptions about the density. Of course, this is quite challenging
as $\D$ may exhibit very rich behavior which might be difficult or information
theoretically impossible to discern given a finite number of samples. Thus, to
make the task feasible at all, some constraints are placed on $\D$, typically in
the form of smoothness, which allows estimators to {\em interpolate} among the
observed samples. Indeed, a prominent method for non-parametric density
estimation is based on kernels 
\cite{WandJ1994, kernelmethods, Simonoff12, Scott15}, whose usual interpolating
estimate takes the form 
$\hat{\D}(\vec{x})={\frac{1}{n}} \sum_{i=1}^n k(\vec{x}_i; \vec{x})$, for some
kernel function $k(\cdot; \cdot)$, where $\vec{x}_1, \ldots, \vec{x}_n$ are the
observations from $\D$. In some settings it is also preferable to use kernels to 
estimate the log-density function \cite{canu2006kernel}. Even with smoothness
assumptions, the problem is challenging enough information theoretically, that
the achievable error takes the form $O(n^{-r/(r + d)})$, under various norms,
where $d$ is the dimension and $r$ is the assumed order of smoothness of $\D$
\cite{mcdonald2017minimax, li2007nonparametric, barron1992distribution}. 

  Despite the fact that both kernel based and histogram based estimators achieve
the optimal consistency rates, their estimators do not have a form that is
appealing for other statistical uses. For example, if our goal is to do inference
as well then it would be helpful if the estimated distribution is represented as
a member of an exponential family \cite{neyman1937smooth, good1963maximum, sriperumbudur2017density}. A
parallel line of research has hence devoted in 
\textit{exponential series estimators} of non-parametric densities, starting with
the celebrated work of Barron and Sheu \cite{BS91} which was later extended to
multidimensional settings by \cite{wu2010exponential}. Our work follows this
line of research and the estimators that we compute are always members of some
exponential family.

  The goal of this paper is to extend this literature from the traditional
{\em interpolating} regime to the much more challenging {\em extrapolating}
regime. In particular, we consider settings wherein we are constrained to
observe samples of $\D$ in a {\em subset} of its support, yet we want to procure
estimates $\hat{\D}$ that approximate $\D$ over its {\em entire} support. This 
question problem is motivated by truncated statistics, another well-developed
field in Statistics and Econometrics 
\cite{cohen1991truncated,heckman1976common,maddala1987limited,borsch1993smooth},
which targets statistical estimation in settings where the samples are truncated
depending on their membership in some set. Truncation may occur for several
reasons, ranging from measurement device saturation effects to data collection
practices, bad experimental design, ethical or privacy considerations that 
disallow the use of some data, etc. 

  Non-parametric density estimation from truncated samples is well-studied 
problem in statistics with many applications in economics and survival analysis
\cite{padgett1984nonparametric, woodroofe1985estimating, lai1991estimating, stute1993almost, gajek1988minimax, lai1991estimating}. 
However, due to the very challenging nature of this problem, all the previous
works on this topic consider only a soft truncation model that does not 
completely hide some part of the support but only decreases the probability of 
observing samples that lie in the truncation set. In particular, each sample 
$\vec{x}_i$ from $\D$ also samples a truncation set $S_i$ which then determines 
whether this sample is truncated or not. As a result, samples from the entire 
support are ultimately collected, thus the unknown density can be interpolated,
with some appropriate re-weighting, from those samples covering the entire
support. Moreover, the existing work only targets single-dimensional
densities despite the importance of non-parametric estimation in multiple
dimensions as we discussed above.  

  In this work, our goal is to solve the seemingly impossible problem of 
estimating a non-parametric density, even in parts of the support where we do 
not observe any sample! More precisely, we consider the more standard, in truncated
statistics, hard truncation model, wherein there is a fixed set $S$ that
determines whether a sample from $\D$ is truncated. We solve this
problem under slightly stronger, but similar, assumptions to the ones used in the
vanilla non-parametric density estimation problems. At the same time, we extend
the non-parametric density estimation from truncated samples to multi-dimensional settings, 
which is a significant generalization of the
existing work.

  Our main theorems, summarized in \Cref{sec:results}, can be interpreted
as a statistical version of Taylor's theorem, which allows us to use 
{\em truncated samples} from some sufficiently smooth density $\D$ and 
{\em extrapolate} from these samples an estimate $\hat{\D}$ which approximates
$\D$ on its {\em entire support}. The statistical rates achieved by our theorems
are slightly worse but comparable to those known in non-parametric density
estimation under {\em untruncated} samples, i.e., in the {\em interpolating} 
regime. It is an interesting open problem whether we can improve the novel
\textit{extrapolation} rates that we provide in this work, to match exactly the 
\textit{interpolation} rates of the vanilla non-parametric density estimation.

  From a technical point of view, a central challenge that we face is to bound 
the extrapolation error of multivariate polynomial approximation, which is a 
challenging problem and is a subject of active area of research. Our main 
technical contribution is to show a novel way to prove strong bounds on the
extrapolation error of our algorithms invoking only well-studied
\textit{anti-concentration} theorems, which is of independent interest and we 
believe that it will have applications beyond truncated statistics. More 
precisely, one of our main technical results is a ``Distortion of Conditioning''
lemma (\Cref{lem:distance_reduction}), providing a tight relationship
between the $\ell_1$ distance between two exponential families as computed under
conditioning on different subsets of the support. As we said, this lemma is 
proven using probabilistic techniques, and provides a viable route to prove our
statistical Taylor result in high dimensions, where polynomial approximation
theory techniques do not appear sufficient.
\medskip

\noindent \textbf{Further Related Work.}
  The use of exponential series estimators in non-parametric density estimation 
problems has many applications in other important problems in statistics, e.g., 
entropy estimation problems \cite{behmardi2011entropy, wang2013consistency}, 
estimation of copula functions \cite{chen2006efficient}, and online density 
estimation \cite{gokcesu2017online}. We believe that our results and tools can be
a cornerstone in extending these results to their truncated statistics
counterparts.

  Despite its long history, the field of truncated statistics suffers from a
lack of computationally and statistically efficient estimators in 
high-dimensions. Recent work, has made progress towards rectifying these 
limitations in parametric settings such as multi-variate normal estimation, 
linear regression, logistic regression, and support estimation
\cite{DaskalakisGTZ18,DaskalakisGTZ19,KontonisTZ19,IlyasZD20,CannoneDeS19}. 
Roughly speaking, this recent progress exploits the strong parametric 
assumptions about the density that is being estimated to extrapolate the density
outside of the truncation set. In comparison to this work, our goal here is to 
remove these parametric assumptions, allowing a very broad family of 
distributions to be extrapolated from truncated samples.

\subsection{Our Results and Techniques}
\label{sec:results}

  In this work we provide provable extrapolation of
non-parametric density functions from samples, i.e., given samples from the
conditional density on some subset $S$ of the support, we recover the
shape of the density function \emph{outside} of $S$. We consider densities
proportional to $e^{f(x)}$, where $f$ is a sufficiently smooth function. Our 
observation consists of samples from a density proportional to 
$\1_S(x) e^{f(x)}$, where $S$ is a known (via a membership oracle) subset of the
support. For this problem to even be well-posed we need further assumptions on
the density function. Even if we are given the exact conditional density 
$\1_S(x) e^{f(x)}$, it is easy to see that, if $f \notin C_{\infty}$, i.e., 
if $f$ is not infinitely times differentiable everywhere in the whole support,
there is no hope to extrapolate its curve outside of $S$; for a simple example,
if we observe a density proportional to $e^{|x|}$ truncated in $(-\infty,0]$ we
cannot extrapolate this density to $(0,+\infty)$, because we cannot distinguish
whether we are observing truncated samples from $e^{-x}$ or $e^{|x|}$. On the
other hand, if the log-density $f$ is analytic and sufficiently smooth, then the
value of $f$ at every $x$ can be determined only from local information, namely
its derivatives at a single point. This well known property of analytic functions
is quantified by Taylor's remainder theorem.  In this work we build on this and show 
that, even given  {\em samples} from a sufficiently smooth density and
even if these samples are {\em conditioned in a small subset of the support}, we
can still determine the function in the entire support and most importantly this 
can be done in a statistically and computationally efficient way.

  Since it is impossible to extrapolate non-smooth densities, we restrict our attention to smooth
functions $f$.  In particular, we assume that the $r$-th order derivatives of $f$
increase at most exponentially in $r$, i.e., 
$|f^{(r)}(\vec{x})| \leq M^r$ for some $M \in \R_+$ and all $\vec{x}$
(see \Cref{def:bounded and high order smooth}). Notice that similar
assumptions are standard in the interpolation regime of non-parametric density 
estimation \cite{BS91, wu2010exponential}.

  We start our exposition with the single-dimensional version of our 
extrapolation problem in \Cref{sec:one-dimensional}. 
In this setting it is easier to compare with the existing line of work
on non-parametric density estimation both in the vanilla non-truncated and in the
truncated setting. Moreover, in the single-dimensional setting, we are able to show a 
slightly stronger information theoretic result.
We assume that there exists some
unknown log-density function $f$, a known set $S$, and we observe samples from the
distribution $\D(f, S)$, which has density proportional to $\1_S(x) e^{f(x)}$. Our 
goal is to estimate the whole distribution $\D(f)$. For simplicity we assume
that $f$ is supported on $[0, 1]$ and hence $S \subseteq [0, 1]$. 
Our first step
is to consider the \emph{semi-parametric} class of densities $p$ that consists of 
polynomial series that can approximate the unknown non-parametric log-density $f$.
Then we truncate this polynomial series and we only consider densities of the form
$e^{p(x)}$, where $p$ is a degree $k$ polynomial, with large enough $k$; observe 
that these densities belong to an exponential family. 

  Our first result is that the polynomial which maximizes the likelihood with
respect to the \emph{conditional} distribution $\D(f, S)$ (we call this
polynomial the ``MLE polynomial'') approximates the density $e^{f(x)}$
\emph{everywhere} on $[0, 1]$, i.e. the MLE polynomial has small extrapolation 
error.   
\begin{definition}[MLE polynomial]
For some log-density $f: \R \mapsto \R$ denote $\D(f,S)$ the truncated distribution on $S$ 
with density function $\D(f, S;x) = \1_S(x) e^{f(x)}/\int_S e^{f(x)} d x $. 
We define the MLE polynomial $p^\ast$ of degree $k$ with respect to $\D(f,S)$ as
\[
p^\ast = \argmax_{p: \deg(p) \leq k} \E_{x \sim \D(f,S)}[ \log( \D(p, S; x) )] \,.
\]
\end{definition}

The extrapolation guarantee for the MLE polynomial cannot follow from the fact that, for 
example, the Taylor polynomial extrapolates, because the MLE polynomial and the
Taylor polynomial are in principle very different. 
It is not hard to argue that the MLE polynomial of sufficiently large degree has small interpolation 
error: it approximates well the density inside $S$.  In our result we 
show that the same polynomial has small extrapolation error and hence approximates the
density on the entire interval $[0, 1]$.

\begin{informal}[MLE Extrapolation Error, \Cref{thm:one_dimensional_exact_extrapolation}]
  \label{thm:informal_one_dimensional_exact_extrapolation} 
    Let $\D(f, [0, 1])$ be a probability distribution with sufficiently smooth
  log-density $f$ and let $\D(f, S)$ be its conditional distribution on 
  $S \subset [0, 1]$. The MLE w.r.t $\D(f, S)$ polynomial $p^*$ of degree
  $O(\log(1/\eps))$ satisfies $\dtv(\D(f, [0,1]), \D(p^*, [0,1])) \leq \eps$.
\end{informal}

Extending this result to multivariate densities is significantly more
challenging.  The reason is that multivariate polynomial interpolation is
much more intricate and is a subject of active research, see for example the
survey \cite{GS00}.  Instead of trying to characterize the properties of the
exact MLE polynomial we give an alternative method for obtaining multivariate
extrapolation guarantees that does not rely on multivariate polynomial
interpolation. Our approach uses the assumption that the set $S$ has non-trivial
volume, i.e., $\vol(S) \geq \alpha$ for some $\alpha > 0$. Observe that this
assumption is not needed in the single dimensional sample complexity analysis; 
in the multi-dimensional setting we need this assumption even to analyze the 
population model.

\begin{informal}[Multivariate MLE Extrapolation Error,
  \Cref{thm:low_dimensional_information_theoretic}]
  \label{thm:informal_multi_dimensional_exact_extrapolation} 
    Let $\D(f,[0,1]^d)$ be a probability distribution with sufficiently smooth
  log-density $f$ and let $\D(f, S)$ be its conditional distribution on 
  $S \subset [0,1]^d$ with $\vol(S) \geq \alpha$.  The MLE w.r.t $\D(f, S)$ 
  polynomial $p^*$ of degree $O(d^3/\alpha^2 + \log(1/\eps))$ satisfies
  $\dtv(\D(f, [0,1]^d), \D(p^*, [0,1]^d)) \leq \eps$.
\end{informal}

\noindent To prove 
\Cref{thm:informal_multi_dimensional_exact_extrapolation} we use a structural
result that quantifies the distortion of the metric space of exponential families
under conditioning. Given a polynomial $p$ with corresponding density 
$\D(p, [0, 1]^d)$ we consider the conditioning map that maps $\D(p, [0, 1]^d)$ to
the distribution $\D(p, S)$. We show that conditioning distorts the total
variation by a factor less than $(d/\alpha)^{O(k)}$, i.e., distributions that are
close in the image of the conditioning map are also close in the domain and vice
versa.

\begin{informal}[Distortion of Conditioning, \Cref{lem:distance_reduction}]
  \label{lem:informal_distance_reduction}
    Let $p, q$ be polynomials of degree at most $k$. For every 
  $S \subseteq [0, 1]^d$ with $\vol(S) \geq \alpha$ it holds
  \[ \left(d/\alpha\right)^{-O(k)} \leq
     \frac{\dtv(\, \D(p, [0,1]^d),\ \D(q, [0,1]^d)\, )}{\dtv(\D(p, S),\ \D(q, S) )} \leq 
     \left(d/\alpha\right)^{O(k)}. \]
\end{informal}

\noindent Using the above theorem, our strategy for showing  
\Cref{thm:informal_multi_dimensional_exact_extrapolation} is illustrated in 
\Cref{fig:distortion} and is as follows. Our first step is to use Taylor's
remainder theorem to prove that there exists a polynomial $p$, associated with 
$f$, such that both $\dtv(\D(p, S), \D(f, S))$ and 
$\dtv(\D(p, [0, 1]^d), $ $\D(f, [0, 1]^d))$ are very small when $p$ has 
sufficiently large degree. Next, we show that, by optimizing the likelihood function
on $S$ over the space of degree $k$ polynomials, we obtain the MLE polynomial $q$
which achieves very small total variation distance to $f$ on $S$, i.e. 
$\dtv(\D(q, S), \D(f, S))$ is also small. Hence, from the triangle inequality we
have that $\dtv(\D(q, S), \D(p, S))$ is also very small. The next step, which is
the crucial one, is that we can now apply our novel \Cref{lem:informal_distance_reduction} to obtain that 
$\dtv(\D(q, [0, 1]^d), \D(p, [0, 1]^d))$ blows up at most by a factor of
$(d/\alpha)^{O(k)}$. This argument leads to an upper bound on the extrapolation
error ($y$ in \Cref{fig:distortion}). The last key observation is that the 
quantity $\dtv(\D(p, S), \D(f, S))$ decreases faster than $(d/\alpha)^{-O(k)}$ as
the degree $k$ increases and hence we can make the extrapolation error
arbitrarily small by choosing sufficiently high degree.

\begin{figure}
  \centering
  \includegraphics[width=0.5\textwidth]{./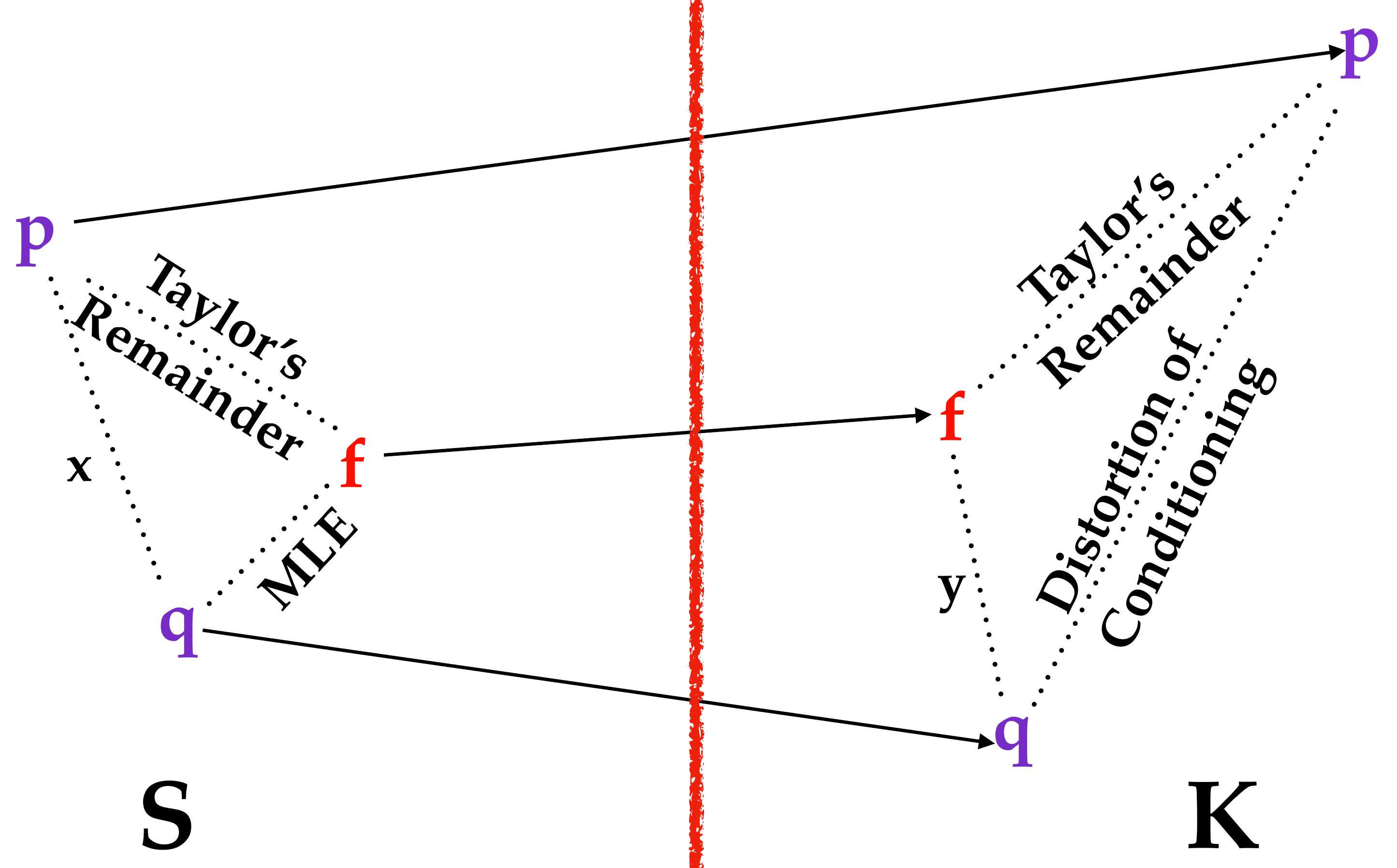}
  \caption{Using \Cref{lem:informal_distance_reduction} to show
  the extrapolation guarantees of MLE.  $K=[0,1]^d$. $p$ is the Taylor
  Polynomial of $f$: from Taylor's remainder theorem we know that, in both $S$
  and $K$, $p$ is very close to $f$. $q$ is the MLE polynomial on $S$: it is
  very close to $f$ in $S$. The distance $x$ is bounded by triangle inequality.
  The distance of $p$ and $q$ in $K$ is upper bounded by $x\ (d/\alpha)^{O(k)}$
  by ~\Cref{lem:informal_distance_reduction}. Finally, $y$ is the
  extrapolation error of the MLE polynomial $q$ on $K$ and is bounded by another
  triangle inequality. Overall, $y \leq \dtv(\D(f, K), \D(p,K)) +
  (d/\alpha)^{O(k)} x \leq \dtv(\D(f, K), \D(p,K)) + (d/\alpha)^{O(k)}
  (\dtv(\D(f, S), \D(p, S)) + \dtv(\D(f, S), \D(q, S)))$.}
  \label{fig:distortion}
\end{figure}

  So far we have argued about the extrapolation error of the population MLE
polynomial, i.e., we assume that we have access to the population distribution
$\D(f, S)$ and that we can maximize the population MLE with no error.  
Our next step is to show how we can incorporate the statistical error from the 
access to only finitely many samples from $\D(f, S)$ and to provide an efficient 
algorithm that computes the MLE polynomial with small enough approximation loss.

\begin{informal}
  [Extrapolation Algorithm: \Cref{thm:low_dimensional_algorithm}]
  Let $\D(f, [0,1]^d)$ be sufficiently smooth.  Let $S \subseteq [0,1]^d$ be
  such that $\vol(S) \geq \alpha$.  There exists an algorithm that draws 
  \[ N = 2^{\wt{O}(d^4/\alpha^2)} \cdot (1/\eps)^{O(d +\log(1/\alpha))} \] 
  samples from $\D(f, S)$, runs in time polynomial in the number of samples, and
  with probability at least $99\%$ finds a polynomial $q$ of degree
  $\wt{O}(d^3/\alpha^2) + O(\log(1/\eps))$ such that 
  $\dtv(\D(f, [0,1]^d), \D(q, [0,1]^d )) \leq \eps$.
\end{informal}
 It is well known that non-parametric density estimation (in the interpolation regime, i.e.~from untruncated samples) under
smoothness assumptions requires samples that depend exponentially in the
dimension, i.e.  the typical rate is $(1/\eps)^{\Theta(d)}$, see for example
\cite{tsy08}, \cite{macdon}.  The usual assumption is that the density has
bounded derivatives, i.e. it belongs to a Sobolev or Besov space.  Our
problem of extrapolating the density function is a strict generalization of
non-parametric density estimation and therefore our sample complexity
naturally scales as $(1/\eps)^{O(d + \log(1/\alpha))}$, where the
$\log(1/\alpha)$ reflects the impact of conditioning on a small volume set
$S$.  In particular, for sets of constant volume, in low dimensions,
we obtain a sample complexity that is polynomial in $1/\eps$.
\section{Definitions and Preliminaries} \label{sec:prelims}

\noindent \textbf{Notation.} Let $K \subseteq \R^d$ and $B \in \R_+$, we define
$L_{\infty}(K, B)$ to be the set of all functions $f : K \to \R$ such that
$\max_{\vec{x} \in K} \abs{f(\vec{x})} \le B$. We may use $L_{\infty}(B)$ instead
of $L_{\infty}([0, 1]^d, B)$. We also define 
$\diam_{p}(K) = \sup_{\vec{x}, \vec{y} \in K} \norm{\vec{x} - \vec{y}}_{p}$
where $\norm{\cdot}_p$ is the usual $\ell_p$-norm of vectors. Let
$\R^{d \times \cdots \text{($k$ times)} \cdots \times d}$ be the set of
$k$-order tensors of dimension $d$, which for simplicity we will denote by
$\R^{d^k}$. For $\vec{\alpha} \in \N^d$, we define the factorial of the
multi-index $\vec{\alpha}$ to be
$\vec{\alpha}! = (\alpha_1 !) \cdots (\alpha_d  !)$. Additionally for any
$\vec{x}, \vec{y}, \vec{z} \in \R^d$ we define
$\vec{z}^{\vec{\alpha}} = z_1^{\alpha_1} \cdots z_d^{\alpha_d}$ and in
particular
$(\vec{x} - \vec{y})^{\vec{\alpha}} = (x_1 - y_1)^{\alpha_1} \cdots (x_d - y_d)^{\alpha_d}$.

\begin{remark} \label{rem:domainGenerality}
    Throughout the paper, for simplicity of exposition, we will consider the
  support $K$ of the densities that we aim to learn to be the
  hypercube $[0, 1]^d$. Our results  hold for arbitrary convex sets with
  the following property $[a, b]^d \subseteq K \subseteq [c, d]^d$. Then all
  our results will be modified by multiplying with a function of
  $R \triangleq \frac{d - c}{b - a}$. 
\end{remark}

\subsection{Multivariate Polynomials} \label{sec:model:polynomials}

  When we use a polynomial to define a probability distribution, as we will see
in Section \ref{sec:model:distributions}, the constant term of the polynomial
has to be determined from the rest of the coefficients so that the resulting
function integrates to $1$. For this reason we can ignore the constant term. 
A polynomial $q : \R^d \to \R$ is a function of the form
\begin{align} \label{eq:polynomialGeneralSimple}
  q(\vec{x}) = \sum_{\vec{\alpha} \in \N^d, 0 < \abs{\vec{\alpha}} \le k} v_{\vec{\alpha}} \vec{x}^{\vec{\alpha}}
\end{align}
\noindent where $v_{\vec{\alpha}} \in \R$. The monomials of degree $\le k$
can be indexed by a multi-index $\vec{\alpha} \in \N^d$ with
$\abs{\vec{\alpha}} \le k$ and any polynomial belongs to the vector space
defined by the monomials as per \cref{eq:polynomialGeneralSimple}.

  To associate the space of polynomials with a Euclidean space we can use any
ordering of monomials, for example, the lexicographic ordering. Using this
ordering we can define the \textit{monomial profile of degree $k$},
$\vec{m}_k : \R^d \to \R^{t_k - 1}$, as
$\left(\vec{m}_k(\vec{x})\right)_{\vec{\alpha}} = \vec{x}^{\vec{\alpha}}$
where $t_k = \binom{d + k}{k}$ is equal to the number of
monomials with $d$ variables and degree at most $k$ and where we  abuse 
notation to index a coordinate in $\R^{t_k-1}$ via a multi-index
$\vec{\alpha} \in \N^d$ with $\abs{\vec{\alpha}} \le k$ and
$\vec{\alpha} \neq \vec{0}$; this can be formally done using the lexicographic
ordering. Therefore the vector space of polynomials is homomorphic to the
vector space $\R^{t_k-1}$ via the following correspondence
\begin{align} \label{eq:polynomialDefinitionVectorSpace}
  \vec{v} \in \R^{t_k - 1} \leftrightarrow \vec{v}^T \vec{m}_k(\vec{x}) \triangleq q_{\vec{v}}(\vec{x}).
\end{align}

We denote by $\mathcal{Q}_{d, k}$ the space of polynomials of degree at most
$k$ with $d$ variables and zero constant term, where we might drop $d$ from the
notation if it is clear from  context.

\subsection{High-order Derivatives and Taylor's Theorem}
\label{sec:model:derivatives}

  In this section we will define the basic concepts about high order
derivatives of a multivariate real-valued function $f : K \to \R$, where
$K \subseteq \R^d$.
\smallskip

  Fix $k \in \N$ and let $\vec{u} \in [d]^k$. We define the order $k$
derivative of $f$ with index $\vec{u} = (u_1, \dots, u_k)$ as
$\dr^k_{\vec{u}} f(\vec{x}) = \frac{\partial^k f}{\partial x_{u_1} \cdots \partial x_{u_k}}(\vec{x})$,
observe that $\dr^k_{\vec{u}} f$ is a function from $K$ to $\R$. The order $k$
derivative of $f$ at $\vec{x} \in S$ is then the tensor
$\dr^k f (\vec x) \in \R^{d^k}$ where the entry of $\dr^k f (\vec x)$ that
corresponds to the index $\vec{u} \in [d]^k$ is
$(\dr^k f (\vec x))_{\vec{u}} = \dr^k_{\vec{u}} f (\vec x)$. Due to symmetry the
$k$-th order derivatives can be indexed with a multi-index
$\vec{\alpha} \triangleq (\alpha_1, \dots, \alpha_d) \in \N^d$, with
$\abs{\vec{\alpha}} = \sum_{i = 1}^d \alpha_i = k$, as follows
$\dr_{\vec{\alpha}} f(\vec{x}) = \frac{\partial^{\abs{\vec{\alpha}}} f}{\partial^{\alpha_1} x_1 \cdots \partial^{\alpha_d} x_d}(\vec{x})$.
Observe that the $k$-order derivative of $f$ is a function
$\dr^k f : K \to \R^{d^k}$.
\medskip

\noindent \textbf{Norm of High-order Derivative.} There are several ways to
define the norm of the tensor and hence the norm of a $k$-order derivative of
a multi-variate function. Here we will define only the norm that we will use
in the rest of the paper as follows
\begin{align} \label{eq:derivativeNormMax}
  \norm{\dr^k f}_{\infty} \triangleq \sup_{\vec{x} \in K} \max_{\vec{u} \in [d]^k} \abs{\dr^k_{\vec{u}} f(\vec{x})} = \sup_{\vec{x} \in K} \max_{\vec{u} \in [d]^k} \abs{\frac{\partial^k f}{\partial x_{u_1} \cdots \partial x_{u_k}}(\vec{x})}.
\end{align}
\noindent Observe that this definition depends on $K$, but for ease of notation we
eliminate $K$ from the notation and we make sure that this set will be clear from
the context. For most part of the paper $K$ is the box $[0, 1]^d$. We next define
the $k$-order Taylor approximation of a multi-variate function.

\begin{definition}(Taylor Approximation) \label{def:TaylorApproximation}
    Let $f : K \to \R$ be a $(k + 1)$-times differentiable function on
  the convex set $K \subseteq \R^d$. Then we define
  $\bar{f}_k(\cdot; \vec{x})$ to be the $k$-order Taylor approximation of $f$
  around $\vec{x}$ as follows
  $\bar{f}_k(\vec{y}; \vec{x}) = \sum_{\vec{\alpha} \in \N^d, \abs{\vec{\alpha}} \le k} \frac{\dr_{\vec{\alpha}} f(\vec{x})}{\vec{\alpha}!} (\vec{y} - \vec{x})^{\vec{\alpha}}$.
\end{definition}

\noindent We are now ready to state the main application of Taylor's Theorem. 
For the proof of this theorem together with the statement of the multi-dimensional
Taylor's Theorem we refer to \Cref{sec:app:taylor}.

\begin{theorem}[Corollary of Taylor's Theorem] \label{thm:TaylorTheoremApp}
    Let $K \subseteq \R^d$ and $f : K \to \R$ be a $(k + 1)$-times
  differentiable function such that $\diam_{\infty}(K) \le R$ and
  $\norm{\dr^{k + 1} f}_{\infty} \le W$, then for any
  $\vec{x}, \vec{y} \in K$ it holds that
  $
    \abs{f(\vec{y}) - \bar{f}_k(\vec{y}; \vec{x})} \le \left(15 d/k \right)^{k + 1} \cdot R^{k + 1} \cdot W.
  $
\end{theorem}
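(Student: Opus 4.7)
The plan is to derive the bound directly from the Lagrange form of the multivariate Taylor remainder. Since the Taylor approximation $\bar{f}_k$ in \Cref{def:TaylorApproximation} is defined on a convex $K$, I treat $K$ as convex so that the segment between any $\vec{x}, \vec{y} \in K$ stays in $K$. The multivariate Lagrange form — which I expect is derived in \Cref{sec:app:taylor} by applying the one-dimensional version to the function $t \mapsto f(\vec{x} + t(\vec{y}-\vec{x}))$ and reorganizing via the chain rule and multinomial expansion — gives
\[
f(\vec{y}) - \bar{f}_k(\vec{y}; \vec{x}) \;=\; \sum_{\vec{\alpha} \in \N^d,\, |\vec{\alpha}| = k+1} \frac{\dr_{\vec{\alpha}} f(\vec{\xi})}{\vec{\alpha}!}\, (\vec{y} - \vec{x})^{\vec{\alpha}}
\]
for some $\vec{\xi}$ on the segment $[\vec{x}, \vec{y}] \subseteq K$. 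I would invoke this as a black box.

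Next I would bound each factor in a summand. Any multi-index $\vec{\alpha}$ with $|\vec{\alpha}| = k+1$ encodes a particular entry of the tensor $\dr^{k+1} f(\vec{\xi}) \in \R^{d^{k+1}}$, so by the $\infty$-norm definition in \cref{eq:derivativeNormMax}, $|\dr_{\vec{\alpha}} f(\vec{\xi})| \le W$. For the monomial factor, each coordinate satisfies $|y_i - x_i| \le R$ because $\diam_\infty(K) \le R$, and the total degree of $(\vec{y}-\vec{x})^{\vec{\alpha}}$ is $k+1$, so $|(\vec{y}-\vec{x})^{\vec{\alpha}}| \le R^{k+1}$. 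Combining these,
\[
|f(\vec{y}) - \bar{f}_k(\vec{y}; \vec{x})| \;\le\; W\, R^{k+1} \sum_{|\vec{\alpha}| = k+1} \frac{1}{\vec{\alpha}!}.
\]

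To finish, I would identify the combinatorial sum via the multinomial theorem: $d^{k+1} = (1+\cdots+1)^{k+1} = \sum_{|\vec{\alpha}|=k+1} \frac{(k+1)!}{\vec{\alpha}!}$, so $\sum_{|\vec{\alpha}|=k+1} \frac{1}{\vec{\alpha}!} = \frac{d^{k+1}}{(k+1)!}$. Applying Stirling's bound $(k+1)! \ge ((k+1)/e)^{k+1}$ yields $d^{k+1}/(k+1)! \le (ed/(k+1))^{k+1} \le (15d/k)^{k+1}$, since $e < 15$ and $k+1 \ge k$, which gives the claimed inequality. There is no genuine analytic obstacle here beyond careful bookkeeping between the tensor and multi-index indexings of high-order derivatives; once the multivariate Lagrange remainder is in hand, the rest is the standard multinomial-plus-Stirling computation.
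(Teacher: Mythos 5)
Your proposal is correct, and its overall skeleton matches the paper's: reduce the error to $W\,R^{k+1}\sum_{|\vec{\alpha}|=k+1}\frac{1}{\vec{\alpha}!}$ by bounding each derivative entry by $W$ and each coordinate difference by $R$, then bound the combinatorial sum by $(15d/k)^{k+1}$. The two places where you diverge are worth noting. First, you invoke the Lagrange (mean-value) form of the multivariate remainder, whereas the paper's \Cref{thm:TaylorTheorem} is stated in integral form with coefficients $R_{\vec{\beta}}$; both yield the same termwise bound $W/\vec{\beta}!$ (in the integral form via $\int_0^1(1-t)^{k}\,\d t = 1/(k+1)$), and both implicitly need the segment $[\vec{x},\vec{y}]$ to lie in $K$, i.e.\ the convexity you flag, which is also what \Cref{def:TaylorApproximation} assumes. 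Second, and more substantively, your treatment of the sum is cleaner than the paper's: you identify it exactly as $\sum_{|\vec{\alpha}|=k+1}\frac{1}{\vec{\alpha}!}=\frac{d^{k+1}}{(k+1)!}$ via the multinomial theorem and finish with $(k+1)!\ge((k+1)/e)^{k+1}$, giving $(ed/(k+1))^{k+1}\le(15d/k)^{k+1}$ with room to spare. The paper instead bounds the sum by the number of multi-indices $\binom{d+k+1}{k+1}$ times the reciprocal of the minimal factorial, proving via an exchange argument that the minimizing $\vec{\beta}$ is as balanced as possible, and then applies Stirling; your exact identity bypasses that minimization entirely and yields a slightly sharper constant ($e$ rather than roughly $2e^2$), so it is arguably the preferable route. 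No gap in your argument.
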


\subsection{Bounded and High-order Smooth Functions} \label{sec:model:boundedHighOrderSmooth}

  In this section we define the set of functions that our statistical Taylor
theorem applies. It is also the domain of function with respect to which we are
solving the non-parametric truncated density estimation problem. This set of
functions is very similar to the functions consider for interpolation of
probability densities from exponential families \cite{BS91}. We note that in
this paper our goal is to solve a much more difficult problem since our goal is
to do extrapolation instead of interpolation. We call the set of function that
we consider \textit{bounded and high-order smooth functions}.

\begin{definition}[\emph{Bounded and High-order Smooth Functions}] \label{def:bounded and high order smooth}
    Let $K = [0, 1]^d$, we define the set $\calL(B, M)$ of functions
  $f : K \to \R$ for which the following conditions are satisfied.
  \begin{enumerate}
    \item[$\blacktriangleright$] \emph{(Bounded Value)} It holds that
    $\max_{\vec{x} \in K} \abs{f(\vec{x})} \le B$.
    \item[$\blacktriangleright$] \emph{(High-Order Smoothness)} For any
    natural number $k$ with $k \ge k_0$, $f$ is $k$-times continuously differentiable and it holds that
    $\norm{\dr^k f}_{\infty} \le M^k$.
  \end{enumerate}
  We note that the definition of the class $\calL$ depends on $k_0$ as well but
  for ease of notation we don't keep track of this dependence and we treat $k_0$
  as a constant throughout the paper.
\end{definition}

\subsection{Probability Distributions} \label{sec:model:distributions}

  We are now ready to define probability distributions with a given log-density
function.

\begin{definition}
    Let $S \subseteq \R^d$ and let $f : S \to \R$ such that
  $\int_S \exp(f(\vec{x})) d \vec{x} < \infty$. We define the distribution
  $\D(f, S)$ with log-density $f$ supported on $S$ to be the distribution with
  density
  \begin{align*}
    \D(f, S; \bx) = \frac{\1_S(\vec x)\ e^{f(\vec x)}}{\int_S e^{f(\vec x)}\ \d x} = \1_S(\vec x)\ \exp(f(\vec x) - \psi(f,S))\,, ~~~~\text{where}~~~ \psi(f, S) = \log \int_S e^{f(x)} \d \bx\,.
  \end{align*}
  \noindent If $f$ is equal to a $k$ degree polynomial
  $q_{\vec{v}} \in \mathcal{Q}_k$ with coefficients $\vec{v} \in \R^{t_k - 1}$, where $t_k = \binom{d + k}{k}$,
  then instead of $\D(q_{\vec{v}}, S)$ we write
  $\D(\vec{v}, S)$. Finally, let $T \subseteq S$, we define 
  $\D(f, S; T) = \int_T \D(f, S; \bx) \d \bx$.
\end{definition}

  Our main focus in this paper is on probability distributions
$\D(f, [0, 1]^d)$ where $f \in \calL(B, M)$ for some known parameters $B, M$.
More specifically our main goal is to approximation the density of
$\D(f, [0, 1]^d)$ given samples from $\D(f, S)$, where $S$ is a measurable
subset of $[0, 1]^d$.

\section{Single Dimensional Densities} \label{sec:one-dimensional}

  In this section we show our Statistical Taylor Theorem for single-dimensional
densities. We keep this analysis separate from our main multi-dimensional
theorem for several reasons. First, there exists a great body of work on 
single-dimensional non-parametric estimation problems in the vanilla setting and
more specifically in truncated estimation problems this is the only setting that
has been considered so far. Therefore, it is easier to compare the estimators and
results that we get with the existing results. In fact this is the strategy that
is followed in other multi-dimensional non-parametric estimation problems, e.g., 
see \cite{wu2010exponential}. Another reason is that in the single dimensional
setting we are able to obtain a slightly stronger information theoretic result
using more elementary tools, although the analysis of our efficient algorithmic
procedure is the same as in multiple dimensions. Finally, the single dimensional
setting serves as a nice example where the difference between interpolation and
extrapolation is more clear.

  In this section our goal is to estimate the density of the distribution 
$\D(f, [0, 1])$ using only samples from $\D(f, S)$, where the log-density $f$ is a
bounded and high-order smooth function, i.e. $f \in \calL(B, M)$, and $S$ is a
measurable subset of $[0, 1]$. As a first step we need to understand what is a 
sufficient degree for a polynomial to well-approximate
(\Cref{sec:one-dimensional:sufficientDegree}) this is the part that is 
different compared to the multi-dimensional case that we present in 
\Cref{sec:multivariate}. In  \Cref{sec:one-dimensional:optimizationError} we state the application of our
general multi-dimensional statistical and computational result to the single
dimensional case where the assumptions and guarantees have a simpler form.

\subsection{Identifying the Sufficient Degree} \label{sec:one-dimensional:sufficientDegree}

  In this section we assume population access to $\D(f, S)$ and our goal is to
identify a polynomial $q$ such that $\D(q, [0, 1])$ approximates $\D(f, [0, 1])$.
In particular, we want to answer the question: if $q$ minimizes the KL-divergence
between $\D(q, S)$ and $\D(f, S)$, what can we say about the total variation 
distance between $\D(q, [0, 1])$ and $\D(f,[0, 1])$? Moreover, how does this
depend on the degree of $q$? As the degree of $q$ grows, it certainly allows
$\D(q, S)$ to come closer to $\D(f, S)$. The natural thing to expect hence is that
the same is true of $\D(q, [0, 1])$, coming closer to $\D(f, [0,1])$.
Unfortunately, this is not necessarily the case, because it could be that, as the
degree of the polynomial increases, the approximant $\D(q, S)$ overfits to 
$\D(f, S)$, so extrapolating to $[0, 1]$ fails to give a good approximation to
$\D(f, [0, 1])$. This is the main technical difficulty of this section. 

  In the next theorem, we show is that if the function is high-order smooth, then
there is some threshold beyond which we get better approximations using higher
degrees, i.e. overfitting does not happen for any degree above some threshold. We
illustrate this behavior in \Cref{exm:running1D}.

\begin{theorem}[MLE Polynomial Extrapolation Error]
  \label{thm:one_dimensional_exact_extrapolation}
    Let $I = [0, 1] \subseteq \R$ and $f: I \mapsto \R$ be a function that is
  $(k+1)$-times continuously differentiable on $I$, with
  $\abs{f^{(k+1)}(x)} \leq M^{k+1}$ for all $x \in I$. Let $S \subseteq I$ be a
  measurable set such that $\vol(S) > 0$, and $p$ be a polynomial of degree at
  most $k$ defined as
  \( p = \argmin_{q \in \mathcal{Q}_k} \dkl(\D(f,S) \| \D(q, S))\,. \)
  Then, it holds that
  $\dkl( \D(f, I) \| \D(p, I))\leq e^{W_k} W_k^2$ where $W_k = \frac{M^{k+1}}{(k+1)!} \,. $
\end{theorem}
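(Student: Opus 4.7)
\smallskip
\noindent\textbf{Proof plan.}
The plan is to use the degree-$k$ Taylor polynomial of $f$ as a polynomial proxy and exploit the MLE optimality of $p$ against this proxy. First, I would pick any $x_0\in S$ and let $T_k$ denote the degree-$k$ Taylor polynomial of $f$ at $x_0$. By Taylor's remainder theorem and the hypothesis $|f^{(k+1)}|\le M^{k+1}$, $|f(x)-T_k(x)|\le W_k$ holds uniformly for every $x\in I=[0,1]$, not only on $S$. Crucially, $T_k\in\mathcal{Q}_k$, so it is an admissible competitor in the MLE formulation defining $p$, and serves as a bridge between the unknown $f$ and the MLE polynomial.

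Second, I would establish a general ``bounded difference implies small KL'' lemma: whenever $|g-h|\le W$ on a measurable $T\subseteq I$, the identity
\[
\dkl(\D(g,T)\,\|\,\D(h,T))=\log\E_{\D(g,T)}[e^{h-g}]-\E_{\D(g,T)}[h-g],
\]
together with the pointwise inequality $e^u\le 1+u+\tfrac{u^2}{2}e^{|u|}$ and $\log(1+y)\le y$, yields $\dkl(\D(g,T)\,\|\,\D(h,T))\le \tfrac{W^2}{2}e^W$. Applying this to $g=f$, $h=T_k$ on $T=S$ and $T=I$ bounds both $\dkl(\D(f,S)\,\|\,\D(T_k,S))$ and $\dkl(\D(f,I)\,\|\,\D(T_k,I))$ by $\tfrac{W_k^2}{2}e^{W_k}$. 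Combining the $S$-bound with the definition of $p$ as the KL-minimizer on $S$ over $\mathcal{Q}_k$ immediately yields $\dkl(\D(f,S)\,\|\,\D(p,S))\le \tfrac{W_k^2}{2}e^{W_k}$.

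The hard step will be to transfer this $S$-bound for $p$ to a bound on $\dkl(\D(f,I)\,\|\,\D(p,I))$. Using the identity $\psi(g,I)=\psi(g,S)-\log\Pr_{\D(g,I)}[S]$, I would expand
\[
\dkl(\D(f,I)\,\|\,\D(p,I))=\dkl(\D(f,S)\,\|\,\D(p,S))+\bigl(\E_{\D(f,I)}-\E_{\D(f,S)}\bigr)[f-p]+\log\frac{\Pr_{\D(f,I)}[S]}{\Pr_{\D(p,I)}[S]},
\]
and attempt to re-express the two correction terms as integrals involving only $f-T_k$ (which is uniformly bounded by $W_k$ on $I$), so that the a priori uncontrolled polynomial piece $p-T_k$ on $I\setminus S$ cancels between the expectation-mismatch term and the normalizing-ratio term. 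The target $e^{W_k}W_k^2$ equals exactly the sum of the Taylor KL on $I$ and the MLE KL on $S$ from the previous paragraph, consistent with a two-piece decomposition after cancellation. The main obstacle is making this cancellation precise without any factor depending on $\vol(S)$; this is exactly what makes the single-dimensional result ``slightly stronger'' than the multi-dimensional Theorem~\ref{thm:informal_multi_dimensional_exact_extrapolation}, whose analogous step loses a conditioning-distortion factor of $(1/\alpha)^{O(k)}$.
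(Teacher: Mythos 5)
Your first half is sound: taking the Taylor polynomial $T_k$ at a point of $S$ as a competitor, bounding $\dkl(\D(f,T)\,\|\,\D(T_k,T))\le \tfrac{W_k^2}{2}e^{W_k}$ for $T\in\{S,I\}$ via a quadratic KL bound (this is essentially \Cref{lem:kl_quadratic_upper}), and concluding $\dkl(\D(f,S)\,\|\,\D(p,S))\le\tfrac{W_k^2}{2}e^{W_k}$ from optimality of $p$ are all correct, and your three-term decomposition identity is also correct. The genuine gap is the transfer step, and the cancellation you hope for cannot happen: both correction terms, $(\E_{\D(f,I)}-\E_{\D(f,S)})[f-p]$ and $\log\bigl(\Pr_{\D(f,I)}[S]/\Pr_{\D(p,I)}[S]\bigr)$, genuinely depend on the values of $p$ on $I\setminus S$, and nothing in your argument controls them. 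More fundamentally, after your second step the only information you retain about $p$ is the \emph{value} of its KL divergence on $S$, and that information alone cannot yield any bound on $I$ that is free of $\vol(S)$: a degree-$k$ polynomial that nearly minimizes the KL on a set $S$ of measure $\alpha$ can still deviate from $f$ by a Remez-type factor of order $(1/\alpha)^{k}$ on $I\setminus S$. This is exactly why the approximate-minimizer version (\Cref{thm:one_dimensional_aproximate_extrapolation}) demands an optimization gap as small as $2^{-O(k\log(1/\alpha)+B)}$ and pays a distortion factor through \Cref{lem:distance_reduction}. To get the clean $e^{W_k}W_k^2$ bound you must use the \emph{exact} optimality of $p$ structurally, not just compare its objective value to that of $T_k$.

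The paper's proof does precisely this. Writing $\phi_f=f-\psi(f,S)$ and $\phi_p=p-\psi(p,S)$, it first shows (\Cref{clm:information-projection-roots}) that $\phi_f-\phi_p$ must have at least $k+1$ roots in $S$, counted with multiplicity: otherwise one can add a small multiple of a sign-matching polynomial of degree $\le k$ to $\phi_p$ and, by the Kullback--Leibler monotonicity lemma (\Cref{lem:kl-monotonicity}), strictly decrease $\dkl(\D(f,S)\,\|\,\cdot)$, contradicting minimality of $p$. Then Hermite interpolation error (\Cref{lem:hermite-interpolation-error}) upgrades agreement at these $k+1$ nodes inside $S$ to the uniform bound $|\phi_f(x)-\phi_p(x)|\le W_k$ on \emph{all} of $I$, and \Cref{lem:kl_quadratic_upper} converts this pointwise bound into $\dkl(\D(f,I)\,\|\,\D(p,I))\le e^{W_k}W_k^2$. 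So the missing ingredient in your plan is a variational/first-order use of exact optimality (root counting or, equivalently, the moment-matching conditions of the information projection); without it, the extrapolation step cannot be closed along the route you describe.
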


\noindent The proof of \Cref{thm:one_dimensional_exact_extrapolation} can
be found in  \Cref{sec:proof:one_dimensional_exact_extrapolation}. 
We also note that we can prove a more general version of  \Cref{thm:one_dimensional_exact_extrapolation} where $I$ is any interval $[a, b]$.
The difference in the guarantees is that the term $W_k$ will be multiplied by
$R^{k + 1}$ where $R \triangleq b - a$.
\smallskip

\noindent To convey the motivation for our theorem and illustrate its
guarantees, we use the following example.

\begin{example} \label{exm:running1D}
    Let $f(x) = \sin(10 \cdot x)$ and $S = [0, 1/2]$. Our goal is to
  estimate the probability distribution $\D(f, [0, 1])$ using only samples from
  $\D(f, S)$. The guarantees of 
  \Cref{thm:one_dimensional_exact_extrapolation} are illustrated in  
  \Cref{fig:example1D:degreeError} where we can see the density of the 
  distributions $\D(f, [0, 1])$, $\D(f, S)$, $\D(q, S)$ and $\D(q, [0, 1])$ for
  various values of the degree of $q$, where $q$ is always chosen to minimize the
  KL-divergence between $\D(f, S)$ and $\D(q, S)$, i.e., using no further
  information about $\D(f, [0, 1])$. As we see, $\D(q, S)$ approximates
  $\D(f, S)$ very well for all the presented degrees, with a marginal
  improvement as the degree of $q$ is increased.

An important observation is that the
  approximation error between $\D(f, [0, 1])$ and $\D(q, [0, 1])$ is not
  monotone in any range of degrees of $q$. In particular, when the degree of $q$ is $10$
  then $\D(q, [0, 1])$ is reasonably close to $\D(f, [0, 1])$ while when the
  (degree of $q$) $= 12$ then $\D(q, [0, 1])$ is way off. This suggests that overfitting occurs for degree $ = 12$. The point of 
  \Cref{thm:one_dimensional_exact_extrapolation} is that it guarantees that, for degree
  greater than a threshold, overfitting cannot happen and $\D(q, [0, 1])$ will
  always be a good approximation to $\D(f, [0, 1])$.

  \begin{figure}[t]
    \centering
    \begin{subfigure}[b]{0.31\textwidth}
      \includegraphics[width=\textwidth]{./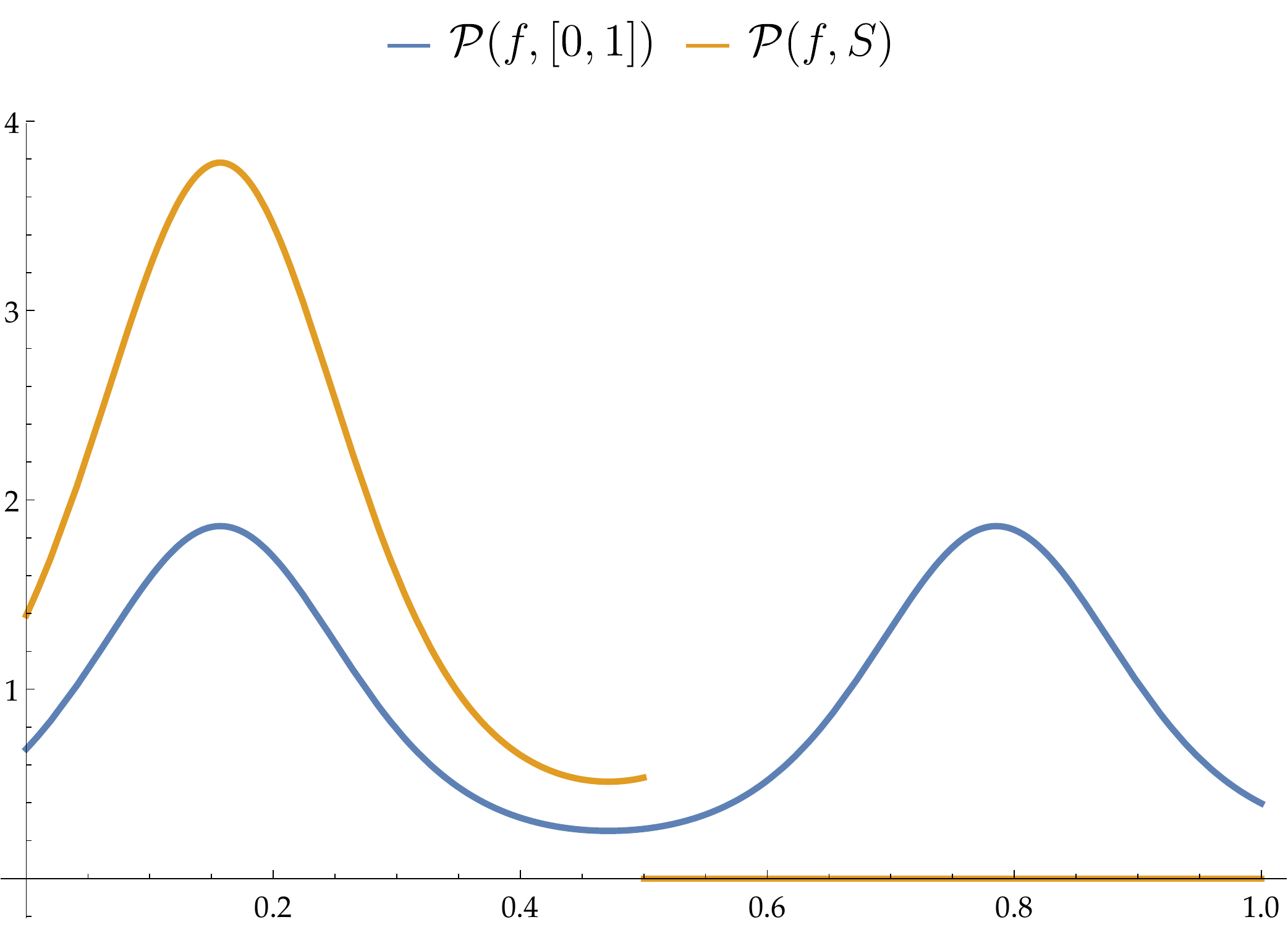}
      \caption{The densities of $\D(f, [0, 1])$ and $\D(f, S)$.}
    \end{subfigure} ~
    \begin{subfigure}[b]{0.31\textwidth}
      \includegraphics[width=\textwidth]{./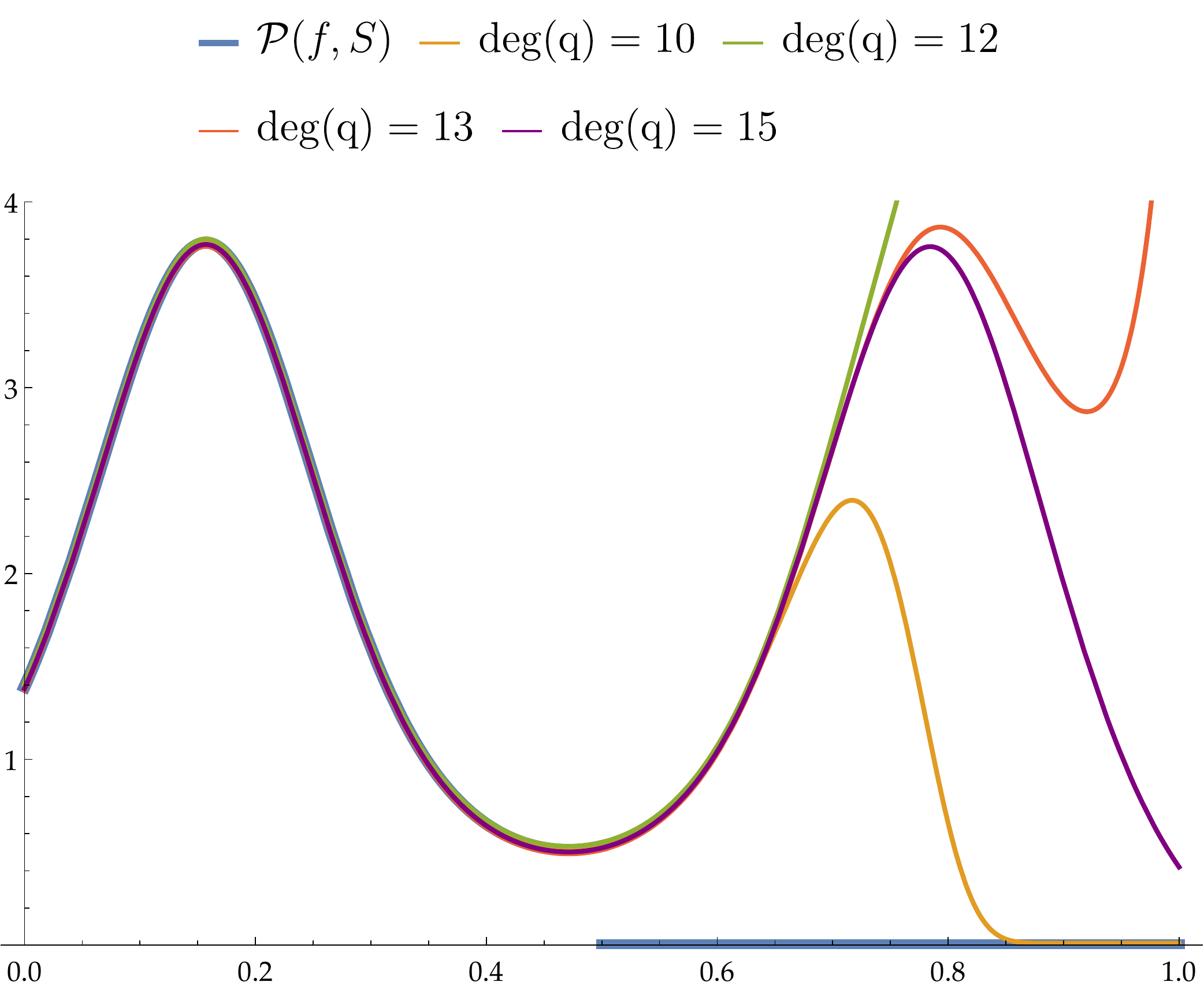}
      \caption{$\D(f, S)$ and normalized on $S$ functions $e^{q(\cdot)}$ for
      different $\mathrm{deg}(q)$.}
    \end{subfigure} ~
    \begin{subfigure}[b]{0.31\textwidth}
      \includegraphics[width=\textwidth]{./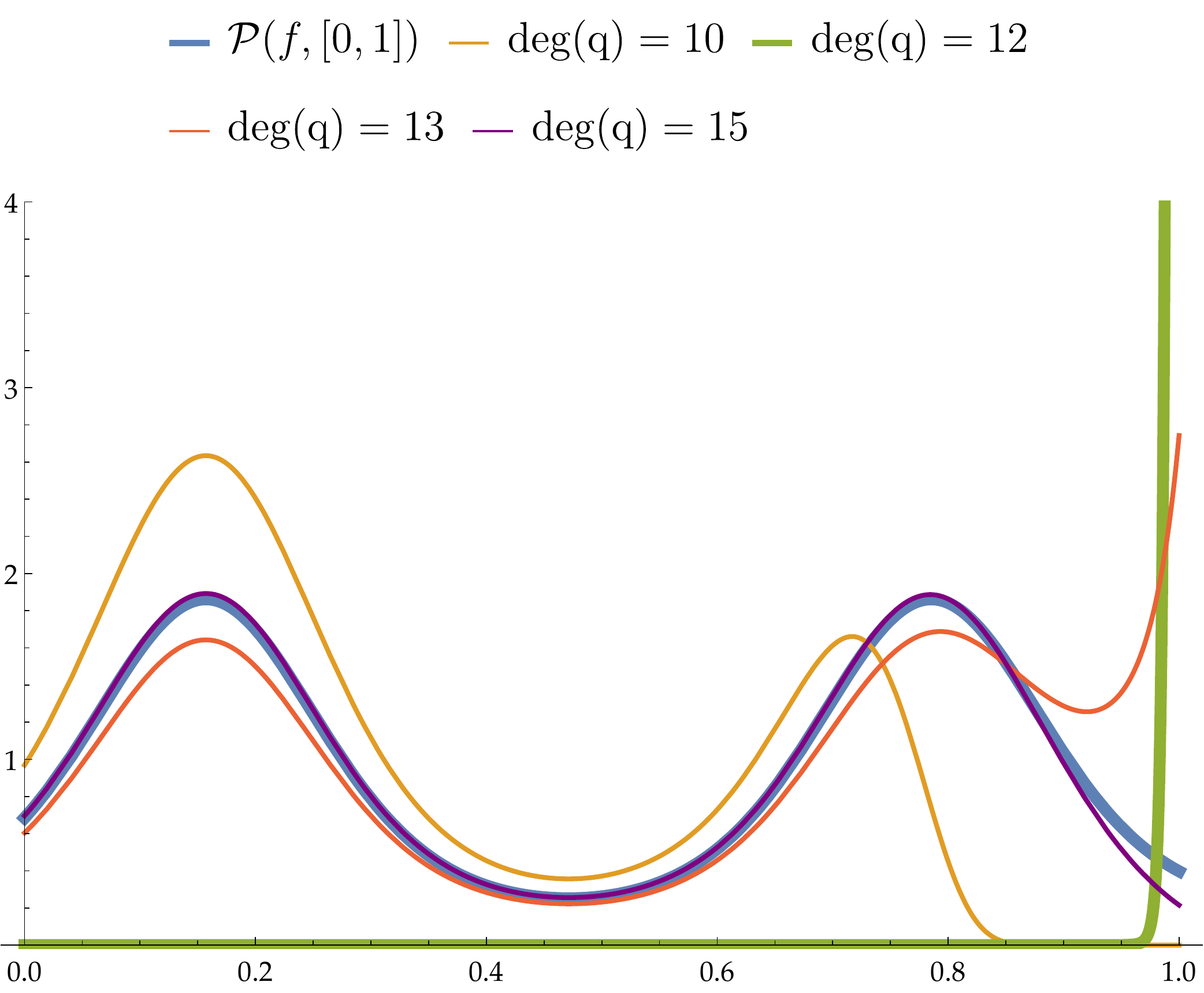}
      \caption{The densities of $\D(f, [0, 1])$ and $\D(q, [0, 1])$ for different $\mathrm{deg}(q)$.}
    \end{subfigure}

    \caption{In figure (a) we can see the probability density functions of the
    distributions $\D(f, [0, 1])$ and $\D(f, S)$. In figure (b) we have the
    density of $\D(f, S)$ together with the functions $\exp(q(x))$ for various
    degrees of $q$ normalized so that the integral on $S$ is 1. As we can see
    all the degrees approximate very well the conditional density but they have
    completely different behavior outside $S$. In figure (c) we can see the
    densities $\D(f, [0, 1])$ and $\D(q, [0, 1])$ for various degrees of $q$.
    The difference between (b) and (c) is that in (c) the functions are
    normalized so that their integral over $[0, 1]$ is equal to $1$ whereas in
    (b) the integral over $S$ is equal to $1$.}
    \label{fig:example1D:degreeError}
  \end{figure}
\end{example}

\subsection{Handling the Optimization Error} \label{sec:one-dimensional:optimizationError}

Our next theorem handles the approximation error that is
introduced, due to access to finite samples.

\begin{theorem}[Approximate MLE Polynomial Extrapolation Error]
  \label{thm:one_dimensional_aproximate_extrapolation}
    Let $I = [0,1]$, $f \in \LL(B, M)$ be a function supported on $I$,
  $S \subseteq I$ be a measurable set such that $\vol(S) \geq \alpha$,
  $D_k$ be the convex set
  $D_k =
  \{
    p\in\mathcal{Q}_k: p \in L_\infty(I, 3 B)
  \}\, 
  $, where $k = \Omega(M + \log(1/\eps))$, and let also
  $r^*_k = \min_{p \in D_k} \dkl(\D(f,S) \| \D(p, S))$. If some $q$ with
  $q \in D_k$ satisfies
  \begin{align} \label{eq:singleDimensional:approximateOptimizationRequirement}
    \dkl(\D(f, S) \| \D(q, S)) \leq r_k^* + 2^{- O( k \log(1/\alpha) + B)},
  \end{align}
  then it holds that $\dtv(\D(f, I), \D(q, I)) \leq \eps$.
\end{theorem}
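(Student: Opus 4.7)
The strategy is to reduce to the population-level extrapolation guarantee of \Cref{thm:one_dimensional_exact_extrapolation} by means of the Distortion of Conditioning lemma (\Cref{lem:distance_reduction}), converting the assumed near-optimality of $q$ in KL on $S$ into a good TV approximation on $I$.

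First I fix a benign reference polynomial. Since $f \in \calL(B,M)$ and $k = \Omega(M + \log(1/\eps))$, \Cref{thm:TaylorTheoremApp} supplies a Taylor polynomial $p_T$ of degree $k$ with
\[
  \sup_{x \in I}\bigl|f(x)-p_T(x)\bigr|\;\le\;W_k\;\triangleq\;\tfrac{M^{k+1}}{(k+1)!}\,,
\]
which by the choice of $k$ can be driven below any prescribed small polynomial in $\eps$. In particular, $|p_T(x)|\le |f(x)|+W_k\le 2B$, so $p_T\in D_k$. The same pointwise bound controls both the log-partition and the density gap, yielding $\dkl(\D(f,S)\,\|\,\D(p_T,S))\le e^{W_k}W_k^2$ and $\dtv(\D(f,I),\D(p_T,I))\le e^{W_k}W_k$ by the same bookkeeping used in \Cref{thm:one_dimensional_exact_extrapolation}.

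Second, I exploit the near-optimality hypothesis. Because $p_T\in D_k$, $r_k^\ast\le \dkl(\D(f,S)\,\|\,\D(p_T,S))=O(W_k^2)$, so the hypothesis on $q$ implies
\[
  \dkl\bigl(\D(f,S)\,\big\|\,\D(q,S)\bigr)\;\le\;O(W_k^2)+2^{-O(k\log(1/\alpha)+B)}.
\]
Applying Pinsker to both $(f,p_T)$ and $(f,q)$ on $S$ and the triangle inequality for TV gives
\[
  \dtv\bigl(\D(p_T,S),\D(q,S)\bigr)\;\le\;O(W_k)+2^{-\Omega(k\log(1/\alpha)+B)}.
\]

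Third, and this is the crucial step, I apply \Cref{lem:distance_reduction} to the two degree-$k$ polynomials $p_T,q\in D_k$ (hence both in $L_\infty(I,3B)$, which is exactly the norm bound the lemma is calibrated to) to conclude
\[
  \dtv\bigl(\D(p_T,I),\D(q,I)\bigr)\;\le\;(1/\alpha)^{O(k)}\cdot 2^{O(B)}\cdot\dtv\bigl(\D(p_T,S),\D(q,S)\bigr).
\]
The exponent $O(k\log(1/\alpha)+B)$ inside the slack in \cref{eq:singleDimensional:approximateOptimizationRequirement} was chosen precisely to dominate the logarithm of the distortion factor $(1/\alpha)^{O(k)}2^{O(B)}$, so after multiplication this term is $\le \eps/2$. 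Together with $O(W_k)\le \eps/2$ from the earlier piece and a final triangle inequality on $I$,
\[
  \dtv\bigl(\D(f,I),\D(q,I)\bigr)\;\le\;\dtv\bigl(\D(f,I),\D(p_T,I)\bigr)+\dtv\bigl(\D(p_T,I),\D(q,I)\bigr)\;\le\;\eps,
\]
which is the desired conclusion.

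The main obstacle is the quantitative accounting in the third step: the constants inside the exponents must be chosen carefully so that the distortion blow-up $(1/\alpha)^{O(k)}2^{O(B)}$ is strictly dominated by the $2^{-O(k\log(1/\alpha)+B)}$ slack, and one must verify that $p_T$ satisfies the $L_\infty(I,3B)$ bound required for \Cref{lem:distance_reduction} (the headroom factor $3$, rather than $2$, in the definition of $D_k$ is what makes this automatic). A minor secondary issue is the use of Pinsker in the ``wrong'' direction between $\D(p_T,S)$ and $\D(q,S)$, which I sidestep by always passing through $\D(f,S)$ as a pivot, since it is only from $\D(f,S)$ that the two KL bounds are available.
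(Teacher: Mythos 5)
Your plan breaks down at the third step, and the breakdown is exactly what separates the stated one-dimensional theorem from its multivariate counterpart. After you pass to the Taylor pivot $p_T$ you obtain $\dtv(\D(p_T,S),\D(q,S)) \leq O(W_k) + 2^{-\Omega(k\log(1/\alpha)+B)}$, and then \Cref{lem:distance_reduction} multiplies \emph{both} summands by the distortion factor $(1/\alpha)^{O(k)}2^{O(B)}$. You only account for the product with the slack term; the product $(1/\alpha)^{O(k)}\,2^{O(B)}\,W_k$ is silently identified with the undistorted ``earlier piece'' $O(W_k)\le\eps/2$, which it is not. Since $W_k = M^{k+1}/(k+1)! \approx (eM/k)^{k}$, making $(1/\alpha)^{O(k)}2^{O(B)}W_k \le \eps$ forces $k$ to grow like $\mathrm{poly}(1/\alpha)\cdot M + B + \log(1/\eps)$, whereas the theorem only assumes $k=\Omega(M+\log(1/\eps))$, with no dependence on $\alpha$ or $B$. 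In effect your argument is the multi-dimensional proof (Taylor pivot plus Distortion of Conditioning, as in \Cref{thm:low_dimensional_information_theoretic}) specialized to $d=1$, and it proves a strictly weaker statement than the one claimed here; the paper explicitly singles out the 1-D case because a stronger, $\alpha$-free degree bound is available.

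The paper avoids this by pivoting through the \emph{exact} population minimizer $q^* = \argmin_{p\in\mathcal{Q}_k}\dkl(\D(f,S)\|\D(p,S))$ rather than the Taylor polynomial. First, the pointwise Hermite-interpolation bound \cref{eq:kl_minimizer_pointwise} (from the proof of \Cref{thm:one_dimensional_exact_extrapolation}) shows $q^*\in D_k$ and, crucially, gives $\dtv(\D(q^*,I),\D(f,I)) \le e^{W_k/2}W_k$ directly on all of $I$, so the $W_k$-sized error never meets the distortion factor. Second, the Pythagorean identity of the information projection,
\begin{align*}
\dkl(\D(f,S)\|\D(q,S)) = \dkl(\D(f,S)\|\D(q^*,S)) + \dkl(\D(q^*,S)\|\D(q,S)),
\end{align*}
converts the hypothesis \cref{eq:singleDimensional:approximateOptimizationRequirement} into $\dkl(\D(q^*,S)\|\D(q,S)) \le 2^{-O(k\log(1/\alpha)+B)}$, so that only the optimality gap (and not $W_k$) is fed through Pinsker and \Cref{lem:distance_reduction} applied to the pair $(q^*,q)$. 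If you want to salvage your Taylor-pivot route, you must either strengthen the degree hypothesis to include the $\mathrm{poly}(1/\alpha)$ and $B$ dependence, or replace the pivot by $q^*$ and add the two ingredients above.
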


  The proof of \Cref{thm:one_dimensional_aproximate_extrapolation} can be
found in  \Cref{sec:proof:one_dimensional_aproximate_extrapolation}.
Next we argue that we can efficiently compute a polynomial $q$ that satisfies
\Cref{eq:singleDimensional:approximateOptimizationRequirement}. Unfortunately, 
the proof of this step is not simplified in the 1-D case and we need to invoke
our general multi-dimensional theorem with the assumptions and guarantees
simplified due to the single dimensionality of the distribution. For more details
about the specific algorithm that we use we refer to \Cref{sec:sgd}. This
leads to the following theorem.

\begin{theorem}[1-D Statistical Taylor Theorem]
  \label{thm:1DStatisticalTaylor}
    Let $I = [0,1]$, $f \in \LL(B, M)$ be a function supported on $I$, and
  $S \subseteq I$ be a (measurable) subset of $I$ such that
  $\vol(S) \geq \alpha$. There exists an algorithm that draws
  $N = 2^{\wt{O}((M +\log(1/\eps)) \log(1/\alpha) + B)}$ samples from
  $\D(f, S)$, runs in time polynomial in the number of samples, and outputs a
  vector of coefficients $\vec v $ such that
  $\dtv(\D(f, K), \D(q_{\vec{v}}, K)) \leq \eps$.
\end{theorem}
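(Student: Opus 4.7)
The plan is to combine the structural guarantee of \Cref{thm:one_dimensional_aproximate_extrapolation} with an efficient convex optimization procedure applied to the empirical negative log-likelihood restricted to $D_k$. First I fix the degree $k = \Theta(M + \log(1/\eps))$ as prescribed there, so that \Cref{thm:one_dimensional_aproximate_extrapolation} reduces the problem to producing, from samples, a vector $\vec{v}$ with $q_{\vec{v}} \in D_k$ whose population KL-divergence from $\D(f,S)$ is within additive slack $\eta = 2^{-O(k\log(1/\alpha) + B)}$ of the optimum $r_k^*$.

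Next I exploit convexity of the optimization problem. Up to an additive constant, the population negative log-likelihood is $L(\vec{v}) = -\vec{v}^\top \E_{x \sim \D(f,S)}[\vec{m}_k(x)] + \log \int_S e^{q_{\vec{v}}(x)}\,dx$. The first term is linear in $\vec{v}$ and the log-partition term is convex, so $L$ is convex; the feasible set $D_k$ is convex as the intersection of $\mathcal{Q}_k$ with the $L_\infty(I, 3B)$ ball. An unbiased stochastic gradient is $-\vec{m}_k(x) + \E_{y \sim \D(\vec{v}, S)}[\vec{m}_k(y)]$ where $x \sim \D(f,S)$, and the inner expectation can be estimated by rejection sampling from $\D(\vec{v}, I)$ through the membership oracle for $S$. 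Because $\vec{v} \in D_k$ forces $|q_{\vec{v}}| \le 3B$ on $I$, the acceptance probability of the rejection sampler is at least $\alpha \cdot 2^{-O(B)}$, which keeps the sampling efficient.

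The algorithm I would deploy is the projected stochastic gradient descent procedure developed in \Cref{sec:sgd}, run on the empirical version of $L$ over $D_k$ with initialization $\vec{v}^{(0)} = \vec{0}$. To bound the sample complexity I track three quantities: the diameter of $D_k$ in coefficient space, which by standard Chebyshev/Markov bounds for polynomials with sup-norm $3B$ on $[0,1]$ is at most $2^{O(k)} B$; the second moment of the stochastic gradient, controlled by $\sup_{x \in I} \|\vec{m}_k(x)\|^2 \le k + 1$ inflated by the rejection overhead $2^{O(B)}/\alpha$; and the target excess risk $\eta = 2^{-O(k\log(1/\alpha)+B)}$. The standard $1/\sqrt{N}$ convergence guarantee of PSGD then yields $N = \widetilde{O}(\text{diameter}^2 \cdot \text{second moment} / \eta^2) = 2^{\widetilde{O}(k \log(1/\alpha) + B)} = 2^{\widetilde{O}((M + \log(1/\eps))\log(1/\alpha) + B)}$ iterations, each consuming one sample from $\D(f,S)$ and polynomially many rejection-sampled draws from $\D(\vec{v}^{(t)}, I)$, for a total runtime polynomial in $N$. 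Outputting the averaged iterate and invoking \Cref{thm:one_dimensional_aproximate_extrapolation} finishes the proof.

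The main obstacle is that the required additive excess-risk on the KL objective is \emph{exponentially} small in $k$, $\log(1/\alpha)$ and $B$, which is what ultimately drives the exponential sample complexity. The cancellation that makes this work is that the coefficient-diameter of $D_k$ and the gradient variance each grow at most as $2^{O(k\log(1/\alpha)+B)}$, so after squaring through the $1/\eta^2$ dependence of SGD we land at precisely the claimed exponent rather than something worse. A secondary technical point I would handle carefully is implementing the projection onto $D_k$ in polynomial time: the sup-norm constraint on polynomials has a separation oracle via evaluation on a fine grid, since the polynomial's Lipschitz constant on $[0,1]$ is controlled by the coefficient bound $2^{O(k)} B$ and the degree $k$; combined with the ellipsoid method (or a direct projected-subgradient variant) this yields an efficient projection.
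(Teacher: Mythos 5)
Your proposal is correct and follows essentially the same route as the paper: fix $k=\Theta(M+\log(1/\eps))$, invoke \Cref{thm:one_dimensional_aproximate_extrapolation} to reduce to approximate KL minimization over $D_k$ with exponentially small optimality gap, and solve that with the projected SGD guarantee of \Cref{thm:KL_minimization} (whose diameter, gradient-norm, and rejection-sampling analysis you effectively re-derive in the 1-D case), landing on the same $N = 2^{\wt{O}((M+\log(1/\eps))\log(1/\alpha)+B)}$ bound. The only cosmetic difference is your grid-based separation oracle for the sup-norm projection in place of the paper's Renegar-plus-Ellipsoid implementation, which is a fine substitute in one dimension.
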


\begin{proof}
We define the convex set
$D_k = \left\{ \vec v \in \R^m~:~ \max_{\vec{x} \in [0, 1]^d} \abs{\vec v^T \vec m_k(\bx)} \leq 3 B \right\}$,
where $m= \binom{d + k}{k}-1$. From
\Cref{thm:one_dimensional_aproximate_extrapolation} we can fix $k = O(M+ \log(1/\eps))$ and find a candidate
$\vec v$ such that
$\dkl(\D(f,S)\|\D(q_{\bv},S)) \leq \min_{\bu \in D_k}
\dkl(\D(f,S)\|\D(q_{\bu},S)) + 2^{-O(k \log(1/\alpha) + B)}$.
Therefore, from \Cref{thm:KL_minimization} we obtain that
with $N=2^{\wt{O}(k \log(1/\alpha) +B)}$ samples and
in time $\poly(N)$, we can compute such a candidate.
\end{proof}
 \section{Multi-Dimensional Densities} \label{sec:multivariate}

  In this section we show the general form of our Statistical Taylor Theorem
that applies to multi-dimensional densities. Although the techniques used in this section and involved and possibly of independent interest, our strategy to prove
this theorem is similar to the strategy that we followed in Section 
\ref{sec:one-dimensional}: (1) we identify the sufficient degree that we need to
use, (2) we handle approximation errors that we get as a result of finite sample,
and (3) we design an efficient algorithm to compute the solutions that are 
information-theoretically shown to exist. This leads to our main theorem.

\begin{theorem}[Multi-Dimensional Statistical Taylor Theorem] \label{thm:low_dimensional_algorithm}
    Let $f : [0, 1]^d \to \R$ with $f \in \LL(B, M)$ and $S \subseteq [0, 1]^d$
  such that $\vol(S) \geq \alpha$. Let
  $k = \wt{\Omega}(d^3M/\alpha^2 + B) + 2 \log(1/\eps)$, then there exists an
  algorithm that uses $N$ samples from $\D(f, S)$ with
  \[ N = 2^{\wt{O}(d^4 M /\alpha^2 + B d)} \cdot (1/\eps)^{O(d + \log(1/\alpha))}, \]
  runs in $\mathrm{poly}(N)$ time and outputs a vector
  $\vec{v}$ such that $\dtv(\D(f, K), \D(q_{\vec v}, K)) \leq \eps$.
\end{theorem}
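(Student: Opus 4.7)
The plan is to mirror the three-stage template used in the one-dimensional case (\Cref{thm:1DStatisticalTaylor}), but substitute the \emph{Distortion of Conditioning} lemma (\Cref{lem:distance_reduction}) for the one-dimensional polynomial-interpolation arguments. The three stages are: (i) an information-theoretic bound showing the population MLE polynomial over a bounded convex class extrapolates well; (ii) a stability argument that absorbs sample-based optimization error while preserving the extrapolation guarantee; and (iii) a direct call to the convex-optimization subroutine (\Cref{thm:KL_minimization}) used already in the one-dimensional proof, which converts samples into an approximate KL-minimizer over a convex body of polynomials.

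For stages (i)–(ii), fix the convex set $D_k = \{\vec v \in \R^{t_k - 1} : q_{\vec v} \in L_\infty([0,1]^d, 3B)\}$, let $p$ denote the degree-$k$ Taylor polynomial of $f$ around an arbitrary fixed center in $K=[0,1]^d$, and let $\vec v \in D_k$ be any near-minimizer satisfying $\dkl(\D(f, S) \| \D(q_{\vec v}, S)) \le r_k^* + 2^{-O(k \log(1/\alpha) + B)}$. By \Cref{thm:TaylorTheoremApp}, $\norm{f - p}_{L_\infty(K)} \le (15 d/k)^{k+1} M^{k+1}$, and since $f \in \LL(B, M)$ the log-normalizers of $\D(f, \cdot)$ and $\D(p, \cdot)$ agree up to $e^{O(B)}$. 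This makes both $\dtv(\D(f, S), \D(p, S))$ and $\dtv(\D(f, K), \D(p, K))$ at most $e^{O(B)}$ times the pointwise Taylor error. Pinsker's inequality applied on $S$ together with the triangle inequality then gives $\dtv(\D(q_{\vec v}, S), \D(p, S)) \le 2^{-\Omega(k \log(d/\alpha))}$ plus a Taylor contribution.

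The transport step is where the distortion lemma does the real work: applying \Cref{lem:distance_reduction} lifts closeness from $S$ to $K$ at a cost of a $(d/\alpha)^{O(k)}$ blow-up,
\[
\dtv(\D(q_{\vec v}, K), \D(p, K)) \le (d/\alpha)^{O(k)} \cdot \dtv(\D(q_{\vec v}, S), \D(p, S)),
\]
after which one final triangle inequality with $\dtv(\D(p, K), \D(f, K))$ yields the claim. The choice $k = \wt\Omega(d^3 M/\alpha^2 + B) + 2\log(1/\eps)$ is calibrated so that the super-exponential decay $(15d/k)^{k+1}M^{k+1}$ strictly outpaces the $(d/\alpha)^{O(k)}$ distortion, delivering $\dtv(\D(f, K), \D(q_{\vec v}, K)) \le \eps$.

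Stage (iii) is then a plug-and-play application of \Cref{thm:KL_minimization} on $D_k$: the empirical negative log-likelihood is convex in $\vec v$, the ambient dimension is $t_k - 1 = \binom{d+k}{k} - 1 \le k^{O(d)}$, and the required optimization accuracy is $2^{-O(k \log(1/\alpha) + B)}$. Substituting the chosen $k$ produces the stated $N = 2^{\wt O(d^4 M/\alpha^2 + B d)} \cdot (1/\eps)^{O(d + \log(1/\alpha))}$ sample complexity, where the $(1/\eps)^{O(d)}$ factor tracks the polynomial parameter dimension $t_k$ and $(1/\eps)^{O(\log(1/\alpha))}$ arises from the $(1/\alpha)^{O(k)}$ term once the $\log(1/\eps)$ part of $k$ is pulled out. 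The \emph{main obstacle} is the balancing act in stage (ii): one must prevent the $(d/\alpha)^{O(k)}$ conditioning distortion from wiping out the Taylor gain, which is exactly what forces the cubic $d^3/\alpha^2$ dependence in the degree and what makes \Cref{lem:distance_reduction} — proved by anti-concentration of low-degree polynomials rather than any multivariate interpolation estimate — the indispensable new ingredient over the single-dimensional argument.
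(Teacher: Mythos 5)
Your proposal is correct and follows essentially the same route as the paper: it combines the information-theoretic extrapolation guarantee (Taylor polynomial plus triangle inequalities plus the Distortion of Conditioning lemma, i.e.\ the content of \Cref{thm:low_dimensional_information_theoretic}) with the PSGD-based approximate KL minimization of \Cref{thm:KL_minimization} over the bounded polynomial class, and your degree calibration and sample-complexity bookkeeping match the paper's. No substantive differences or gaps.
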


\noindent The main bottleneck in the proof of the above theorem is that in the
multi-dimensional polynomial interpolation theory there are no sufficient tools
to prove the extrapolation properties of an estimator that can be computed 
efficiently. Our formulation of the extrapolation problem in the
language of density estimation enables us to use anti-concentration results
to prove extrapolation results for polynomial approximations. In particular, 
we prove our main lemma which we call ``Distortion of Conditioning Lemma'' which
we believe is of independent interest.

\subsection{Identifying the Sufficient Degree -- The Distortion of Conditioning Lemma} \label{sec:multiD:sufficientDegree}

The goal of this section is to identify the sufficient degree so that the MLE
polynomial approximates well the true density in the whole domain $K = [0, 1]^d$, 
i.e., it has small extrapolation error.

\begin{theorem}[Multivariate MLE Polynomial Extrapolation Error]
  \label{thm:low_dimensional_information_theoretic}
    Let $K = [0,1]^d \subseteq \R^d$, $f \in \LL(B, M)$ be function supported
  on $K$, and $S \subseteq K$ be a measurable subset of $K$ such that
  $\vol(S) \geq \alpha$.  Moreover, define
  \( k = \wt{\Omega}\left(\frac{d^3 M}{\alpha^2} + \log\left(\frac{2^B}{\eps}\right)\right) \),
  $
  D = \{ \vec v : \max_{\bx \in K} \abs{\vec v^T \vec m_k(\bx)} \le 3 B\}
  $,
  Moreover, let 
  $r^*_k = \min_{\vec u \in D} \dkl(\D(f, S) \| \D(\vec u, S))$.
  Then, for every $\vec u  \in D$ such that
  \( \dkl(\D(f, S) \| \D(\vec u, S)) \leq r^*_k +
  e^{-\wt{\Omega}\left(\frac{d^3 M}{\alpha^2} + B\right)} \cdot \left(\frac{1}{\eps}\right)^{-\Omega(\log(d/\alpha))} \,, \)
  it holds that $\dtv(\D(f, K), \D(\vec u, K)) \leq \eps$.
\end{theorem}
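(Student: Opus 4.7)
The plan is to follow the three-distribution argument sketched in \Cref{fig:distortion}, with a Taylor polynomial of $f$ serving as an intermediary between the true density and the candidate $\vec{u}$, and with the Distortion of Conditioning Lemma (\Cref{lem:distance_reduction}) doing the heavy lifting of transferring closeness from the conditional law on $S$ back to the full cube $K = [0,1]^d$. First I would invoke \Cref{thm:TaylorTheoremApp} together with the smoothness bound $\|\dr^{k+1} f\|_\infty \leq M^{k+1}$ provided by \Cref{def:bounded and high order smooth} to produce a degree-$k$ polynomial $p$ satisfying $\|f - p\|_{L_\infty(K)} \leq (15 d / k)^{k+1} M^{k+1} =: \tau$. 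For $k = \wt{\Omega}(d^3 M/\alpha^2 + B + \log(1/\eps))$ as in the statement, $\tau$ is negligibly small, so $\|p\|_\infty \leq B + \tau \leq 3B$ and hence (after dropping the constant term, which is absorbed by the normalizer $\psi(p,\cdot)$) the coefficient vector of $p$ lies in $D$. A standard estimate based on $|Z_p/Z_f - 1| = O(\tau)$ gives $\dtv(\D(f,T),\D(p,T)) \leq O(\tau)$ and $\dkl(\D(f,S)\|\D(p,S)) \leq 2\tau$ for both $T = S$ and $T = K$.

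Since $p \in D$, the optimum satisfies $r_k^* \leq \dkl(\D(f,S)\|\D(p,S)) \leq 2\tau$, and combining with the hypothesis $\dkl(\D(f,S)\|\D(\vec{u},S)) \leq r_k^* + \delta$ for $\delta := e^{-\wt{\Omega}(d^3 M/\alpha^2 + B)} (1/\eps)^{-\Omega(\log(d/\alpha))}$ gives $\dkl(\D(f,S)\|\D(\vec{u},S)) \leq 2\tau + \delta$. Pinsker's inequality turns this into $\dtv(\D(f,S),\D(\vec{u},S)) = O(\sqrt{\tau + \delta})$, and the triangle inequality on $S$ yields the analogous bound for $\dtv(\D(p, S), \D(\vec{u}, S))$.

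The crucial step is that $p$ and $q_{\vec{u}}$ are both degree-$k$ polynomials, so \Cref{lem:distance_reduction} applies and
\[
  \dtv(\D(p, K), \D(\vec{u}, K)) \;\leq\; (d/\alpha)^{O(k)} \cdot \dtv(\D(p, S), \D(\vec{u}, S)).
\]
The arithmetic is calibrated so that this factor is controlled: with our $k$, one has $(d/\alpha)^{O(k)} \leq e^{\wt{O}(d^3 M/\alpha^2 + B)} (1/\eps)^{O(\log(d/\alpha))}$, which is precisely the reciprocal of the two structural factors hidden in $\delta$ up to an $\eps$ loss, so $(d/\alpha)^{O(k)} \sqrt{\tau + \delta} \leq \eps/2$. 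One last triangle inequality
\[
  \dtv(\D(f, K), \D(\vec{u}, K)) \leq \dtv(\D(f, K), \D(p, K)) + \dtv(\D(p, K), \D(\vec{u}, K)) \leq O(\tau) + \eps/2 \leq \eps
\]
closes the argument.

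The genuine difficulty is concentrated entirely in \Cref{lem:distance_reduction}, whose proof requires an anti-concentration estimate for degree-$k$ polynomials on $[0,1]^d$ to rule out the possibility that two such densities could agree on a volume-$\alpha$ slice while diverging wildly elsewhere; taking that lemma as given, the remaining obstacle is purely a bookkeeping one, namely ensuring that the Taylor decay $(15 d/k)^{k+1} M^{k+1}$ dominates the conditioning blowup $(d/\alpha)^{O(k)}$. This balancing is precisely what forces the $d^3 M/\alpha^2$ contribution in the degree and the unusual $(1/\eps)^{-\Omega(\log(d/\alpha))}$ shape of the tolerated KL slack $\delta$.
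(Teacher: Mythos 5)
Your proposal is correct and follows essentially the same route as the paper's own proof: a Taylor polynomial of $f$ (with constant term dropped, shown to lie in $D$ via \Cref{lem:taylor_approximation}/\Cref{thm:TaylorTheoremApp}) serves as the intermediary, the approximate-MLE hypothesis plus Pinsker and a triangle inequality give closeness of $p$ and $q_{\vec u}$ on $S$, the Distortion of Conditioning Lemma transfers this to $K$ with a $(d/\alpha)^{O(k)}$ blowup, and the degree choice $k = \wt{\Omega}(d^3 M/\alpha^2 + B + \log(1/\eps))$ makes the Taylor error $\sqrt{r_k}$ beat that blowup, exactly as in the paper's calculation.
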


  The first step in proving 
\Cref{thm:low_dimensional_information_theoretic} is to understand the approximation
error as a function of the degree that we use when we have access to the 
population distribution $\D(f, S)$. This is established in the following lemma 
whose proof can be found in \Cref{sec:app:lem:taylor_approximation}.

\begin{lemma}[Approximation of Log-density]
  \label{lem:taylor_approximation}
  Let $K \subseteq \R^d$ be a convex set centered at the origin $\vec{0}$ of diameter
  $\diam_\infty(K) \leq R$ and let $f \in \LL(B, M)$ be a function
  supported on $K$.
  There exists polynomial
  $p(\bx)   = \vec v^T \vec m_k(\bx)$ such that for every $S \subseteq K$ it holds
  \( \dkl(\D(f,S)\| \D(\vec v, S)) \leq 2 \left(\frac{15 M R d}{k}\right)^{k+1} \)
  and
   \(
  \snorm{\infty}{\1_K p} \leq
  2 B
  + \left(\frac{15 M R d}{k}\right)^{k+1}. \)
\end{lemma}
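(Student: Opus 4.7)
The plan is to take $p$ to be the $k$-th order Taylor polynomial of $f$ about the origin, with its constant term $f(\vec 0)$ removed so that it lies in $\mathcal{Q}_k$; explicitly,
\[
  p(\vec x) = \bar f_k(\vec x;\vec 0) - f(\vec 0) = \sum_{\vec\alpha \in \N^d,\, 0 < |\vec\alpha| \le k} \frac{\dr_{\vec\alpha} f(\vec 0)}{\vec\alpha!}\, \vec x^{\vec\alpha}.
\]
Since $\D(\vec v, S)$ is invariant under adding a constant to the polynomial (the shift is absorbed by the normalization), I can freely interchange $p$ with $\bar f_k(\cdot;\vec 0)$ when working with the induced distribution. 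The single analytic input I would use is Theorem~\ref{thm:TaylorTheoremApp}, which together with $\|\dr^{k+1} f\|_\infty \le M^{k+1}$ (from $f \in \LL(B,M)$) and $\diam_\infty(K) \le R$ yields the pointwise estimate
\[
  |f(\vec x) - \bar f_k(\vec x;\vec 0)| \le \left(\frac{15 d}{k}\right)^{k+1} R^{k+1} M^{k+1} = \left(\frac{15 M R d}{k}\right)^{k+1} =: \eps_k
\]
for every $\vec x \in K$.

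From this pointwise bound, both conclusions should follow almost immediately. For the $L_\infty$ claim I would use a single triangle inequality, $|p(\vec x)| \le |\bar f_k(\vec x;\vec 0)| + |f(\vec 0)| \le (|f(\vec x)| + \eps_k) + B \le 2B + \eps_k$, which matches the stated bound. For the KL bound, after expanding the densities and cancelling the $f(\vec 0)$ shift between $p$ and $\psi(p,S)$, I would arrive at
\[
  \dkl(\D(f,S)\|\D(\vec v,S)) = \E_{\vec x \sim \D(f,S)}\bigl[f(\vec x) - \bar f_k(\vec x;\vec 0)\bigr] + \psi(\bar f_k(\cdot;\vec 0), S) - \psi(f, S),
\]
bound the expectation by $\eps_k$ via the pointwise estimate, and bound the log-partition difference by $\eps_k$ as well, using that $\bar f_k(\vec x;\vec 0) \le f(\vec x) + \eps_k$ on $S \subseteq K$ implies $\int_S e^{\bar f_k} \le e^{\eps_k} \int_S e^{f}$. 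Summing these two contributions gives $\dkl \le 2\eps_k = 2(15 M R d/k)^{k+1}$, as required.

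I do not foresee any substantive obstacle here; the argument is essentially Taylor's remainder theorem plus the two elementary facts that (i) a constant shift in the log-density leaves the distribution unchanged, and (ii) a uniform additive error of $\eps_k$ in the log-density translates to an additive error of at most $\eps_k$ in the log-partition function. The only care required is bookkeeping — one must invoke the Taylor estimate over the full set $K$ (rather than just $S$) so that it simultaneously controls the expectation under $\D(f,S)$ and the integrals defining $\psi(\cdot,S)$, which is automatic since $S \subseteq K$.
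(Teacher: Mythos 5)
Your proposal is correct and follows essentially the same route as the paper: take the degree-$k$ Taylor polynomial of $f$ around $\vec 0$ with its constant term removed, bound the KL divergence by twice the uniform log-density error (expectation term plus log-partition term), and bound $\|\1_K p\|_\infty$ by a triangle inequality, noting that the constant shift leaves the induced distribution unchanged. No gaps.
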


  From \Cref{lem:taylor_approximation} there exists $\vec v$ such that
  $\snorm{\infty}{\1_K \vec v^T \vec m_k(\bx) } \leq
  2 B + \left(15 M d/k\right)^{k+1}$.
  Moreover, from the same lemma we have that
  $
  \min_{\vec w \in D} \dkl(\D(f, S)\| \D(\vec w, S))
  \leq \dkl(\D(f, S)\| \D(\vec v, S))
  \leq 2 \left(15 M d/k\right)^{k+1}
  $.
  To simplify notation set $r_k = 2 \left(15 M d/k\right)^{k+1}$.
  Now, let $q(\bx) = \vec u^T\vec m_k(\bx)$ be any approximate minimizer in $D$ of
  the KL-divergence between $\D(q, S)$ and $\D(f, S)$ that
  satisfies
  \[ \dkl(\D(f,S) \| \D(\vec u, S)) \leq \min_{\vec w \in D} \dkl(\D(f, S)\| \D(\vec w, S))
  + \bar{\eps}
  \leq r_k + \bar{\eps}.\]
  From Pinsker's inequality and the subadditivity of the square root we get:
$\dtv(\D(f, S), \D(\vec u, S)) \leq \sqrt{r_k} + \sqrt{\bar{\eps}}$. Our next
step is to relate the conditional total variation $\dtv(\D(f, S), \D(\vec u, S))$
with the global total variation $\dtv(\D(f, K), \D(\vec u, K))$. For this we
develop a novel extrapolation technique based on anti-concentration of polynomial
functions. In particular we use the following theorem.

\begin{theorem} [Theorem 2 of \cite{CarberyW01}]
  \label{thm:cw_convex}
    Let $K = [0, 1]^d$ and let $p:\R^d \mapsto \R$ be a polynomial of degree
  at most $k$. If $q \ge 1$, then there exists a constant
  $C$ such that for any $\gamma > 0$ it holds
\(
  \left(\int_K |p(\bx)|^{q/k} \d \bx \right)^{1/q}
  \int_K \1\{|p(\bx)| \leq \gamma \} \d \bx
  \leq C \gamma^{1/k} ~ \min(q, d) \,.
  \)
\end{theorem}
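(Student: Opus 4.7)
The plan is to prove the Carbery--Wright inequality by combining a one-dimensional Remez-type anti-concentration bound for polynomials with a slicing argument that lifts the bound to $[0,1]^d$, exploiting the log-concavity of Lebesgue measure on the cube. The inequality is homogeneous in $p$ (both sides scale identically under $p \mapsto \lambda p$), so I may rescale to assume $\int_K |p|^{q/k}\, d\bx = 1$; the goal is then to show $|\{\bx \in K : |p(\bx)| \leq \gamma\}| \leq C \min(q,d)\, \gamma^{1/k}$.

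First I would handle the one-dimensional base case $d = 1$. The classical Remez inequality states that if $p$ is a univariate polynomial of degree at most $k$ on $[0,1]$ and $E \subseteq [0,1]$ is measurable, then $\|p\|_{L^\infty([0,1])} \leq T_k\!\left(\tfrac{2 + |E|}{2 - |E|}\right) \sup_{x \in E} |p(x)|$, where $T_k$ is the $k$-th Chebyshev polynomial. Applied with $E = \{x : |p(x)| \leq \gamma\}$, together with the elementary estimate $T_k(1+t) \leq (C/t)^k$ for small $t$, this gives $\|p\|_{L^\infty([0,1])} \leq (C/|E|)^k \gamma$. Since $\|p\|_{L^{q/k}([0,1])} \leq \|p\|_{L^\infty([0,1])}$ on the unit interval, rearranging yields $|E| \cdot \|p\|_{L^{q/k}}^{1/k} \leq C \gamma^{1/k}$, which is exactly the desired inequality in dimension $1$ (where $\min(q,d) = 1$).

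Next I would lift to general $d$ by slicing $K = [0,1]^d$ along an axis direction. For each line $\ell$ parallel to the axis, the restriction of $p$ to $\ell \cap K$ is a univariate polynomial of degree $\leq k$, and applying the 1D bound to each slice and integrating via Fubini over the $(d-1)$-dimensional family of slices expresses the sublevel-set measure as a weighted average of slice-wise sublevel-set measures. H\"older's inequality in the transverse directions then bounds this average in terms of the global $L^{q/k}$ norm. To keep log-concavity intact at each step (essential for moment inequalities that tie back to $L^{q/k}$), I would use that the uniform measure on $K$ is log-concave and that its marginals remain log-concave by Pr\'ekopa--Leindler, so the argument can be iterated one coordinate at a time.

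The main obstacle is to obtain the sharp $\min(q,d)$ factor rather than a factor that grows with $k$ or crudely with $d$. A direct iteration of 1D Remez on all $d$ coordinates produces a constant of order $(Ck)^d$ or worse, which is far too weak. The resolution, which is the technical heart of Carbery and Wright's argument, is to replace the Remez step on each slice with a reverse H\"older inequality for polynomials against log-concave measures that bounds higher moments by lower ones with a constant linear in the moment exponent and independent of $k$; combining this with an induction on $d$ that peels off one coordinate at a time (giving the $d$ cap) and a separate moment-based argument on the one-dimensional marginal distribution of $|p|$ (giving the $q$ cap) produces the final $\min(q,d)$ scaling. Threading the reverse H\"older step through the inductive slicing so that the polynomial structure survives marginalization and constants do not degrade with $k$ is the delicate part of the argument.
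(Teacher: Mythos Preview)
The paper does not prove this statement: it is quoted verbatim as Theorem~2 of Carbery--Wright \cite{CarberyW01} and used as a black box in the proof of the Distortion of Conditioning Lemma (\Cref{lem:distance_reduction}). There is therefore no ``paper's own proof'' to compare against.

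As for your sketch on its own merits: the broad strokes---homogeneity reduction, a one-dimensional Remez/anti-concentration bound, and a lift to $[0,1]^d$ using log-concavity and reverse H\"older inequalities for polynomials against log-concave measures---do reflect the architecture of Carbery and Wright's argument. But what you have written is an outline, not a proof. You correctly identify the crux (getting the $\min(q,d)$ factor rather than something exponential in $d$ or $k$) and then essentially defer it, saying the ``delicate part'' is threading the reverse H\"older step through the induction without constant blow-up. That is precisely the substance of the Carbery--Wright paper, and your proposal does not supply it: you neither state the reverse H\"older inequality you need (with the right dependence on the exponent and independence from $k$), nor explain how the induction on $d$ actually runs so that the slice polynomials remain in a class where the hypothesis applies, nor show where the two caps $q$ and $d$ separately arise. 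If you intend to include a self-contained proof, those are the pieces that must be filled in; otherwise, citing \cite{CarberyW01} as the paper does is the appropriate move.
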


\noindent This result is crucial for extrapolation because it
can be used to bound the behavior of a polynomial function even outside the
region from which we get the samples. This is the main idea of the following
lemma which is one of the main technical contributions of the paper and we
believe that it is of independent interest.

\begin{lemma}[Distortion of Conditioning]
  \label{lem:distance_reduction}
  Let $K = [0, 1]^d$ and let $p, q$ be polynomials of degree at most $k$ such
  that $p, q\in L_\infty(K, B)$. There exists absolute constant $C>0$ such that
  for every $S \subseteq K$ with $\vol(S) > 0$ it holds
  \[
  e^{-2 B} \vol(S)
  ~
  \leq
  \frac{\dtv(\D(p, K), \D(q, K))}{\dtv(\D(p, S), \D(q, S))}
  \leq
  ~
  8 e^{5 B} \frac{(2 C \min(d, 2k))^k}{\vol (S)^{k + 1}}\,.
  \,
  \]
\end{lemma}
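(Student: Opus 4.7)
The plan is to prove each inequality separately, both grounded in the algebraic identity that on $S$ we have $\D(p, K; \bx) = c_p\,\D(p, S; \bx)$ and $\D(q, K; \bx) = c_q\,\D(q, S; \bx)$, where $c_p = \Pr_{\D(p, K)}[S]$ and $c_q = \Pr_{\D(q, K)}[S]$ both lie in $[e^{-2B}\vol(S), e^{2B}\vol(S)]$ by the $L_\infty$ bound on $p, q$. The lower direction is a short conditioning argument; the upper direction is where anti-concentration really does the work.

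For the lower bound I restrict the $L^1$ integral defining total variation to $S$:
\[
2\,\dtv(\D(p,K), \D(q,K)) \;\geq\; \int_S |c_p\,\D(p, S; \bx) - c_q\,\D(q, S; \bx)|\, d\bx,
\]
and expand the integrand as $c_p(\D(p, S) - \D(q, S)) + (c_p - c_q)\D(q, S)$. A reverse triangle inequality gives a lower bound of $2 c_p\,\dtv(\D(p,S), \D(q,S)) - |c_p - c_q|$. Combining this with the crude bound $|c_p - c_q| \leq 2\,\dtv(\D(p,K), \D(q,K))$ and a short case split (on whether $|c_p - c_q|$ exceeds $c_p\,\dtv(\D(p, S), \D(q, S))$) yields $\dtv(\D(p, K), \D(q, K)) \gtrsim c_p\, \dtv(\D(p, S), \D(q, S)) \geq e^{-2B}\vol(S)\, \dtv(\D(p, S), \D(q, S))$.

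For the upper bound, let $h = p - q$, $\delta_S = \psi(p, S) - \psi(q, S)$, $\delta_K = \psi(p, K) - \psi(q, K)$, and define the extended density $\widetilde{\D}(p, S; \bx) = e^{p(\bx) - \psi(p, S)}$ (so that $\D(p, K; \bx) = c_p\,\widetilde{\D}(p, S; \bx)$ on all of $K$ and $\int_K\widetilde{\D}(q, S)\,d\bx = 1/c_q$). Decomposing $|c_p\widetilde{\D}(p,S) - c_q\widetilde{\D}(q, S)| \leq c_p|\widetilde{\D}(p, S) - \widetilde{\D}(q, S)| + |c_p - c_q|\widetilde{\D}(q, S)$ and integrating produces the master inequality
\[
2\,\dtv(\D(p, K), \D(q, K)) \;\leq\; c_p \int_K |\widetilde{\D}(p, S) - \widetilde{\D}(q, S)|\, d\bx + |e^{\delta_S - \delta_K} - 1|.
\]
The first term is where I invoke Carbery--Wright (\Cref{thm:cw_convex}) with $q = 2k$ on the degree-$\leq k$ polynomial $h - \delta_S$: chaining the MVT bound $|\widetilde{\D}(p, S) - \widetilde{\D}(q, S)| \leq (e^{2B}/\vol(S))|h - \delta_S|$, the anti-concentration estimate $\int_K|h - \delta_S|\, d\bx \leq (2(2C\min(d, 2k))^k/\vol(S)^{k+1})\int_S|h - \delta_S|\, d\bx$ (which follows by choosing the CW threshold $\gamma$ so that $\{|h - \delta_S|\leq\gamma\}$ has measure at most $\vol(S)/2$ and then forcing the super-level set to occupy at least half of $S$), and the mirror MVT bound $\int_S|h - \delta_S|\, d\bx \leq 2 e^{2B}\vol(S)\,\dtv(\D(p, S), \D(q, S))$ produces a bound of the required shape $e^{O(B)}\vol(S)\Lambda\,\dtv(\D(p, S), \D(q, S))$, where $\Lambda = (2C\min(d, 2k))^k/\vol(S)^{k+1}$.

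The main obstacle is the second term $|e^{\delta_S - \delta_K} - 1|$, since the naive bound $|c_p - c_q| \leq 2\,\dtv(\D(p, K), \D(q, K))$ couples it back to the quantity we are bounding and produces a useless self-referential inequality. My plan to decouple it is to use the identity $\delta_K - \delta_S = \log \E_{\D(q, K)}[e^{h - \delta_S}]$, which follows by writing $p = q + \delta_S + \varepsilon$ with $\varepsilon = h - \delta_S$ and unfolding $\psi(p, K)$. Since $|\varepsilon|_\infty \leq 4B$, the estimates $|\E[e^\varepsilon] - 1| \leq e^{4B}\E_{\D(q, K)}[|\varepsilon|]$ together with $|\log(1 + x)| \lesssim |x|$ combine to give $|\delta_K - \delta_S| \lesssim e^{O(B)}\int_K|h - \delta_S|\, d\bx$; the same Carbery--Wright and MVT chain used for the first term now controls this integral by $e^{O(B)}\vol(S)\Lambda\,\dtv(\D(p, S), \D(q, S))$ \emph{without} reintroducing $\dtv(\D(p, K), \D(q, K))$. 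Summing both terms and absorbing the $e^{O(B)}$ factors into the $e^{5B}$ prefactor finishes the proof.
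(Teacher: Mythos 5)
Your proposal is sound and, for the upper bound, follows a genuinely different route from the paper. The paper never decomposes $\dtv(\D(p,K),\D(q,K))$ directly: writing $r$ for the difference of the conditional log-densities, it lower-bounds $\dtv(\D(p,S),\D(q,S))$ by applying \Cref{thm:cw_convex} to $|1-e^{r}|$ on $S$ (with a case split on whether the threshold $\gamma$ exceeds $1$), upper-bounds $\dtv(\D(p,K),\D(q,K))$ via the quadratic KL bound of \Cref{lem:kl_quadratic_upper} followed by Pinsker, expresses both bounds through $\sqrt{\int_K r^2}$, and takes the ratio. You instead stay entirely in $L^1$: you split the global distance into a shape term and a normalization-mismatch term, control the shape term by pointwise exponential-Lipschitz estimates together with the Carbery--Wright comparison of $\int_K|h-\delta_S|$ with $\int_S|h-\delta_S|$, and---the key extra idea---decouple the normalization term through the identity $\delta_K-\delta_S=\log\E_{\bx\sim\D(q,K)}[e^{h(\bx)-\delta_S}]$, correctly avoiding the circular bound $|c_p-c_q|\le 2\dtv(\D(p,K),\D(q,K))$. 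This buys you a proof with no KL divergence, no Pinsker, and no case split on $\gamma$ (since you compare $|h-\delta_S|$ rather than $|1-e^{r}|$, linearity makes the sub/super-level-set argument unconditional); the paper's route, in exchange, is tighter in the exponential-in-$B$ factors. Your lower bound is essentially the paper's argument rearranged (restricting the $L^1$ integral to $S$ instead of expanding the conditional densities), so there the two proofs coincide in substance.

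Two caveats, neither fatal. First, your constants do not land on the stated ones: tracking your chain, the shape term alone contributes about $2e^{6B}$ and the normalization term roughly $e^{10B}$ to $e^{12B}$ (versus the stated $8e^{5B}$), and your lower-bound case split yields $\tfrac{1}{2}e^{-2B}\vol(S)$ (at best $\tfrac{2}{3}e^{-2B}\vol(S)$ with an optimized threshold) rather than $e^{-2B}\vol(S)$; the paper's own derivation of that direction has comparable slack. The crucial structure $(2C\min(d,2k))^k/\vol(S)^{k+1}$ is reproduced exactly, and every invocation of the lemma in the paper uses cruder prefactors ($4e^{15B}$ in \Cref{thm:low_dimensional_information_theoretic}, $4e^{10B}$ in \Cref{thm:one_dimensional_aproximate_extrapolation}), so nothing downstream is affected; but as written, ``absorbing the $e^{O(B)}$ factors into $e^{5B}$'' is not justified, and you should either tighten the bookkeeping or state the lemma with a larger absolute constant in the exponent. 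Second, with the choice $q=2k$ in \Cref{thm:cw_convex} you control $\left(\int_K|h-\delta_S|^2\right)^{1/2}$, so your $L^1$ comparison additionally needs $\int_K|h-\delta_S|\le\left(\int_K|h-\delta_S|^2\right)^{1/2}$ (Cauchy--Schwarz on the unit cube); alternatively take $q=k$, which gives the $L^1$ statement directly with the even better factor $\min(d,k)$.
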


\begin{remark}
    Both \Cref{thm:cw_convex} and \Cref{lem:distance_reduction}
  hold for the more general case where $K$ is an arbitrary convex subset of
  $\R^d$. We choose to state this weaker expression for ease of notation.
\end{remark}

  Unfortunately it is still not clear how to apply \Cref{lem:distance_reduction} because 
  it  assumes that both distributions that we are
comparing have as log-density a bounded degree polynomial. 
Nevertheless, we can use a sequence of triangle inequalities together with 
Taylor's Theorem (see \Cref{thm:TaylorTheoremApp}) to combine 
\Cref{lem:distance_reduction} and \Cref{lem:distance_reduction} from 
which we can prove \Cref{thm:low_dimensional_information_theoretic} 
as we explain in detail in
\Cref{sec:proof:low_dimension_information_theoretic}. The proof of the
Distortion of Conditioning Lemma is presented in  \Cref{sec:proof:distance_reduction}.

\subsection{Computing the MLE} \label{sec:sgd}

  In this section we describe an efficient algorithm that solves the Maximum
Likelihood problem that we need in order to apply  \Cref{thm:low_dimensional_information_theoretic}. 
  We solve the following  problem: given sample access to the conditional distribution $\D(f, S)$ and 
  some fixed degree $k$, our algorithm finds a polynomial $p$ of degree $k$ that
approximately minimizes the divergence $\dkl(\D(f, S) \| \D(\vec{u}, S))$.

\begin{theorem} \label{thm:KL_minimization}
    Let $f : [0, 1]^d \to \R$ and $S \subseteq [0, 1]^d$ with
  $\vol(S) \ge \alpha$. Fix a degree $k \in \N$, a parameter $C > 0$, 
  and define
  $D = \{\vec v ~ : ~ \max_{\vec{x} \in [0, 1]^d} |\bv^T \vec m_k(\bx)|  \leq C \}$.
  There exists an algorithm that draws $N =
  2^{O(dk)} (C^2/\eps)^2$
  samples from $\D(f, S)$, runs in time
  $2^{O(d k + C)}/(\alpha\eps^2)$, and outputs
  $\hat{\vec{v}} \in D$ such that
  $\dkl(\D(f, S) \| \D(\hat{\vec{v}}, S)) \leq \min_{\bu \in D}\dkl(\D(f, S) \| \D(\vec u, S)) + \eps$, 
 with probability at least $ 99 \%$.
\end{theorem}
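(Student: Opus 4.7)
The plan is to recognize this KL minimization as a stochastic convex optimization problem over the convex set $D$ and solve it by projected stochastic gradient descent (PSGD). Subtracting the $\vec{u}$-independent term $\E_{\vec{x}\sim\D(f,S)}[\log \D(f,S;\vec{x})]$, the objective becomes $L(\vec{u}) = -\E_{\vec{x}\sim\D(f,S)}[\vec{u}^T\vec{m}_k(\vec{x})] + \psi(\vec{u}, S)$, where $\psi(\vec{u}, S) = \log\int_S e^{\vec{u}^T\vec{m}_k(\vec{x})}\,d\vec{x}$. Because $\psi$ is the log-partition function of the exponential family with sufficient statistics $\vec{m}_k$ and base measure $\1_S\,d\vec{x}$, $\psi$ and hence $L$ are convex on the convex set $D$. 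The gradient is $\nabla L(\vec{u}) = -\E_{\vec{x}\sim\D(f,S)}[\vec{m}_k(\vec{x})] + \E_{\vec{y}\sim\D(\vec{u},S)}[\vec{m}_k(\vec{y})]$, so an unbiased stochastic gradient is produced by drawing one independent sample from each of the two distributions.

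Sampling from $\D(f,S)$ is provided by the data oracle. To sample from $\D(\vec{u},S)$, I use rejection sampling with proposal $\vec{y}\sim\mathrm{Unif}([0,1]^d)$: accept iff $\vec{y}\in S$ (checked via the membership oracle) and an independent coin of bias $e^{\vec{u}^T\vec{m}_k(\vec{y})-C}\in[0,1]$ lands heads. Validity uses $|\vec{u}^T\vec{m}_k|\le C$ on $[0,1]^d$; the acceptance probability is $\int_S e^{\vec{u}^T\vec{m}_k-C}\,d\vec{y}\ge \alpha e^{-2C}$, so each accepted sample costs $e^{O(C)}/\alpha$ proposals in expectation, each of which requires $2^{O(dk)}$ time to evaluate $\vec{m}_k$.

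The standard PSGD guarantee gives $\E[L(\bar{\vec{u}}_T)]-\min_D L \le R G / \sqrt{T}$, where $R$ is the $\ell_2$-diameter of $D$ and $G$ bounds the $\ell_2$-norm of the stochastic gradient. Since $\|\vec{m}_k(\vec{x})\|_2\le \sqrt{t_k}\le 2^{O(dk)}$ on $[0,1]^d$, we have $G\le 2^{O(dk)}$; by passing to a tensor-product orthonormal polynomial basis (e.g.~Legendre) on $[0,1]^d$, any polynomial of sup-norm at most $C$ has coefficient $\ell_2$-norm at most $2^{O(dk)}C$, so $R\le 2^{O(dk)}C$. Choosing $T=2^{O(dk)}(C/\eps)^2$ drives the expected suboptimality below $\eps/100$, and Markov's inequality then delivers accuracy $\eps$ with probability $99\%$ from a single run. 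The sample count is $N=2^{O(dk)}(C^2/\eps)^2$ from $\D(f,S)$; adding the per-iteration costs $e^{O(C)}/\alpha+2^{O(dk)}$ for the stochastic gradient and projection yields total runtime $2^{O(dk+C)}/(\alpha\eps^2)$.

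The main subtlety, and the place where care is required, is the Euclidean projection onto $D$: the set $D$ is defined by the uncountable constraint family $|\vec{v}^T\vec{m}_k(\vec{x})|\le C$ for $\vec{x}\in[0,1]^d$, and exact separation amounts to degree-$k$ polynomial maximization on the hypercube, which is generically hard. I would therefore project onto a polyhedral outer relaxation $\widetilde D = \{\vec{v}:|\vec{v}^T\vec{m}_k(\vec{x}_i)|\le 2C,\ i=1,\dots,N_{\mathrm{grid}}\}$, where $\{\vec{x}_i\}$ is a $\delta$-mesh of $[0,1]^d$ with $\delta=1/k^{O(1)}$ and $N_{\mathrm{grid}}=2^{O(d\log k)}$. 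A standard univariate-then-iterated Markov inequality for polynomials bounds the partial derivatives of a degree-$k$ polynomial by $k^2$ times its sup-norm, so a mean-value argument shows that any $\vec{v}\in\widetilde D$ in fact satisfies $\sup_{[0,1]^d}|\vec{v}^T\vec{m}_k(\vec{x})|\le 3C$. This only degrades the bounds $R,G$ by constants, the projection reduces to a polytope QP solvable in $2^{O(dk)}$ time, and all claimed complexities are preserved.
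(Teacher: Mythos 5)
Your proposal follows the paper's route almost exactly: the same convex MLE objective whose gradient is a difference of moment vectors, the same two-stage rejection sampler (uniform proposal, membership in $S$, acceptance bias $e^{\vec v^T\vec m_k(\vec x)-C}$ with cost $e^{O(C)}/\alpha$), the same PSGD analysis via a bound on the stochastic-gradient norm ($\sqrt{t_k}=2^{O(dk)}$) and on the diameter of the constraint set (the paper cites a coefficient bound for sup-norm-bounded polynomials on $[0,1]^d$, you get the same $2^{O(dk)}C$ order through an orthonormal-basis change), and Markov's inequality for the $99\%$ guarantee. The one genuine divergence is the projection step. The paper projects \emph{exactly} onto $D$: it runs the Ellipsoid method for the projection QP, implementing the separation oracle by Renegar's decision procedure for the existential theory of the reals applied to the systems $\{\vec v^T\vec m_k(\vec x)\ge C,\ \vec x\in[0,1]^d\}$ and $\{\vec v^T\vec m_k(\vec x)\le -C,\ \vec x\in[0,1]^d\}$, which costs $\poly(((d+1)k)^d)$ per call and stays within the claimed runtime. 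So your premise that exact separation is out of reach is not right in this regime: exponential-in-$d$ time is allowed, and the paper exploits exactly that. Your gridded outer relaxation $\widetilde D$ is a legitimate alternative, but it buys you nothing in the stated complexity and costs you something in the statement.

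Two concrete points to repair. First, with the relaxation your iterates and output live in $\widetilde D$, so you only guarantee $\sup_{[0,1]^d}|\hat{\vec v}^T\vec m_k(\vec x)|\le 3C$ (or $4C$), not $\hat{\vec v}\in D$ as the theorem asserts; shrinking the grid threshold to restore $\widetilde D\subseteq D$ destroys the comparison against $\min_{\vec u\in D}$, so with a grid you inherently prove a slightly weaker statement (which does suffice for \Cref{thm:low_dimensional_information_theoretic} and \Cref{thm:low_dimensional_algorithm}, since there the sup-norm bound only enters the distortion constants through the exponent, but you should say so explicitly or switch to the exact Renegar/Ellipsoid projection). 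Second, your mesh-size bookkeeping is off: the Markov bound controls $\|\partial_i p\|_\infty$ by $2k^2$ times the \emph{unknown} sup-norm $M$ of $p$, so the mean-value argument gives the self-bounding inequality $M\le 2C+2dk^2\delta M$, which forces $\delta\lesssim 1/(dk^2)$, not $1/k^{O(1)}$; the grid then has $(dk)^{O(d)}=2^{O(d\log(dk))}$ points. That is still comparable to the paper's own projection cost, but the dependence on $d$ in the mesh cannot be omitted.
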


  The algorithm that we use for proving \Cref{thm:KL_minimization} is
Projected Stochastic Gradient Descent with projection set $D$. In order to prove
the guarantees of \Cref{thm:KL_minimization} we have to prove: (1) an
upper bound on the number of steps that the PSGD algorithm needs, (2) find an
efficient procedure to project to the set $D$. For the latter we can use the
celebrated algorithm by Renegar for the existential theory of reals
\cite{Ren92b,Ren92a}, as we explain in detail in \Cref{sec:proof:KL_minimization}. 
To analyze PSGD we use the following lemma.

\begin{lemma}[Theorem 14.8 of \cite{ShalevB14}]
  \label{lem:sgd_convex}
  Let $R, \rho> 0$. Let $f$ be a convex function, $D\subseteq \R^d$ be a
  convex set of bounded diameter, $\diam_2(D) \leq R$, and let $\vec w^* \in
  \argmin_{\bw \in D} f(\bw)$.  Consider the following Projected Gradient
  Descent (PSGD) update rule
  $\bw_{t+1} = \mathrm{proj}_D(\bw_{t} - \eta \vec v_t)$,
  where $\vec v_t$ is an unbiased estimate of $\nabla f(\bw)$.  Assume that
  PSGD is run for $T$ iterations with $\eta = \sqrt{R^2\rho^2/T}$.  Assume
  also that for all $t$, $\snorm{2}{\vec v_t} \leq \rho$ with probability
  $1$.  Then, for any $\eps > 0$, in order to achieve $\E[f(\bar{\bw})] -
  f(\bw^*) \leq \eps $ it suffices that $T\geq R^2 \rho^2/\eps^2$.
\end{lemma}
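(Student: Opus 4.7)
The plan is to give the textbook proof of the PSGD convergence bound by tracking the squared distance of the iterates to the minimizer $\vec{w}^*$ and telescoping. The first step is to exploit that the Euclidean projection onto a convex set $D$ is $1$-Lipschitz. Since $\vec{w}^* \in D$, I would write
\[
\snorm{2}{\vec{w}_{t+1} - \vec{w}^*}^2 = \snorm{2}{\mathrm{proj}_D(\vec{w}_t - \eta \vec{v}_t) - \vec{w}^*}^2 \le \snorm{2}{\vec{w}_t - \eta \vec{v}_t - \vec{w}^*}^2,
\]
and then expand the right hand side as $\snorm{2}{\vec{w}_t - \vec{w}^*}^2 - 2\eta \langle \vec{v}_t, \vec{w}_t - \vec{w}^*\rangle + \eta^2 \snorm{2}{\vec{v}_t}^2$.

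The second step is to take expectation conditional on $\vec{w}_t$ and use the unbiasedness assumption $\E[\vec{v}_t \mid \vec{w}_t] = \nabla f(\vec{w}_t)$ together with convexity of $f$ to get $\E[\langle \vec{v}_t, \vec{w}_t - \vec{w}^*\rangle \mid \vec{w}_t] = \langle \nabla f(\vec{w}_t), \vec{w}_t - \vec{w}^* \rangle \ge f(\vec{w}_t) - f(\vec{w}^*)$. Combined with the bound $\snorm{2}{\vec{v}_t}^2 \le \rho^2$ (which holds with probability one, hence in expectation) and rearranging, this gives the per-step inequality
\[
\E[f(\vec{w}_t) - f(\vec{w}^*)] \le \frac{1}{2\eta}\left(\E\snorm{2}{\vec{w}_t - \vec{w}^*}^2 - \E\snorm{2}{\vec{w}_{t+1} - \vec{w}^*}^2\right) + \frac{\eta \rho^2}{2}.
\]

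The third step is to sum this inequality from $t=1$ to $T$ and telescope. The telescoping sum of the squared distances collapses to $\snorm{2}{\vec{w}_1 - \vec{w}^*}^2 - \E\snorm{2}{\vec{w}_{T+1} - \vec{w}^*}^2 \le R^2$ since $\diam_2(D) \le R$ and both endpoints lie in $D$. Dividing by $T$ and applying Jensen's inequality to $\bar{\vec{w}} = \frac{1}{T}\sum_{t=1}^T \vec{w}_t$ (using convexity of $f$) gives $\E[f(\bar{\vec{w}})] - f(\vec{w}^*) \le \frac{R^2}{2\eta T} + \frac{\eta \rho^2}{2}$. Finally, plugging in the prescribed step size $\eta = \sqrt{R^2\rho^2/T}$ balances the two terms and yields $\E[f(\bar{\vec{w}})] - f(\vec{w}^*) \le R\rho/\sqrt{T}$, so requiring this to be at most $\eps$ gives the claimed $T \ge R^2\rho^2/\eps^2$.

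There is no serious obstacle in this proof; it is a standard calculation. The only point to be careful about is the interplay between the stochastic gradient and the conditional expectation: one must invoke the tower property correctly so that the convexity bound, which holds in conditional expectation, transfers to the unconditional expectation used in the telescoping sum. The role of the projection is purely to ensure feasibility while not increasing the distance to $\vec{w}^* \in D$, and the uniform bound $\snorm{2}{\vec{v}_t} \le \rho$ is what prevents the $\eta^2 \snorm{2}{\vec{v}_t}^2$ term from blowing up.
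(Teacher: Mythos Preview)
The paper does not give its own proof of this lemma; it is quoted as Theorem~14.8 of \cite{ShalevB14} and used as a black box. Your argument is precisely the standard proof from that reference (non-expansiveness of the projection, expand the square, take conditional expectation and use convexity, telescope, apply Jensen to the average iterate), so there is nothing to compare and your write-up is correct. One small remark: the step size that actually balances the two terms $\tfrac{R^2}{2\eta T}$ and $\tfrac{\eta\rho^2}{2}$ is $\eta = R/(\rho\sqrt{T})$, not $\eta = R\rho/\sqrt{T}$ as written in the lemma statement; this is a typo in the paper you have inherited, but the final bound $T \ge R^2\rho^2/\eps^2$ is unaffected.
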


From the above lemma we can see that it remains to find an upper bound on the
diameter of the set $D$ and an upper bound on the norm of the stochastic
gradient $\norm{\vec{v}_t}$. The latter follows from some algebraic calculations
whereas the first one from tight bounds on the coefficients of a polynomial with
bounded values \cite{SBGK18}. For the full proof of \Cref{thm:KL_minimization}, see 
\Cref{sec:proof:KL_minimization}.
 
\subsection{Putting Everything Together -- The Proof of \Cref{thm:low_dimensional_algorithm}}
  From \Cref{thm:low_dimensional_information_theoretic} we have that
  if we fix the degree $k = O(d^3M/\alpha^2 + B) + 2 \log(1/\eps)$ then it
  suffices to optimize the function $L(\bv)$ of 
  \Cref{eq:kullback_leibler_objective} constrained in the convex set
  $
  D =
  \left\{
    \vec v \in \R^m~:~ \snorm{\infty}{\1_K \vec v^T \vec m_k(\bx) }
    \leq 3 B
 \right\}.
 $
 From \Cref{thm:low_dimensional_information_theoretic} we have that a
 vector $\vec v$ with optimality gap $2^{-\wt{\Omega}(d^3M/\alpha^2 +B)}
 (1/\eps)^{-\Omega(\log(d/\alpha)}$ achieves the extrapolation guarantee
 $\dtv(\D(f, K), \D(\bv_T, K)) \leq \eps$.  From
 \Cref{thm:KL_minimization} we have that there exists an algorithm that
 achieves this optimality gap with
 sample complexity
 \(
 N = B^2 2^{O(dk)} 2^{\wt{O}(d^3 M/\alpha^2 + B)} (1/\eps)^{O(\log(d/\alpha))}
 = 2^{\wt{O}(d^4 M /\alpha^2 + B d)} (1/\eps)^{d + \log(1/\alpha)}\ .
 \)
 
\bibliographystyle{alpha}
\bibliography{refs}

\newcommand{\etalchar}[1]{$^{#1}$}
\begin{thebibliography}{BGvdM92}

\bibitem[AF10]{ahamada2010non}
Ibrahim Ahamada and Emmanuel Flachaire.
\newblock Non-parametric econometrics.
\newblock {\em OUP Catalogue}, 2010.

\bibitem[BBGK18]{SBGK18}
Shalev Ben{-}David, Adam Bouland, Ankit Garg, and Robin Kothari.
\newblock Classical lower bounds from quantum upper bounds.
\newblock In {\em 59th {IEEE} Annual Symposium on Foundations of Computer
  Science, {FOCS} 2018, Paris, France, October 7-9, 2018}, pages 339--349,
  2018.

\bibitem[BGK10]{kernelmethods}
Zdravko~I. Botev, Joseph~F. Grotowski, and Dirk~P. Kroese.
\newblock Kernel density estimation via diffusion.
\newblock {\em The Annals of Statistics}, 38(5):2916--2957, 2010.

\bibitem[BGvdM92]{barron1992distribution}
Andrew~R Barron, Lhszl Gyorfi, and Edward~C van~der Meulen.
\newblock Distribution estimation consistent in total variation and in two
  types of information divergence.
\newblock {\em IEEE transactions on Information Theory}, 38(5):1437--1454,
  1992.

\bibitem[BRH11]{behmardi2011entropy}
Behrouz Behmardi, Raviv Raich, and Alfred~O Hero.
\newblock Entropy estimation using the principle of maximum entropy.
\newblock In {\em 2011 IEEE International Conference on Acoustics, Speech and
  Signal Processing (ICASSP)}, pages 2008--2011. IEEE, 2011.

\bibitem[BS91]{BS91}
Andrew~R. Barron and Chyong-Hwa Sheu.
\newblock Approximation of density functions by sequences of exponential
  families.
\newblock {\em The Annals of Statistics}, 19(3):1347--1369, 1991.

\bibitem[BSH93]{borsch1993smooth}
Axel B{\"o}rsch-Supan and Vassilis~A Hajivassiliou.
\newblock Smooth unbiased multivariate probability simulators for maximum
  likelihood estimation of limited dependent variable models.
\newblock {\em Journal of econometrics}, 58(3):347--368, 1993.

\bibitem[CDS20]{CannoneDeS19}
Cl{\'e}ment~L Canonne, Anindya De, and Rocco~A Servedio.
\newblock Learning from satisfying assignments under continuous distributions.
\newblock In {\em Proceedings of the Fourteenth Annual ACM-SIAM Symposium on
  Discrete Algorithms}, pages 82--101. SIAM, 2020.

\bibitem[CFT06]{chen2006efficient}
Xiaohong Chen, Yanqin Fan, and Viktor Tsyrennikov.
\newblock Efficient estimation of semiparametric multivariate copula models.
\newblock {\em Journal of the American Statistical Association},
  101(475):1228--1240, 2006.

\bibitem[Coh91]{cohen1991truncated}
A~Clifford Cohen.
\newblock {\em Truncated and censored samples: theory and applications}.
\newblock CRC press, 1991.

\bibitem[CS06]{canu2006kernel}
St{\'e}phane Canu and Alex Smola.
\newblock Kernel methods and the exponential family.
\newblock {\em Neurocomputing}, 69(7-9):714--720, 2006.

\bibitem[CW01]{CarberyW01}
Anthony Carbery and James Wright.
\newblock Distributional and {$\ell^{q}$} norm inequalities for polynomials
  over convex bodies in {$\mathbb{R}^{n}$}.
\newblock {\em Mathematical research letters}, 8(3):233--248, 2001.

\bibitem[DGTZ18]{DaskalakisGTZ18}
Constantinos Daskalakis, Themis Gouleakis, Christos Tzamos, and Manolis
  Zampetakis.
\newblock Efficient statistics, in high dimensions, from truncated samples.
\newblock In {\em the 59th Annual IEEE Symposium on Foundations of Computer
  Science (FOCS)}, 2018.

\bibitem[DGTZ19]{DaskalakisGTZ19}
Constantinos Daskalakis, Themis Gouleakis, Christos Tzamos, and Manolis
  Zampetakis.
\newblock Computationally and statistically efficient truncated regression.
\newblock In {\em Conference on Learning Theory}, pages 955--960, 2019.

\bibitem[Gaj88]{gajek1988minimax}
Leslaw Gajek.
\newblock On the minimax value in the scale model with truncated data.
\newblock {\em The Annals of Statistics}, 16(2):669--677, 1988.

\bibitem[GK17]{gokcesu2017online}
Kaan Gokcesu and Suleyman~S Kozat.
\newblock Online density estimation of nonstationary sources using exponential
  family of distributions.
\newblock {\em IEEE transactions on neural networks and learning systems},
  29(9):4473--4478, 2017.

\bibitem[Goo63]{good1963maximum}
Irving~J Good.
\newblock Maximum entropy for hypothesis formulation, especially for
  multidimensional contingency tables.
\newblock {\em The Annals of Mathematical Statistics}, 34(3):911--934, 1963.

\bibitem[GS00]{GS00}
Mariano Gasca and Thomas Sauer.
\newblock Polynomial interpolation in several variables.
\newblock {\em ADV. COMPUT. MATH}, 12:377--410, 2000.

\bibitem[Hec76]{heckman1976common}
James~J Heckman.
\newblock The common structure of statistical models of truncation, sample
  selection and limited dependent variables and a simple estimator for such
  models.
\newblock In {\em Annals of economic and social measurement, volume 5, number
  4}, pages 475--492. NBER, 1976.

\bibitem[IZC20]{IlyasZD20}
Andrew Ilyas, Manolis Zampetakis, and Daskalakis Constantinos.
\newblock A theoretical and practical framework for regressionand
  classification from truncated samples.
\newblock In {\em AISTATS 2020}, 2020.

\bibitem[KTZ19]{KontonisTZ19}
Vasilis Kontonis, Christos Tzamos, and Manolis Zampetakis.
\newblock Efficient truncated statistics with unknown truncation.
\newblock In {\em 2019 IEEE 60th Annual Symposium on Foundations of Computer
  Science (FOCS)}, pages 1578--1595. IEEE, 2019.

\bibitem[LR07]{li2007nonparametric}
Qi~Li and Jeffrey~Scott Racine.
\newblock {\em Nonparametric econometrics: theory and practice}.
\newblock Princeton University Press, 2007.

\bibitem[LY91]{lai1991estimating}
Tze~Leung Lai and Zhiliang Ying.
\newblock Estimating a distribution function with truncated and censored data.
\newblock {\em The Annals of Statistics}, pages 417--442, 1991.

\bibitem[Mad87]{maddala1987limited}
Gangadharrao~S Maddala.
\newblock Limited dependent variable models using panel data.
\newblock {\em Journal of Human resources}, pages 307--338, 1987.

\bibitem[McD17a]{mcdonald2017minimax}
Daniel McDonald.
\newblock Minimax density estimation for growing dimension.
\newblock In {\em Artificial Intelligence and Statistics}, pages 194--203,
  2017.

\bibitem[McD17b]{macdon}
Daniel~J McDonald.
\newblock Minimax density estimation for growing dimension.
\newblock {\em arXiv preprint arXiv:1702.08895}, 2017.

\bibitem[MG93]{GLS93}
Alexander Schrijver~(auth.) Martin~Grötschel, László~Lovász.
\newblock {\em Geometric Algorithms and Combinatorial Optimization}.
\newblock Algorithms and Combinatorics 2. Springer-Verlag Berlin Heidelberg, 2
  edition, 1993.

\bibitem[Ney37]{neyman1937smooth}
Jerzy Neyman.
\newblock Smooth test for goodness of fit.
\newblock {\em Scandinavian Actuarial Journal}, 1937(3-4):149--199, 1937.

\bibitem[PM84]{padgett1984nonparametric}
WJ~Padgett and Diane~T McNichols.
\newblock Nonparametric density estimation from censored data.
\newblock {\em Communications in Statistics-Theory and Methods},
  13(13):1581--1611, 1984.

\bibitem[Ren92a]{Ren92b}
James Renegar.
\newblock On the computational complexity and geometry of the first-order
  theory of the reals, part {III:} quantifier elimination.
\newblock {\em J. Symb. Comput.}, 13(3):329--352, 1992.

\bibitem[Ren92b]{Ren92a}
James Renegar.
\newblock On the computational complexity of approximating solutions for real
  algebraic formulae.
\newblock {\em {SIAM} J. Comput.}, 21(6):1008--1025, 1992.

\bibitem[Sco15]{Scott15}
David~W Scott.
\newblock {\em Multivariate density estimation: theory, practice, and
  visualization}.
\newblock John Wiley \& Sons, 2015.

\bibitem[SFG{\etalchar{+}}17]{sriperumbudur2017density}
Bharath Sriperumbudur, Kenji Fukumizu, Arthur Gretton, Aapo Hyv{\"a}rinen, and
  Revant Kumar.
\newblock Density estimation in infinite dimensional exponential families.
\newblock {\em Journal of Machine Learning Research}, 18, 2017.

\bibitem[Sim12]{Simonoff12}
Jeffrey~S Simonoff.
\newblock {\em Smoothing methods in statistics}.
\newblock Springer Science \& Business Media, 2012.

\bibitem[SSBD14]{ShalevB14}
Shai Shalev-Shwartz and Shai Ben-David.
\newblock {\em Understanding machine learning: From theory to algorithms}.
\newblock Cambridge university press, 2014.

\bibitem[Stu93]{stute1993almost}
Winfried Stute.
\newblock Almost sure representations of the product-limit estimator for
  truncated data.
\newblock {\em The Annals of Statistics}, 21(1):146--156, 1993.

\bibitem[Tsy08a]{tsybakov2008introduction}
Alexandre~B Tsybakov.
\newblock {\em Introduction to nonparametric estimation}.
\newblock Springer Science \& Business Media, 2008.

\bibitem[Tsy08b]{tsy08}
Alexandre~B Tsybakov.
\newblock {\em Introduction to nonparametric estimation}.
\newblock Springer Science \& Business Media, 2008.

\bibitem[Was06]{wasserman2006all}
Larry Wasserman.
\newblock {\em All of nonparametric statistics}.
\newblock Springer Science \& Business Media, 2006.

\bibitem[WGW13]{wang2013consistency}
Shaojun Wang, Russell Greiner, and Shaomin Wang.
\newblock Consistency and generalization bounds for maximum entropy density
  estimation.
\newblock {\em Entropy}, 15(12):5439--5463, 2013.

\bibitem[WJ94]{WandJ1994}
Matt~P Wand and M~Chris Jones.
\newblock {\em Kernel smoothing}.
\newblock CRC press, 1994.

\bibitem[Woo85]{woodroofe1985estimating}
Michael Woodroofe.
\newblock Estimating a distribution function with truncated data.
\newblock {\em The Annals of Statistics}, 13(1):163--177, 1985.

\bibitem[Wu10]{wu2010exponential}
Ximing Wu.
\newblock Exponential series estimator of multivariate densities.
\newblock {\em Journal of Econometrics}, 156(2):354--366, 2010.

\end{thebibliography}

\appendix

\section{Multidimensional Taylor's Theorem} \label{sec:app:taylor}

  In this section we present the Taylor's theorem for multiple dimensions and
we prove \Cref{thm:TaylorTheoremApp}. We remind the following notation 
from the preliminaries section 
$\vec{x}^{\vec{\alpha}} = x_1^{\alpha_1} \cdot x_2^{\alpha_2} \cdots x_d^{\alpha_d}$.

\begin{theorem}[Multi-Dimensional Taylor's Theorem] \label{thm:TaylorTheorem}
    Let $S \subseteq \R^d$ and $f : S \to \R$ be a $(k + 1)$-times
  differentiable function, then for any $\vec{x}, \vec{y} \in S$ it holds
  that
  \begin{align*}
    f(\vec{y}) = \sum_{\vec{\alpha} \in \N^d, \abs{\vec{\alpha}} \le k} \frac{\dr_{\vec{\alpha}} f(\vec{x})}{\vec{\alpha}!} (\vec{y} - \vec{x})^{\vec{\alpha}} + H_k(\vec{y}; \vec{x}), ~~~~ \text{with}
  \end{align*}
  \begin{align*}
    H_k(\vec{y}; \vec{x}) = \sum_{\vec{\beta} \in \N^d, \abs{\vec{\beta}} = k + 1} R_{\vec{\beta}}(\vec{y}; \vec{x}) (\vec{y} - \vec{x})^{\vec{\beta}} ~~ \text{and} ~~ R_{\vec{\beta}}(\vec{y}; \vec{x}) = \frac{\abs{\vec{\beta}}}{\vec{\beta}!} \int_0^1 (1 - t)^{\abs{\vec{\beta}} - 1} \dr_{\vec{\beta}} f(\vec{x} + t (\vec{y} - \vec{x})) dt.
  \end{align*}
\end{theorem}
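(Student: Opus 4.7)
The plan is to reduce the multi-dimensional statement to the classical one-dimensional Taylor expansion along the line segment from $\vec{x}$ to $\vec{y}$. Concretely, I would define the auxiliary function $g : [0,1] \to \R$ by $g(t) = f(\vec{x} + t(\vec{y}-\vec{x}))$, which is well-defined under the implicit assumption that this segment lies in $S$ (and which is automatic for the convex domains considered in the main body of the paper). Since $f$ is $(k+1)$-times differentiable, so is $g$, with $g(0) = f(\vec{x})$ and $g(1) = f(\vec{y})$, so that everything we need about $f$ restricted to the segment is encoded in $g$.

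The second step is to apply the one-dimensional Taylor theorem with integral remainder to $g$ on $[0,1]$, which yields
\[
g(1) = \sum_{j=0}^{k} \frac{g^{(j)}(0)}{j!} + \frac{1}{k!}\int_0^1 (1-t)^k\, g^{(k+1)}(t)\, dt,
\]
and then translate the derivatives $g^{(j)}$ back into partial derivatives of $f$ via the iterated chain rule. Explicitly, $g^{(j)}(t) = \sum_{\vec{u}\in [d]^j} \dr^j_{\vec{u}} f(\vec{x}+t(\vec{y}-\vec{x})) \prod_{i=1}^{j}(y_{u_i}-x_{u_i})$, and by grouping ordered tuples $\vec{u}$ according to the multi-index $\vec{\alpha}\in \N^d$ with $|\vec{\alpha}|=j$ they induce (which introduces the multinomial coefficient $j!/\vec{\alpha}!$), this collapses to $g^{(j)}(t) = \sum_{|\vec{\alpha}|=j} \frac{j!}{\vec{\alpha}!}\, \dr_{\vec{\alpha}} f(\vec{x}+t(\vec{y}-\vec{x}))\, (\vec{y}-\vec{x})^{\vec{\alpha}}$. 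Substituting $t=0$ and dividing by $j!$ turns the polynomial part of the one-dimensional expansion into exactly $\sum_{|\vec{\alpha}|\le k} \frac{\dr_{\vec{\alpha}} f(\vec{x})}{\vec{\alpha}!} (\vec{y}-\vec{x})^{\vec{\alpha}}$; substituting the general-$t$ identity into the integral remainder and pulling $(\vec{y}-\vec{x})^{\vec{\beta}}$ (with $|\vec{\beta}|=k+1$) outside the integral gives $\sum_{|\vec{\beta}|=k+1} \frac{k+1}{\vec{\beta}!}\bigl(\int_0^1 (1-t)^k\, \dr_{\vec{\beta}} f(\vec{x}+t(\vec{y}-\vec{x}))\, dt\bigr)(\vec{y}-\vec{x})^{\vec{\beta}}$, which matches $H_k(\vec{y};\vec{x})$ with $R_{\vec{\beta}}$ as defined.

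The only step that is not a mechanical substitution is the combinatorial collapse from the sum over ordered tuples $\vec{u}\in[d]^j$ to the sum over multi-indices $\vec{\alpha}\in\N^d$ with $|\vec{\alpha}|=j$ weighted by $j!/\vec{\alpha}!$; this step uses Schwarz's theorem on equality of mixed partial derivatives, which is legitimate under the $C^{k+1}$ hypothesis. I expect this to be the main conceptual obstacle, while everything else — the chain rule iterations and the rearrangement inside the integral — is routine bookkeeping that does not require any new analytic estimate beyond what is already contained in the one-dimensional Taylor theorem.
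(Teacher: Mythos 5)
Your proposal is correct, and it is the standard proof of this statement: restrict $f$ to the segment via $g(t)=f(\vec{x}+t(\vec{y}-\vec{x}))$, apply the one-dimensional Taylor theorem with integral remainder, and collapse the iterated chain-rule sum over ordered tuples $\vec{u}\in[d]^j$ into a sum over multi-indices with the multinomial weight $j!/\vec{\alpha}!$. Note, however, that the paper itself does not prove \Cref{thm:TaylorTheorem}; it states it as the classical multivariate Taylor theorem and only proves its consequence \Cref{thm:TaylorTheoremApp}, so there is no paper proof to compare against --- your write-up simply supplies the missing classical argument. Your bookkeeping checks out: with $|\vec{\beta}|=k+1$ the factor $\frac{1}{k!}\cdot\frac{(k+1)!}{\vec{\beta}!}=\frac{|\vec{\beta}|}{\vec{\beta}!}$ reproduces exactly the stated $R_{\vec{\beta}}$. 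Two small points you already half-address are worth making explicit: the statement as written allows an arbitrary $S\subseteq\R^d$, so one must assume the segment $[\vec{x},\vec{y}]$ lies in $S$ (harmless here, since the paper only applies the theorem on convex sets such as $[0,1]^d$); and the integral form of the remainder needs $g^{(k+1)}$ to be integrable on $[0,1]$, which is guaranteed under continuity of the $(k+1)$-st order partials --- again consistent with how the theorem is used, since functions in $\calL(B,M)$ are $k$-times continuously differentiable for all large $k$, and the same continuity hypothesis legitimizes the symmetry of mixed partials you invoke in the multinomial collapse.
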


  We now provide a proof of \Cref{thm:TaylorTheoremApp}.

\begin{proof}[Proof of \Cref{thm:TaylorTheoremApp}]
    We start by observing that
  \begin{align*}
    \abs{f(\vec{y}) - \bar{f}_k(\vec{y}; \vec{x})} \le \left(\sum_{\vec{\beta} \in \N^d, \abs{\vec{\beta}} = k + 1} \frac{1}{\vec{\beta}!} \right) \cdot R^{k + 1} \cdot W.
  \end{align*}
  This inequality follows from multidimensional Taylor's Theorem by
  some simple calculations. Now to show wanted result it suffices to show that
  $\sum_{\vec{\beta} \in \N^d, \abs{\vec{\beta}} = k + 1} \frac{1}{\vec{\beta}!} \le \left(\frac{15 d}{k} \right)^{k + 1}$. To prove
  the latter we first show that $\min_{\vec{\beta} \in \N^d, \abs{\vec{\beta}} = k + 1} \vec{\beta}! = (\ell!)^{d - r} ((\ell + 1)!)^{r}$ where
  $\ell = \lfloor \frac{k + 1}{d} \rfloor$ and $r = k + 1 \pmod{d}$. We prove
  this via contradiction, if this is not true then the minimum
  $\min_{\vec{\beta} \in \N^d, \abs{\vec{\beta}} = k + 1} \vec{\beta}!$ is
  achieved in multi-index $\vec{\beta}$ such that there exist $i, j \in [d]$
  such that $\beta_i < \ell$ and $\beta_j > \ell + 1$. In this case we define
  $\vec{\beta}'$ to be equal to $\vec{\beta}$ except for
  $\beta'_i = \beta_i + 1$ and $\beta'_j = \beta_j - 1$. In this case we get
  $\vec{\beta}'! < \frac{\beta_j}{\beta_i + 1} \vec{\beta}'! = \vec{\beta}!$,
  which contradicts the optimality of $\vec{\beta}$. Therefore we have that
  $\sum_{\vec{\beta} \in \N^d, \abs{\vec{\beta}} = k + 1} \frac{1}{\vec{\beta}!} \le \binom{d + k + 1}{k + 1} \frac{1}{((\ell + 1)!)^{d}}$. Now via upper bounds from Stirling's approximation we get that
  $\binom{d + k + 1}{k + 1} \frac{1}{((\ell + 1)!)^{d}} \le \frac{e^{k+1} \left(1+\frac{d}{k+1} \right)^{k+1}}{((k+1)/d)^{k+1} e^{-k-1}} \le \left(\frac{e^2 d}{k}\right)^{k+1} \left(1 + \frac{k}{k+1}\right)^{k+1}$
  and the Theorem follows from simple calculations on the last expression.
\end{proof}

\section{Missing Proofs for Single Dimensional Densities} \label{sec:app:singleDimensional}

  In this section we provide the proof of the theorems presented in  \Cref{sec:one-dimensional}.

\subsection{Proof of \Cref{thm:one_dimensional_exact_extrapolation}}
\label{sec:proof:one_dimensional_exact_extrapolation}

We are going to use the following result that bounds the
error of Hermite polynomial interpolation, wherein besides matching
the values of the target function the approximating polynomial  also matches  its derivatives.  The following
theorem can be seen as a generalization of Lagrange interpolation, where the
interpolation nodes are distinct, and Taylor's remainder theorem where we
find a polynomial that matches the first $k$ derivatives at a single node.
\begin{lemma}[Hermite Interpolation Error]
  \label{lem:hermite-interpolation-error}
  Let $x_1, \ldots, x_s$ be distinct nodes in $[a,b]$
  and let $m_1,\ldots, m_s \in \N$ such that
  $\sum_{i=1}^s m_i = k+1$.
  Moreover, let $f$ be a $(k+1)$ times continuously differentiable function over
  $[a,b]$ and $p$ be a polynomial of degree at most $k$ such that for each
  $x_i$
  $$
  p(x_i) = f(x_i)~~~ p'(x_i) = f'(x_i)~~~ \ldots
  ~~~
  p^{(m_i-1)} (x_i) = f^{(m_i-1)}(x_i)\,.
  $$
  Then for all $x\in[a,b]$, there exists $\xi \in (a,b)$ such
  that
  $$
  f(x) - p(x) = \frac{f^{(k+1)}(\xi)}{(k+1)!}
  \prod_{i=1}^s (x - x_i)^{m_i}\,.
  $$
\end{lemma}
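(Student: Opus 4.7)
The plan is to mimic the classical proof of Taylor's remainder theorem, but using a generalized form of Rolle's theorem that tracks zeros with multiplicity. Fix an arbitrary evaluation point $x \in [a,b]$. If $x$ coincides with one of the nodes $x_i$, both sides of the claimed identity vanish (the left side by the matching of $p$ and $f$ at $x_i$, the right by the factor $(x - x_i)^{m_i}$), so choose any $\xi$ and we are done. Assume therefore that $x \notin \{x_1,\dots,x_s\}$, and define the auxiliary function
\[
  g(t) \;=\; f(t) \;-\; p(t) \;-\; \lambda \prod_{i=1}^{s} (t - x_i)^{m_i},
  \qquad \lambda \;=\; \frac{f(x) - p(x)}{\prod_{i=1}^{s}(x - x_i)^{m_i}}.
\]
This choice of $\lambda$ forces $g(x) = 0$, while the Hermite interpolation conditions $p^{(j)}(x_i) = f^{(j)}(x_i)$ for $0 \le j \le m_i - 1$ together with the fact that $\prod_j (t-x_j)^{m_j}$ has a zero of order $m_i$ at $x_i$ show that $g$ has a zero of multiplicity $m_i$ at each $x_i$.

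The next step is a Rolle-style count. Counting multiplicity, $g$ vanishes with total multiplicity at least $\sum_{i} m_i + 1 = k + 2$ on $[a,b]$. A standard induction (each differentiation converts a multiplicity-$n$ zero at a node into a multiplicity-$(n-1)$ zero at that node, and Rolle's theorem produces one additional zero strictly between each consecutive pair of zeros) shows that each time we differentiate, the total multiplicity-counted zero count decreases by at most one. Iterating $k+1$ times, $g^{(k+1)}$ must still possess at least one zero $\xi$ in the open interval $(a,b)$. Since $\deg(p) \le k$ we have $p^{(k+1)} \equiv 0$, and since $\prod_i (t-x_i)^{m_i}$ is a monic polynomial of degree exactly $k+1$ we have $\tfrac{d^{k+1}}{dt^{k+1}}\prod_i (t-x_i)^{m_i} \equiv (k+1)!$. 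Hence $g^{(k+1)}(\xi) = f^{(k+1)}(\xi) - \lambda(k+1)! = 0$, which yields $\lambda = f^{(k+1)}(\xi)/(k+1)!$. Substituting this value of $\lambda$ into the definition of $\lambda$ gives the desired identity.

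The only non-routine step is the generalized Rolle's theorem: the bookkeeping that zeros with multiplicity $m_i$ really do contribute $m_i$ to the budget of allowed differentiations. I would handle this cleanly by induction on $k$, establishing the lemma: if a $C^{r}$ function has total zero multiplicity $N$ on $[a,b]$, then its $r$-th derivative has at least $N - r$ zeros in $(a,b)$ whenever $r < N$. The base case is immediate, and the inductive step combines the ordinary Rolle's theorem (one new zero per gap between distinct roots) with the observation that differentiating a multiplicity-$n$ zero preserves a multiplicity-$(n-1)$ zero at the same point. This is the only step where a careless counting argument could miscount, but the accounting is standard.
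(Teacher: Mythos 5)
Your proof is correct, and it is the standard textbook argument for the Hermite interpolation error formula: introduce the auxiliary function $g(t)=f(t)-p(t)-\lambda\prod_i (t-x_i)^{m_i}$ with $\lambda$ tuned so that $g(x)=0$, apply a generalized Rolle theorem to extract $\xi$ with $g^{(k+1)}(\xi)=0$, and use $p^{(k+1)}\equiv 0$ together with the fact that the node polynomial is monic of degree $k+1$. There is nothing to compare against in the paper itself: the authors state \Cref{lem:hermite-interpolation-error} as a known classical result (a common generalization of Lagrange interpolation and Taylor's remainder theorem) and give no proof, so your write-up supplies exactly the argument they are implicitly invoking. One small caution: your auxiliary lemma as phrased (``if a $C^{r}$ function has total zero multiplicity $N$ on $[a,b]$, then its $r$-th derivative has at least $N-r$ zeros in $(a,b)$'') is false in the degenerate case where all the multiplicity sits at a single endpoint --- e.g.\ $h(t)=(t-a)^{N}$ has $h^{(r)}$ vanishing only at $t=a$, not in the open interval. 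The statement should either place the derivative zeros in $[a,b]$, or require at least two distinct zero locations to get a zero strictly inside. In your application this is harmless: since $x\notin\{x_1,\dots,x_s\}$ there are at least two distinct zero points, and one checks along the induction that whenever the remaining multiplicity budget is at least two, the zeros of the current derivative lie at at least two distinct points, so the final Rolle step does land $\xi$ strictly inside $(a,b)$. With that bookkeeping made explicit, the proof is complete.
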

We are also going to use the following upper bound on Kullback-Leibler
divergence.  For a proof see Lemma 1 of \cite{BS91}.
\begin{lemma}
  \label{lem:kl_quadratic_upper}
  Let $\mcal{P}, \mcal{Q}$ be distributions on $\R$ with corresponding density
  functions $p, q$.  Then for any $c>0$ it holds
  $$
  \dkl(\mcal{P}\| \mcal{Q})
  \leq e^{\snorm{\infty}{\log(p(x)/q(x)) -c}}
  \int p(x) \lp(\log \frac{p(x)}{q(x)} - c \rp)^2 \d x\, .
  $$
\end{lemma}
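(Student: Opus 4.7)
The plan is to reduce the stated inequality to an elementary scalar estimate together with the normalization $\int q = 1$. Set $v(x) \eqdef \log(p(x)/q(x)) - c$, so the target inequality reads $\dkl(\mcal{P}\|\mcal{Q}) \le e^{\snorm{\infty}{v}} \int p(x)\, v(x)^2\, \d x$, and decompose $\dkl(\mcal{P}\|\mcal{Q}) = \int p(x) \log(p(x)/q(x))\, \d x = \int p(x)\, v(x)\, \d x + c$. To control the additive constant $c$, I use that $q$ is a probability density: $1 = \int q(x)\, \d x = \int p(x)\, e^{-(v(x)+c)}\, \d x$, which rearranges to $\int p(x)\, e^{-v(x)}\, \d x = e^c$. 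Applying the standard inequality $\log t \le t - 1$ with $t = e^c$ gives $c \le \int p(x)\, e^{-v(x)}\, \d x - 1$, and substituting back collapses the first-order dependence on $v$, yielding
\[ \dkl(\mcal{P}\|\mcal{Q}) \le \int p(x)\, \bigl(v(x) + e^{-v(x)} - 1\bigr)\, \d x. \]

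The second step is the pointwise bound $y + e^{-y} - 1 \le \tfrac{1}{2}\, y^2\, e^{|y|}$ for every $y \in \R$. I would prove this by comparing Taylor series: $y + e^{-y} - 1 = \sum_{k \ge 2} (-y)^k/k!$ is nonnegative (via $e^x \ge 1 + x$), so $y + e^{-y} - 1 \le \sum_{k \ge 2} |y|^k/k!$. On the other hand $\tfrac{1}{2}\, y^2\, e^{|y|} = \sum_{k \ge 2} |y|^k/(2(k-2)!)$, and the term-wise ratio $2(k-2)!/k! = 2/(k(k-1)) \le 1$ for $k \ge 2$ gives the bound. Substituting this pointwise into the previous display and pulling out $e^{|v(x)|} \le e^{\snorm{\infty}{v}}$ produces
\[ \dkl(\mcal{P}\|\mcal{Q}) \le \tfrac{1}{2}\, e^{\snorm{\infty}{v}} \int p(x)\, v(x)^2\, \d x, \]
which is slightly stronger than the statement of the lemma.

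I do not anticipate any real obstacle. The one step that requires a bit of thought is the decision to introduce the shift $c$ \emph{before} applying $\log t \le t - 1$; this is exactly what cancels the first-order contribution of $v$ to the integrand and leaves the quadratic remainder $\int p\, v^2$. Everything else is the inequality $\log t \le t - 1$ and a term-wise comparison of two explicit power series, both of which are essentially one-line arguments.
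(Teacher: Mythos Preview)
Your proof is correct and is essentially the standard argument from Barron--Sheu \cite{BS91}, which is exactly what the paper cites for this lemma (the paper gives no proof of its own). The two steps you identify---using $\int q = 1$ together with $\log t \le t-1$ to replace the linear term by $\int p(e^{-v}-1)$, and then the pointwise bound $y + e^{-y} - 1 \le \tfrac{1}{2} y^2 e^{|y|}$---are precisely the ingredients of their Lemma~1, and your observation that the factor $\tfrac{1}{2}$ survives matches their statement as well. One incidental remark: your argument nowhere uses the hypothesis $c>0$ (indeed $\log t \le t-1$ holds for all $t>0$, hence the step $c \le e^c - 1$ holds for all real $c$), which is consistent with how the paper later applies the lemma with $c$ of unrestricted sign.
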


Before, the proof of \Cref{thm:one_dimensional_exact_extrapolation} we
are going to show a useful lemma.  Let $f, g$ be two density functions such
that $\dkl(f\|g) > 0$ and let $r$ another function $r$ that lies strictly
between the two densities $f, g$.  The following lemma states that  after we
normalize $r$ to become a density function $\bar{r}$ we get that $\bar{r}$
is closer to $f$ in Kullback-Leibler divergence than $g$.
\begin{lemma}[Kullback-Leibler Monotonicity]
  \label{lem:kl-monotonicity}
    Let $f, g$ be density functions over $\R$ such that the measure defined by
  $f$ is absolutely continuous with respect to that defined by $g$, i.e. the
  support of $f$ is a subset of that of $g$. Let also $r$ be an integrable
  function such that $r(x) \geq 0$, for all $x \in \R$, and moreover, for all
  $x \in \R \setminus Z$
  \[ f(x) \leq r(x) < g(x) \quad \text{ or } \quad g(x) < r(x) \leq f(x) \]
  where $Z$ is a set that has measure $0$ under both $f$ and $g$. Then, if
  $ \bar{r}(x) = r(x)/\int r(x) \d x$ is the density function corresponding to
  $r(\cdot)$, it holds that
  \[ \dkl(f \| \bar{r}) < \dkl(f \| g). \]
\end{lemma}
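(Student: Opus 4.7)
The plan is to reduce the statement to a single scalar inequality and then establish it via a pointwise comparison, Jensen's inequality, and an elementary estimate. Writing $Z_r = \int r \, \d x$ and $\bar r = r/Z_r$, a direct computation gives
\begin{equation*}
\dkl(f\|g) - \dkl(f\|\bar r) \;=\; \int f \log(\bar r / g) \, \d x \;=\; \int f \log(r/g) \, \d x \;-\; \log Z_r,
\end{equation*}
so it suffices to prove $\int f \log(r/g) \, \d x > \log Z_r$. Using the absolute continuity of $f$ with respect to $g$ and the hypothesis on $r$ (which forces $r = 0$ wherever $g = 0$, since neither of the two strict inequalities is compatible with $g(x) = 0$), I can restrict to the set $\{g > 0\} \setminus Z$, where $u := r/g$ is well defined and, by assumption, satisfies $u \neq 1$.

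The first key step is the pointwise inequality $(f - r)\log(r/g) \geq 0$ on $\{g>0\}\setminus Z$. This follows by the case split in the hypothesis: on $A = \{f \leq r < g\}$ both factors are non-positive, and on $B = \{g < r \leq f\}$ both are non-negative, with equality only when $f = r$. Integrating, and then expanding $\int r \log(r/g) \, \d x = Z_r \dkl(\bar r \| g) + Z_r \log Z_r$ (a one-line substitution of $\bar r = r/Z_r$), and finally dropping the non-negative $\dkl(\bar r \| g)$ term, yields
\begin{equation*}
\int f \log(r/g) \, \d x \;\geq\; \int r \log(r/g) \, \d x \;\geq\; Z_r \log Z_r.
\end{equation*}

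The last ingredient is the elementary observation that $t \log t \geq \log t$ for every $t > 0$, with equality only at $t = 1$ (immediate from $(t-1)\log t \geq 0$ on $(0,\infty)$). Applied to $t = Z_r$ this gives $Z_r \log Z_r \geq \log Z_r$, and concatenating everything produces $\int f \log(r/g) \, \d x \geq \log Z_r$, which is the desired bound with $\geq$ in place of $>$.

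The main subtlety is tracking where the strictness comes from, since the chain passes through three non-strict steps. If $f = r$ almost everywhere, then $\bar r = f$, so $\dkl(f\|\bar r) = 0$ while $\dkl(f\|g) > 0$ (because $r \neq g$ on a set of positive measure forces $f \neq g$), and we are done by inspection. Otherwise $(f - r)\log(r/g) > 0$ on a set of positive measure, which makes the very first inequality in the chain strict. In either regime we get $\dkl(f\|\bar r) < \dkl(f\|g)$, completing the argument.
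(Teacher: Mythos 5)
Your proof is correct and follows essentially the same route as the paper: the identical reduction $\dkl(f\|g)-\dkl(f\|\bar r)=\int f\log(r/g)\,\d x-\log Z_r$, followed by the same pointwise sandwich comparison giving $\int f\log(r/g)\,\d x\ge\int r\log(r/g)\,\d x$. The only differences are cosmetic: you finish by writing $\int r\log(r/g)\,\d x=Z_r\log Z_r+Z_r\,\dkl(\bar r\|g)$ and using $(t-1)\log t\ge 0$, where the paper instead uses $\log(1+z)\le z$ together with the pointwise inequality $\log z-1+1/z>0$, and you obtain strictness via a case split rather than pointwise — both are equivalent elementary convexity facts, and your argument (like the paper's) implicitly works modulo the same measure-zero/support caveats; your parenthetical claim that $r=0$ wherever $g=0$ needs the support containment of $f$ in $g$ (the inequality $g<r\le f$ alone is compatible with $g=0$), but this does not affect the proof.
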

\begin{proof}
  To simplify notation we are going to assume that the support of $f$ is the entire
  $\R$, and we define the sets $A_< = \{x\in\R: r(x) < g(x)\}$, $A_> = \{x \in \R
  : r(x) > g(x) \}$.  In the following proof we are going to ignore the
  measure zero set where the assumptions about $g, r, f$ do not hold.  Denote
  $C = \int r(x) \d x = 1 - \int (g(x) - r(x)) \d x $.
  We have
  \begin{align*}
    \dkl(f\|g) - \dkl(f\|\bar{r})
  &= \int f(x) \log \frac{ \bar{r}(x)}{g(x)}
  \ \d x
  = \int f(x) \log \frac{r(x)}{g(x)}\ \d x
  - \log C
  \\
  &\geq \int r(x) \log \frac{r(x)}{g(x)} \ \d x
  - \log C\, ,
  \end{align*}
  where for the last inequality we used the fact that $f(x) < r(x)$ for all
  $x \in A_{<}$ and $f(x) > r(x)$ for all $x \in A_{>}$.  Using the
  inequality $\log (1 + z) \leq z $ we obtain
  $-\log C \geq  \int (g(x) - r(x) ) \d x. $ Using this fact we obtain
  \begin{align*}
    \dkl(f\|p) - \dkl(f\|\bar{r})
  &\geq \int r(x) \log \frac{r(x)}{g(x)} \ \d x + \int (g(x) - r(x))\ \d x
  \\
  &\geq \int \left(r(x) \log \frac{r(x)}{g(x)} + g(x) - r(x) \right) \d x.
  \end{align*}
  To finish the proof we observe that $r(x) \log(r(x)/g(x)) + g(x) - r(x) > 0$
  for every $x $.  To see this we rewrite the inequality as
  $
  \log \frac{r(x)}{g(x)} -1 + \frac{g(x)}{r(x)} > 0.
  $
  To prove this we use the inequality $\log(z) - 1 +1/z > 0$
  for all $z \neq 1$.
\end{proof}

We are now ready to prove the main result of this section: \Cref{thm:one_dimensional_exact_extrapolation}.

\subsubsection*{Proof of \Cref{thm:one_dimensional_exact_extrapolation}}
Recall that
  $$
  p = \argmin_{q \in \mathcal{Q}_k} \dkl(\D(f,S) \| \D(q, S))\,.
  $$

  To simplify notation, we define the functions $\phi_f(x) = f(x) - \psi(f,S)$
and $\phi_p(x) = p(x) - \psi(p,S)$.  Notice that these are the log
  densities of the conditional distributions $\D(f,S)$ and $\D(p, S)$ on the set
  $S$, viewed as functions over the entire interval $I$ (i.e.~they
  are the conditional log densities without the indicator $\1_S(x)$).
  Notice that $\phi_p$ is a polynomial of degree at most $k$.
  Let $g(x) = \phi_f(x) - \phi_p(x)$.
  We first show the following claim.
  \begin{claim}
    \label{clm:information-projection-roots}
      The equation $g(x) = 0$ has at least $k+1$ roots in $S$, counting
    multiplicities.
  \end{claim}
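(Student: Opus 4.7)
The plan is to extract a moment-matching identity from the first-order optimality of the MLE polynomial $p$ in $\mathcal{Q}_k$ and then invoke a classical sign-counting argument.

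First I would derive the optimality condition. Since $p$ minimizes $\dkl(\D(f,S)\,\|\,\D(q,S))$ over all $q \in \mathcal{Q}_k$, perturbing $p$ in an arbitrary direction $h \in \mathcal{Q}_k$ and differentiating at $t=0$ yields, using $\log \D(p+th,S;x) = (p+th)(x) - \psi(p+th,S)$ for $x \in S$ and $\frac{d}{dt}\psi(p+th,S)\big|_{t=0} = \E_{\D(p,S)}[h]$,
\[
\E_{\D(f,S)}[h(x)] = \E_{\D(p,S)}[h(x)].
\]
Because both $\D(f,S;\cdot)$ and $\D(p,S;\cdot)$ integrate to one over $S$, this identity extends to every polynomial of degree at most $k$, including constants, so
\[
\int_S h(x)\bigl(\D(f,S;x) - \D(p,S;x)\bigr)\,dx = 0 \quad \text{for every } h \text{ with } \deg h \le k.
\]

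Next I would factor the integrand. For $x \in S$, $\D(f,S;x) - \D(p,S;x) = e^{\phi_p(x)}\bigl(e^{g(x)} - 1\bigr)$, and $e^{\phi_p(x)} > 0$ while $e^{g(x)} - 1$ has the same sign as $g(x)$. Thus the integrand's sign pattern on $S$ is exactly that of $g$.

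Now I would run a sign-counting contradiction. If $g \equiv 0$ on $S$ the conclusion is immediate since $\vol(S) > 0$ provides uncountably many roots. Otherwise, suppose for contradiction that the zeros of $g$ in $S$, with multiplicities $m_1,\dots,m_s$ at distinct points $x_1 < \cdots < x_s$, satisfy $\sum_i m_i \le k$. Split the indices into $I = \{i : m_i \text{ odd}\}$ and $E = \{j : m_j \text{ even}\}$ and consider
\[
h(x) = \sigma \prod_{i \in I}(x - x_i)\cdot\prod_{j \in E}(x - x_j)^2,
\]
whose degree is at most $\sum_i m_i \le k$. Near each $x_i$, $g(x)$ and $h(x)$ have the same parity of local behavior determined by $m_i$, so a single global sign choice $\sigma \in \{-1,+1\}$ makes $h$ agree in sign with $g$ on $S$ outside the (measure zero) zero set. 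Then $(e^{g(x)} - 1)h(x) \ge 0$ on $S$ and is strictly positive on a set of positive measure, forcing
\[
\int_S e^{\phi_p(x)}\bigl(e^{g(x)} - 1\bigr)h(x)\,dx > 0,
\]
in contradiction with the optimality identity above. Hence $g$ has at least $k+1$ zeros in $S$ counting multiplicities.

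The main obstacle is the construction of $h$ in the sign-counting step, specifically verifying that a single sign choice $\sigma$ suffices to match the sign of $g$ on all of $S$. If $S$ were disconnected, additional sign changes of $g$ across gaps of $S$ could in principle demand roots of $h$ lying outside $S$, enlarging its degree; this must be handled by a careful component-by-component analysis showing that any such extra roots are charged to zeros of $g$ already counted in the multiset $\{x_i\}$. Once this local-to-global sign matching is nailed down, the rest is a routine consequence of convex optimality and positivity of $e^{\phi_p}$.
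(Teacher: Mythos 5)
Your route is genuinely different from the paper's: you extract the first-order optimality (moment-matching) identity $\int_S h(x)\bigl(\D(f,S;x)-\D(p,S;x)\bigr)\,dx=0$ for all $h$ of degree at most $k$ and then run a Chebyshev-type sign-counting argument, whereas the paper perturbs the log-density itself, setting $r=\phi_p+\lambda q$ for a root- and sign-matching polynomial $q$ and a small $\lambda>0$, and invokes its Kullback--Leibler monotonicity lemma (\Cref{lem:kl-monotonicity}) to contradict minimality. Your optimality identity and the factorization $\D(f,S;x)-\D(p,S;x)=e^{\phi_p(x)}\bigl(e^{g(x)}-1\bigr)$ are correct.

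However, the gap is exactly the one you flag at the end, and it cannot be repaired in the way you suggest. Since $S$ is only a measurable subset of $I$, $g$ can change sign between two points of $S$ while all of its zeros witnessing that change lie in $I\setminus S$; such a sign change is not ``charged to'' any zero in your multiset $\{x_i\}\subset S$, so no polynomial built only from the zeros of $g$ \emph{in $S$} can agree in sign with $g$ on $S$, and its degree cannot be controlled by $\sum_i m_i$. Indeed, for disconnected $S$ the moment conditions do not force $k+1$ zeros inside $S$ at all: if $S$ is a union of $m$ intervals, up to $m-1$ of the required sign changes of $g|_S$ can occur across gaps of $S$. What the moment conditions do force is at least $k+1$ sign changes of $g$ restricted to $S$, hence at least $k+1$ zeros of $g$ in the ambient interval $I$. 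Note that the paper's own proof has the same slippage: it takes the $\xi_i$ to be the roots of $g$ in $I$ and really establishes the ``$k+1$ roots in $I$'' version, which is also all that the downstream Hermite-interpolation step (\Cref{lem:hermite-interpolation-error}) needs, since the nodes only have to lie in $[a,b]$. If you restate the claim with roots counted in $I$ and take your $x_i$ to be \emph{all} zeros of $g$ in $I$ (under the contradiction hypothesis these are finitely many, of finite multiplicity, and the parity of the multiplicity governs whether the sign flips), then your construction of $h$ and the strict positivity of $\int_S e^{\phi_p(x)}\bigl(e^{g(x)}-1\bigr)h(x)\,dx$ go through, giving a proof that is in some respects lighter than the paper's: you never need $\lambda|q|<|g|$, hence no matching of multiplicities between the auxiliary polynomial and $g$.
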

  \begin{proof}[Proof of \Cref{clm:information-projection-roots}]
  To reach a contradiction, assume that it has $k$ (or fewer) roots.
  Let $\xi_1, \ldots, \xi_s \in I $ be the
  distinct roots of $g(x) = 0$ ordered in increasing order and let $m_1,\ldots,
  m_s$ be their multiplicities.
Denote by $I_0, \ldots, I_s$ the partition of $I$
  using the roots of $g(x)$, that is
  $$
  I_0 =
  (-\infty, \xi_1] \cap I
  = [\xi_0, \xi_1]
  ,\
  I_1 = [\xi_1, \xi_2] ,\
  \ldots,\
  I_s = [\xi_s, \xi_{s+1}]
  =
  [\xi_s, +\infty) \cap I
  \,.
  $$
  Let $q$ be the polynomial that has the same roots as $g(x)$ and also the same
  sign as $g(x)$ in every set $I_j$ of the partition. We claim that there exists
  $\lambda_j > 0$ such that, for every $j$ and $x \in \mrm{int}(I_j)$, the
  expression $\phi_f(x) - (\phi_p(x) + \lambda q(x))$ has the same sign as
  $\phi_f(x) - \phi_p(x)$ and also 
  $|\phi_f(x) - (\phi_p(x) + \lambda_j q(x))| < |\phi_f(x) - \phi_p(x)|$. 
  Indeed, fix an interval $I_j$ and without loss of generality assume that 
  $g(x) > 0$ for all $x \in I_j \setminus \{\xi_{j}, \xi_{j+1}\} $. Then it
  suffices to show that there exists $\lambda_j > 0$ such that 
  $0 < g(x) - \lambda_j q(x) < g(x)$.  Since $q(x) > 0$ for every 
  $x \in \mrm{int}(I_j)$, we  need to choose $\lambda_j < g(x)/q(x)$. Since
  $\xi_{j}$ is a root of the same multiplicity of both $g(x)$ and $q(x)$ and
  $g(x), q(x) > 0$ for all $x \in \mrm{int}(I_j)$ we have 
  $\lim_{x \to \xi_j^+} \frac{g(x)}{q(x)} = a > 0$. Similarly, we have 
  $\lim_{x \to \xi_{j+1}^-} \frac{g(x)}{q(x)} = b > 0$. We can now define the
  following function
  \begin{align*}
    h(x) =
    \begin{cases}
      a, &x = \xi_j \\
      g(x)/q(x), &\xi_j < x < \xi_{j+1} \\
      b, &x = \xi_{j+1}
    \end{cases}\,.
  \end{align*}
  We showed that $h(x)$ is continuous in $I_j = [\xi_j, \xi_{j+1}]$ and therefore
  has a minimum value $r_j> 0$ in the closed interval $I_j$.  We set $\lambda_j =
  r_j$.  With the same argument as above we obtain that for each interval $I_j$ we
  can pick $\lambda_j > 0$.  Since the number of intervals in our partition is
  finite we may set $\lambda = \min_{j=0,\ldots,s} \lambda_j$ and still have
  $\lambda > 0$.

  We have shown that the polynomial $r(x) = \phi_p(x) + \lambda q(x)$ is
  almost everywhere, that is apart from a measure zero set, \emph{strictly}
  closer to $\phi_f(x)$.  In particular, we have that $\phi_p(x)$ for every
  $x \in [0,1]$, $|\phi_f(x) - r(x)| \leq |\phi_f(x) - \phi_p(x)|$ and for
  every $x \in S \setminus \{\xi_0, \ldots, \xi_{s+1}\}$ it holds $|\phi_f(x)
  - r(x)| < |\phi_f(x) - \phi_p(x)|$.  Moreover, by construction we have that
  $\phi_f(x) - r(x)$ and $\phi_f(x) - p(x)$ are always of the same sign.
  Finally, the degree of $r(x)$ is at most $k$.  Using
  \Cref{lem:kl-monotonicity} we obtain that $\dkl(\D(f, S) \| \D(r, S) )
  < \dkl(\D(f,S) \| \D(p, S) )$, which is impossible since we know that $p$
  is the polynomial that minimizes the Kullback-Leibler divergence.
\end{proof}
  We are now ready to finish the proof of our lemma.  Using
  \Cref{lem:hermite-interpolation-error} and
  \Cref{clm:information-projection-roots} we have that
$\phi_p$ and $\phi_f$ are close not only in $S$ but in the whole interval $I$.
  In particular, for every $x \in I$ it holds
  \begin{equation}
    \label{eq:kl_minimizer_pointwise}
    \left|f(x) - \log \int_S e^{f(x)} \d x - p(x)
  + \log \int_S e^{p(x)} \d x \right| \leq \frac{M}{(k+1)!} R^{k+1} := W_k \,.
\end{equation}
  Using the above bound together with \Cref{lem:kl_quadratic_upper},
  where we set $c = \log \frac{\psi(f,I) \psi(p,S)}{\psi(p,I) \psi(f,S)}$,
we obtain
  \[ \dkl(\D(f,I)\| \D(p,I)) \leq e^{W_k} W_k^2. \]

\subsection{Proof of \Cref{thm:one_dimensional_aproximate_extrapolation}}
\label{sec:proof:one_dimensional_aproximate_extrapolation}

  We first show that the minimizer of the Kullback-Leibler divergence
belongs to the set $D_k$. Let $q^* = \argmin_{q \in \mathcal{Q}_k} \dkl(\D(f,S) \|
\D(q, S))$ be the minimizer over the set of degree $k$ polynomials.
From the assumption that $f \in L_\infty(I,B)$ we have that
$e^{-B} \alpha \leq \int_S e^{f(x)} \d x \leq e^B$ and therefore
$|\psi(f, S)| \leq B + \log(1/\alpha)$. We know from 
\Cref{eq:kl_minimizer_pointwise} that for all $x \in I$ it holds
\[ |q(x) - \psi(q, S) - f(x) + \psi(f, S)| \leq W_k = \frac{M^{k+1}}{(k+1)!}. \]
In particular, for $x = 0$ using the above inequality we have
$|\psi(q^*, S)| \leq W_k + |\psi(f,S)| + |f(0)| \leq W_k + 2 B + \log(1/\alpha) $.
Therefore, $q^* \in D_k$. From the Pythagorean identity of the information
projection we have that for any other $q \in D_k$ it holds
\[ \dkl(\D(f, S) \| \D(q, S))
  = \dkl(\D(f, S) \| \D(q^*, S)) + \dkl(\D(q^*, S) \| \D(q, S)). \]
Therefore, from the definition of $q$ as an approximate minimizer with
optimality gap $\eps$ we have that $\dkl(\D(q^*, S) \| \D(q, S))  \leq \eps$ and
from Pinsker's inequality we obtain
$\dtv(\D(q^*, S), \D(q, S)) \leq \sqrt{\eps}$.  Using the triangle inequality,
we obtain
\[ \dtv(\D(q,I), \D(f, I))
  \leq
  \dtv(\D(q,I), \D(q^*,I))
  +
  \dtv(\D(q^*,I), \D(f,I)) \]
From \Cref{thm:one_dimensional_exact_extrapolation} and Pinsker's
inequality we obtain that
$\dtv(\D(q^*,I), \D(f,I)) \leq e^{W_k/2} W_k$.
Moreover, from \Cref{lem:distance_reduction} we have that
$\dtv(\D(q, I), \D(q^*, I)) \leq 4 e^{10 B} (2 C/\alpha)^{k+8} \sqrt{\eps} $,
where $C$ is the absolute constant of \Cref{thm:cw_convex}.

\section{Missing Proofs for Multi-Dimensional Densities} \label{sec:app:multiDimensional}

  In this section we provide the proofs of the lemmas and theorems presented in
Section \ref{sec:multivariate}.

\subsection{Proof of \Cref{lem:taylor_approximation}} \label{sec:app:lem:taylor_approximation}
We remind that $\mathcal{Q}_{d, k}$ is the space of polynomials of degree at most
$k$ with $d$ variables and zero constant term, where we might drop $d$ from the
notation if it is clear from context.

  The bound on the norm $\snorm{\infty}{\1_K p}$ follows directly from Taylor's
  theorem. In particular, using \Cref{thm:TaylorTheoremApp} we obtain that
  there exists the Taylor polynomial $f_k(\cdot;\vec 0)$ of degree $k$ around
  $\vec 0$ satisfies $\snorm{\infty}{\1_K (f - f_k)} \leq (15 M R d/k)^{k + 1}$.

The bound on the Kullback-Leibler now follows directly from the following
simple inequality that bounds the Kullback-Leibler divergence in terms of the
$\ell_\infty$ norm of the log-densities.
  We have
  \begin{align}
    \label{eq:kl_infty_upper_bound}
  \dkl(\D(f, S) \| \D(g, S))
  &= \int_S \D(f, S; \bx) (f(\bx) - g(\bx)) \d \bx
  + \log\left(\int_S e^{g(\bx)} \d \bx \right) -
  \log\left(\int_S e^{f(\bx)} \d \bx \right)
  \nonumber
  \\
  &\leq  \snorm{\infty}{\1_S (f - g)}
  + \log\left(\int_S e^{f(\bx) + \snorm{\infty}{\1_S(f-g)}} \d \bx \right) -  \log\left(\int_S e^{f(\bx)} \d \bx \right)
  \nonumber
  \\
  &\leq  2 \snorm{\infty}{\1_S (f - g)}\,.
\end{align}

The polynomial provided by \Cref{thm:TaylorTheoremApp} does not
necessarily have zero constant term.  We can simply subtract this constant and
show that the $L_\infty$ norm of the resulting polynomial does not grow by a
lot.  Using the triangle inequality we get
  \begin{align*}
  \snorm{\infty}{\1_K (f_k - f_k(\vec{0}))}
  &\leq
  \snorm{\infty}{\1_K f}
  +
  \snorm{\infty}{\1_K (f_k - f)}
    + |f_k(\vec 0)|
    \\
  &\leq
  2 B
  +
  M \left(\frac{15 R d}{k}\right)^{k+1}
\,.
  \end{align*}
  Finally, we observe that the polynomials $f_k$ and $f_k-f_k(\vec 0)$
  correspond to the same distribution after the normalization, therefore
  it still holds $\dkl(\D(f, S)\| \D(f_k - f_k(\vec 0), S)) =
  \dkl(\D(f, S) \| \D(f_k, S)) \leq 2 (15 M R d/k)^{k+1}$. \qed  
  
\subsection{Proof of Distortion of Conditioning: \Cref{lem:distance_reduction}} \label{sec:proof:distance_reduction}

The Distortion of Conditioning Lemma contains two inequalities; an upper bound 
and a lower bound on $\dtv(\D(p,K), \D(q,K))/\dtv(\D(p, K), \D(q, K))$. We begin
our proof with the upper bound and then we move to the lower bound.
\smallskip

\noindent \textbf{Upper Bound.} We first observe that for every set 
$R \subseteq K$ it holds
\begin{equation}
  \label{eq:volume_infty}
  e^{-\snorm{\infty}{\1_K p}} \vol(R)
  \leq
  \int_R e^{p(\bx)} \d \bx
  \leq e^{\snorm{\infty}{\1_K p}} \vol(R)
\end{equation}
which implies that $|\psi(p, R)|
\leq \snorm{\infty}{\1_K p} + \log(1/\vol(R))$. 
\smallskip

\noindent Now to prove the upper bound on the ratio 
$\dtv(\D(p,K), \D(q,K))/\dtv(\D(p, K), \D(q, K))$, we will prove an upper bound
on $\dtv(\D(p,K), \D(q,K))$ and a lower bound on $\dtv(\D(p, K), \D(q, K))$. We 
begin with the lower bound on $\dtv(\D(p, K), \D(q, K))$.
\begin{align}
  \label{eq:tvd_volume_lower_bound}
2 \dtv(\D(p,S), \D(q,S))
&= \int_S
\abs{
\frac{e^{p(\bx)}}{e^{\psi(p,S)}} -
\frac{e^{q(\bx)}}{e^{\psi(q,S)}}
}
  \d \bx
  \nonumber
  \\
&\geq
\min_{\bx \in S}\left(\frac{e^{p(\bx)}}{e^{\psi(p,S)}}\right)
  \int_S
  \abs{
  1 - \frac{e^{-p(\bx)}}{e^{-\psi(p,S)}}
    \cdot
    \frac{e^{q(\bx)}}{e^{\psi(q, S)}}
  }
  \d \bx
  \nonumber
  \\
  &\geq
  \frac{e^{-2 \snorm{\infty}{\1_K p}}}{\vol(S)}
  \int_S \left| 1 - e^{r(\bx)} \right| \d \bx\, ,
\end{align}
where $r(\vec x) = q(\vec x) - \psi(q, S) - (p(\vec x) - \psi(p, S))$. For
some $\gamma > 0$ we define the set
$Q = K \cap \{\vec  z : |r(\vec z)|\leq \gamma \}$. Using
\Cref{thm:cw_convex} for the degree $k$ polynomial $r(\vec x)$ and
setting $q = 2k$,
$\gamma = \left(\frac{\vol(S)}{2 C \min\{d, 2k\}}\right)^k \sqrt{\int_K (r(x))^2 \d \bx}$,
we get that $\vol \left(Q\right) \leq \vol(S)/2$. Using these definitions we have
\begin{align*}
\int_S |1- e^{r(\bx)}| \d \bx
\geq
\int_{S \setminus Q}
|1- e^{r(\bx)}|
\d \bx
\geq
\frac{\vol(S)}{2}
\min_{\bx \in S\setminus Q}|1-e^{r(\bx)}|
\,.
\end{align*}
Since $|r(\bx)| \geq \gamma $ for all $x \in S \setminus Q$ we have
that if $\gamma \geq 1$ then from the inequality $|1-\me^x | \geq 1/2$ for 
$|x| > 1$ and from \Cref{eq:tvd_volume_lower_bound} we obtain
\[ 2 \dtv(\D(p,S), \D(q, S)) \geq \frac{ e^{-2 \snorm{\infty}{\1_K p}} }{4}
\]
If $\gamma < 1$ then we can use the inequality
$|1-\me^{x}|\geq |x|/2$  for $|x| \leq 1$ together with
\Cref{eq:tvd_volume_lower_bound} to get
\begin{align*}
  2 \dtv(\D(p,S), \D(q, S))
  & \geq \frac{e^{-2 \snorm{\infty}{\1_K p}}}{4} \vol(S) \cdot \gamma. \end{align*}
and hence for every value of $\gamma$ we have that
\begin{align} \label{eq:proof:distortion:lowerBoundOnConditionalTV}
  2 \dtv(\D(p,S), \D(q, S))
  & \geq \frac{e^{-2 \snorm{\infty}{\1_K p}}}{4} \min\{\vol(S) \cdot \gamma, 1\}. \end{align}
Next we find an upper bound on $\dtv(\D(p,K), \D(q,K))$. In particular, we are
going to relate the total variation distance of $\D(p)$ and $\D(q)$ with the
integral $\int_K (r(\bx))^2 \d \bx$. Applying \Cref{lem:kl_quadratic_upper}
with $c = -(\psi(q,K) - \psi(p, K)) + (\psi(q, S) - \psi(p,S))$ we have that
\begin{align*}
  \dkl(\D(p, K)\| \D(q, K))
  &\leq
  e^{\snorm{\infty}{\1_K r}}
  \int_K \D(p, K; \bx) (r(\bx))^2 \d \bx
  \leq
  e^{\snorm{\infty}{\1_K r} + 2 \snorm{\infty}{1_K p}}
  \int_K (r(\bx))^2\, \d \bx .
\end{align*}
From \Cref{eq:volume_infty} we obtain that
$\snorm{\infty}{\1_K r} \leq 2 \snorm{\infty}{\1_K p} + 2 \snorm{\infty}{\1_K q} + 2 \log(1/\vol(S))$.
Now, using Pinsker's and the above inequality we obtain
\begin{align*}
  \dtv(\D(p, K), \D(q, K))
  &\leq
  \sqrt{\dkl(\D(p, K)\| \D(q, K))}
  \leq
  \frac{e^{2 \snorm{\infty}{\1_K p} + \snorm{\infty}{\1_K q}}}{\vol(S)}
  \sqrt{\int_K (r(\bx))^2 \d \bx}
  \\
  &\leq
  e^{2 \snorm{\infty}{\1_K p} + \snorm{\infty}{\1_K q}}
  \frac{(2 C \min\{d, 2k\})^k}{\vol(S)^{k+1}} \ \gamma.
\end{align*}
which implies our desired upper bound on the ratio
$\frac{\dtv(\D(p, K), \D(q, K))}{\dtv(\D(p, S), \D(q, S))}$, using 
\Cref{eq:proof:distortion:lowerBoundOnConditionalTV} and $\vol(S) \le 1$.
\medskip

\noindent \textbf{Lower Bound.} We now show the lower bound on 
$\dtv(\D(p, K), \D(q, K))/\dtv(\D(p,S), \D(q,S))$.
We have that
\begin{align*}
  2 \dtv(\D(p,S), \D(q,S))
  &=
  \int 1_S(\bx) \left|
  \frac{\D(p, K;\bx)}{\D(p, K; S)} -
  \frac{\D(q, K; \bx)}{\D(q, K; S)} \right| \d \bx
  \\
  &= \int 1_S(\bx) \left|
  \frac{\D(p, K; \bx)}{\D(p, K; S)}
  - \frac{\D(q, K; \bx)}{\D(p, K; S)}
  +  \frac{ \D(q, K; \bx) }{\D(p, K;S)}
  - \frac{\D(q, K; \bx)}{\D(q, K;S)}
  \right| \d \bx
  \\
  &\leq
  \frac{1}{\D(p, K; S)}
   \int 1_S(\bx) \left| \D(p, K;\bx) - \D(q, K;\bx) \right|
  \d \bx
  \\
  &+  \int \1_S(\bx)
  \left| \frac{\D(q, K;\bx)}{\D(p, K;S)} -
  \frac{\D(q, K;\bx)}{\D(q, K; S)}
  \right| \d \bx
  \\
  &\leq
  \frac{1}{\D(p, K; S)} \dtv(\D(p, K),\D(q, K))
  +  \left| \frac{\D(q, K;S)}{\D(p, K;S)}
  - 1
  \right|
  \\
  &\leq
  \frac{2}{\D(p, K;S)} \dtv(\D(p, K),\D(q, K))\,,
\end{align*}
where for the last step we used the
fact that $|\D(p,K;S) - \D(q, K;S)| \leq \dtv(\D(p,K), \D(q,K))$.
Using again \Cref{eq:volume_infty} we obtain that
\begin{align*}
 \D(p, K; S) & = e^{\psi(p, S)-\psi(p,K)} = \left( \int_S e^{p(\bx)} \d \bx \right) \cdot \left( \int_K e^{p(\bx)} \d \bx \right)^{-1} \\
             & \geq \frac{e^{-\snorm{\infty}{\1_K p}} \vol(S)}{e^{\snorm{\infty}{\1_K p}}\vol(K)} \geq e^{-2 B} \frac{\vol(S)}{\vol(K)},
\end{align*}
and since $\vol(K) = 1$ the wanted bound of the lemma follows. \qed

\subsection{Proof of \Cref{thm:low_dimensional_information_theoretic}}
\label{sec:proof:low_dimension_information_theoretic}

From \Cref{lem:taylor_approximation} we obtain that by choosing
  $\vec 0 \in K$, there exists $\vec v$ such that
  $\snorm{\infty}{\1_K \vec v^T \vec m_k(\bx) } \leq
  2 B + \left(15 M d/k\right)^{k+1}$.
  Moreover, from the same lemma we have that
  $
  \min_{\vec w \in D} \dkl(\D(f, S)\| \D(\vec w, S))
  \leq \dkl(\D(f, S)\| \D(\vec v, S))
  \leq 2 \left(15 M d/k\right)^{k+1}
  $.
  To simplify notation set $r_k = 2 \left(15 M d/k\right)^{k+1}$.
  Now, let $q(\bx) = \vec u^T\vec m_k(\bx)$ be any approximate minimizer in $D$ of
  the KL-divergence between $\D(q, S)$ and $\D(f, S)$ that
  satisfies
  $$\dkl(\D(f,S) \| \D(\vec u, S))
  \leq \min_{\vec w \in D} \dkl(\D(f, S)\| \D(\vec w, S))
  + \eps
  \leq r_k + \eps.
    $$
Using the triangle inequality, we have
$$
\dtv(\D(f, K), \D(\vec u, K))
\leq \dtv(\D(f, K), \D(\vec v, K))
+ \dtv(\D(\vec v, K), \D(\vec u, K)).
$$
Using \Cref{lem:distance_reduction} we obtain that
$\dtv(\D(\vec v, K), \D(\vec u, K)) \leq
U \dtv(\D(\vec v, S), \D(\vec u, S))$
where $U = 4 e^{15 B} (2 C d)^{k}/\alpha^{k+3}$.
Using again the triangle inequality we obtain that
\begin{align*}
\dtv(\D(\vec v, K), \D(\vec u, K))
&\leq
U
\left(
\dtv(\D(\vec v, S), \D(f,S)) + \dtv(\D(f,S), \D(\vec u, S))
\right)
\end{align*}
Overall, we have  proved the following important inequality that shows that
we can extend the conditional information to whole set $K$ without increasing
the error by a lot.  In other words, the polynomial with parameters $\vec u$
that we found by (approximately) minimizing the Kullback-Leibler divergence
to the \emph{conditional distribution} is a good approximation on the whole
convex set $K$.
\begin{align*}
\dtv(\D(f, K), \D(\vec u, K))
&\leq
\dtv(\D(\vec v, K), \D(f,K))
+
U
\left(
\dtv(\D(\vec v, S), \D(f,S)) +   \dtv(\D(f,S), \D(\vec u, S))
\right)
\\
&\leq
(U+1) (2 \sqrt{r_k} + \sqrt{\bar{\eps}})
\,.
\end{align*}
where we set $\bar{\eps} =
2^{- \wt{\Omega}\Big(\frac{d^3 M}{\alpha^2} + B \Big)}
(1/\eps)^{-\Omega(\log(d/\alpha))}
$ is the optimality gap of the vector $\vec u$.
For the last inequality we use Pinsker's inequality to
get that $\dtv(\D(\vec v, S), \D(f,S)) \leq
\sqrt{ \dkl(\D(\vec v, S)\| \D(f,S)) }
\leq \sqrt{r_k}$ since the guarantee of \Cref{lem:taylor_approximation}
holds for every subset $R \subseteq K$.
Using again Pinsker's inequality to upper bound the other two total variation distances
we obtain the final inequality.
Substituting the values of $U, L$ we obtain that
\begin{align*}
& (U+1)(2 \sqrt{r_k})
=  O\left(e^{15 B} \frac{(2 C d)^k}{\alpha^{k+3}}
  \frac{(15 M d)^{k/2 + 1/2}}{k^{k/2}}
  \right)
  \\
&=O\left(
  \exp\left( 15 B + \log\left(\frac{\sqrt{M d}}{\alpha^3} \right)
    + k \log \left(C' \frac{\sqrt{d^3 M}}{\alpha}\right)
      - k \log \sqrt{k}
      \right)
  \right),
\end{align*}
where $C'$ is an absolute constant.
Therefore, for $k = O( d^3 M /\alpha^2 + B) + 2 \log(1/\eps)$ it holds that
$U(2 \sqrt{r_k}) \leq \eps/2$.  Moreover, we observe that
$U \sqrt{\bar{\eps}} \leq \eps/2$ and therefore for this value of
$k$ we have
$
\dtv(\D(f, K), \D(\vec u, K)) \leq  \eps/2 + \eps/2 \leq \eps.
$
\qed

\subsection{Proof of \Cref{thm:KL_minimization}} \label{sec:proof:KL_minimization}

We start with two lemmas that we are going to use in our proof of 
\Cref{thm:KL_minimization}.

\begin{lemma}[Theorem 46 of \cite{SBGK18}] \label{lem:coefficient_bound_unit_cube}
    Let $p$ be a polynomial with real coefficients on $d$ variables with some
  degree $k$ such that $p \in L_\infty([0,1]^d, B)$.  Then, the magnitude of
  any coefficient of $p$ is at most $B (2k)^{3 k}$ and the sum of magnitudes
  of all coefficients of $p$ is at most $B \min((2 (d + k))^{3 k}, 2^{O(d k)})$.
\end{lemma}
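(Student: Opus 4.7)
The plan is to control each monomial coefficient by extracting it from a mixed partial derivative of $p$ at the origin and bounding those derivatives via an iterated application of Markov's inequality; the sum-of-magnitudes bound then follows either by counting multi-indices or by passing to a tensor-product Chebyshev basis.

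The univariate building block is the classical Markov inequality: if $q$ is a univariate polynomial of degree at most $k$ on $[0,1]$ with $\|q\|_{L_\infty([0,1])}\le B$, then $\|q'\|_{L_\infty([0,1])}\le 2k^2 B$, and iterating $j$ times yields $\|q^{(j)}\|_{L_\infty([0,1])} \le (2k^2)^j B \le (2k)^{3j}B$. To lift this to $[0,1]^d$, write $p(\bx)=\sum_{\vec\beta : |\vec\beta|\le k} v_{\vec\beta} \bx^{\vec\beta}$ and fix a multi-index $\vec\beta$ with $|\vec\beta|\le k$. Freezing $x_2,\ldots,x_d$, the $x_1$-slice of $p$ is a polynomial of degree at most $k$ in $x_1$ with sup-norm at most $B$, so applying the iterated univariate Markov inequality $\beta_1$ times gives $\|\dr_1^{\beta_1} p\|_{L_\infty([0,1]^d)} \le (2k)^{3\beta_1} B$ uniformly in the frozen variables. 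Iterating over $x_2,\ldots,x_d$ then yields $\|\dr_{\vec\beta} p\|_{L_\infty([0,1]^d)} \le (2k)^{3|\vec\beta|} B$, and since $v_{\vec\beta} = \dr_{\vec\beta} p(\vec 0)/\vec\beta!$, we get $|v_{\vec\beta}| \le (2k)^{3|\vec\beta|} B / \vec\beta! \le B(2k)^{3k}$, the first conclusion of the lemma.

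For the sum of magnitudes, the $(2(d+k))^{3k}$ bound is a one-line consequence: the number of $\vec\beta$ with $|\vec\beta|\le k$ is $\binom{d+k}{k} \le (d+k)^k$, and multiplying by the per-coefficient bound gives $\sum_{\vec\beta}|v_{\vec\beta}| \le B (d+k)^k (2k)^{3k} \le B(2(d+k))^{3k}$. The alternative $2^{O(dk)}$ bound I would obtain by expanding $p$ in the tensor-product shifted Chebyshev basis $\{T_{\vec\alpha}^*\}$ with $T_j^*(x)=T_j(2x-1)$: by orthogonality with the weight $\prod_i 1/\sqrt{x_i(1-x_i)}$ on $[0,1]^d$, each Chebyshev coefficient $c_{\vec\alpha}$ is bounded by $2^d B$, and converting back to the monomial basis uses the standard fact that each univariate $T_j^*$ has monomial $\ell_1$-coefficient-norm at most $4^{O(j)}$; summing across multi-indices produces a sum bound of $2^d B \cdot \binom{d+k}{k} \cdot 4^{O(k)} = B \cdot 2^{O(dk)}$. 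Taking the minimum of the two bounds gives the lemma.

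The main obstacle is the iterated Markov step in the multivariate setting. The subtlety is that, after freezing $x_2,\ldots,x_d$, the univariate Markov bound on $\dr_1^{\beta_1} p$ must hold \emph{uniformly} over the frozen coordinates so that the supremum over $[0,1]^d$ obeys the same bound, and an identical uniformity must survive when Markov is subsequently applied in the second, third, and later variables. This care is needed to avoid an accumulation of dimension-dependent factors that would prevent the stated dimension-free $(2k)^{3k}$ per-coefficient bound; fortunately, the only quantities entering the bound $\|q^{(j)}\|_{L_\infty([0,1])}\le (2k)^{3j}B$ are the degree and sup-norm of the slice, both of which are controlled uniformly across slices by the hypothesis $p \in L_\infty([0,1]^d, B)$ and $\deg p \le k$.
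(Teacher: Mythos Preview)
The paper does not prove this lemma at all: it is quoted verbatim from \cite{SBGK18}, with only the one-line remark that the $2^{O(dk)}$ alternative ``follows easily from the single dimensional bound $2^{O(k)}$.'' So there is no proof in the paper to compare against; your proposal supplies a self-contained argument where the paper simply cites.

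Your strategy is sound. The iterated Markov step is exactly the right tool for the per-coefficient bound, and your uniformity remark addresses the only real subtlety. One arithmetic slip: in deriving the $(2(d+k))^{3k}$ sum bound you write $(d+k)^k (2k)^{3k} \le (2(d+k))^{3k}$, which is equivalent to $k^{3/2} \le d+k$ and fails for small $d$ and moderate $k$ (e.g.\ $d=1$, $k=9$). The fix is immediate: keep the $1/\vec\beta!$ factor from $v_{\vec\beta} = \dr_{\vec\beta} p(\vec 0)/\vec\beta!$ when you sum. Using $\sum_{|\vec\beta|=j} 1/\vec\beta! = d^j/j!$ gives $\sum_{|\vec\beta|\le k} (2k)^{3|\vec\beta|}/\vec\beta! = \sum_{j\le k} (8k^3 d)^j/j! \le (k+1)(8ek^2 d)^k$, and now $8ek^2 d \le 8e(d+k)^3$ together with $(k+1)e^k \le 8^k$ yields the stated $(2(d+k))^{3k}$.

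Your Chebyshev route to the $2^{O(dk)}$ bound is more explicit than the paper's remark but morally the same: the paper's hint (``from the single dimensional bound $2^{O(k)}$'') presumably means exactly the inductive/tensorized version of the univariate Chebyshev coefficient bound that you spell out.
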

We note that in \cite{SBGK18} the $(2 (d + k))^{3 k}$ upper bound is given,
the other follows easily from the single dimensional bound $2^{O(k)}$.

\begin{lemma}[\cite{Ren92b,Ren92a}] \label{lem:renegar}
    Let $p_i:\R^d \mapsto \R, i\in [m]$ be $m$ polynomials over the reals  each
  of degree at most $k$. Let
  $K = \{\bx \in \R^d:p_i(\bx)\geq 0, \text{ for all } i\in[m]\}$. If the
  coefficients of the $p_i$’s are rational numbers with bit complexity at most
  $L$, there is an algorithm that runs in time $\poly(L,(mk)^d)$ and decides if
  $K$ is empty or not. Furthermore, if $K$ is non-empty,the algorithm runs in
  time $\poly(L,(mk)^d,\log(1/\delta))$ and outputs a point in $K$ up to an
  $L_2$ error $\delta$.
\end{lemma}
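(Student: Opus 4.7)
The plan is to reduce the emptiness question for the semi-algebraic set $K = \{\bx : p_i(\bx) \geq 0, i \in [m]\}$ to locating a finite set of ``sample points,'' one in each connected component of $K$ (or of a nearby semi-algebraic set), and then testing the defining inequalities at each sample. The strategy follows the critical point method from real algebraic geometry: in each connected component of a well-chosen set, a generic distance function attains a minimum, and that minimum is a critical point of a Lagrangian system whose coordinates satisfy a polynomial system whose total Bezout number is $(mk)^{O(d)}$. If one can (i) enumerate sufficiently many sample points, (ii) certify whether each lies in $K$, and (iii) do both within the claimed bit and time budget, the lemma follows.

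First, I would handle the possibly-degenerate nature of $K$ by a symbolic perturbation. Replace each constraint $p_i(\bx) \geq 0$ by $p_i(\bx) \geq -\eps$, introduce an infinitesimal $\eps > 0$ over the real closed field $\R\langle \eps\rangle$, and then also add a bounding constraint $\|\bx\|^2 \leq 1/\eps^2$ to ensure a bounded set. The perturbed set $K_\eps$ is bounded, has smooth boundary in generic position, and is nonempty over $\R\langle\eps\rangle$ if and only if $K$ is nonempty over $\R$ (by taking a standard limit as $\eps \to 0^+$). In each connected component of $K_\eps$, pick the minimizer of $\|\bx - \bc\|^2$ for a generic rational $\bc$; by Lagrange's conditions this minimizer satisfies the polynomial system obtained by equating to zero the gradient of a Lagrangian in which each active constraint contributes a multiplier. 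Generic choice of $\bc$ and the $\eps$-perturbation together guarantee the system has only isolated complex solutions.

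Second, I would solve that polynomial system. Using Bezout's bound (or its sharper toric refinements) the system has at most $(mk)^{O(d)}$ isolated solutions in $\mathbb{C}^{d+m}$. To extract real ones, I would apply the univariate representation machinery, e.g., a $u$-resultant or a rational univariate representation, to reduce the system to isolating the real roots of a single univariate polynomial of degree $D = (mk)^{O(d)}$ whose coefficients are rationals of bit complexity $\poly(L, (mk)^d)$. The root isolation can be done by a Sturm sequence or Descartes-based subdivision in time $\poly(L, D)$, and Cauchy/Mahler bounds control the separation and height of the roots; refining each isolating interval to $L_2$-error $\delta$ costs an additional $\poly(\log(1/\delta))$ per root, giving the claimed $\poly(L, (mk)^d, \log(1/\delta))$ output cost. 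Each candidate sample point $\bx^\ast$ is then tested for membership by evaluating the signs of $p_1(\bx^\ast),\ldots,p_m(\bx^\ast)$; the arithmetic is again polynomial in the ambient bit complexity.

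The main obstacle, and the reason a naive approach fails, is control of degeneracies and infinitesimals. Without perturbation, the Lagrangian system can have positive-dimensional solution components and carry no useful sample-point guarantee; with perturbation one must reason over the real closed extension $\R\langle\eps\rangle$, track thresholds ``for all sufficiently small $\eps$,'' and show that the bit complexity introduced by $\eps$ does not blow up. Renegar's contribution is to organize this via effective Puiseux/Thom encodings of algebraic numbers and to carry the sign determination uniformly through the infinitesimal parameters so that only $\poly(L,(mk)^d)$ bits of precision are ever required; the heart of the proof is a careful accounting of resultant sizes and separation bounds that keeps the total cost within the stated budget. Once that accounting is in place, combining it with the three steps above yields both the decision procedure and the approximate point-producing variant claimed in the statement.
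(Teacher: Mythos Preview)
The paper does not prove this lemma at all: it is stated with a citation to Renegar's original papers and is used purely as a black box (specifically, as a separation oracle inside the Ellipsoid method for the projection step of PSGD). So there is nothing in the paper to compare your argument to.

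Your sketch is a reasonable high-level outline of the critical-point method underlying decision procedures for the existential theory of the reals: perturb to get smooth, bounded strata, find critical points of a generic distance function via a Lagrangian system, reduce to a univariate polynomial via a rational univariate representation or $u$-resultant, and then isolate real roots and check signs. That is indeed the architecture of Renegar's algorithm (and of the related Basu--Pollack--Roy treatment). The genuinely hard part, as you correctly flag, is the bit-complexity accounting over the infinitesimal extension and the control of degeneracies; your sketch gestures at this but does not carry it out, so as written it is an outline rather than a proof. For the purposes of this paper, however, nothing more is needed: the authors only require the statement, and a pointer to \cite{Ren92b,Ren92a} is exactly what they provide.
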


\subsubsection{Objective Function of MLE}

Now we define our objective, which is the Kullback-Leibler divergence
between $f$ and the candidate distribution, or equivalently the
maximum-likelihood objective.
\begin{align}
  \label{eq:kullback_leibler_objective}
  L(\bv) &= \dkl(\D(f, S)  \| \D(\bv, S))
  \\
         &=\int \D(f, S;\bx) \log \D(f, S; \bx) \d \vec x
  -\int \D(f, S; \bx) \vec v^T \vec m_k(\vec x) \d \vec x
  + \log \int_S e^{\vec v^T \vec m_k(\bx)} \d \vec x
  \nonumber
\end{align}
The gradient of $L(\bv)$ with respect to $\vec v$ is
  \begin{align}
    \label{eq:one_dimensional_gradient}
    \nabla_{\vec v} L(\vec v)
    &= - \int \D(f,S;\bx) \vec m_k(\vec x) \d \vec x
    + \frac{\int_S \vec m_k(\bx)
    e^{\vec a^T \vec m_k(\vec x)}\d \vec x}
    {\int_S e^{\vec v^T \vec m_k(\vec x)}\d \vec x}
    \nonumber
    \\
    &=
    \E_{x \sim \D(\vec v, S)}[\vec m_k(\vec x)]
    - \E_{x \sim \D(f,S)}[\vec m_k(\vec x)]
  \end{align}
  The Hessian of $L(\vec a)$ with respect to $\vec v$ is
  \begin{align}
    \label{eq:one_dimensional_hessian}
    \nabla^2_{\vec v} L(\vec v)
    &=
    \frac{\int_S \vec m_k(\vec x) \vec m_k^T(\vec x)
    e^{\vec v^T \vec m_k(\vec x)} \d \vec x} {\int_S e^{\vec v^T \vec m_k(\vec x)}}
    -
    \frac{\int_S \vec m_k(\vec x) \vec m_k^T(\vec x) e^{\vec v^T \vec m_k(x)} \d \vec x}
    {\left(\int_S e^{\vec v^T \vec m_k(\vec x)} \right)^2}
    \nonumber
    \\
    &=
    \E_{x \sim \D(\vec v, S)}[\vec m_k(\vec x) \vec m_k^T(\vec x)]
    -
  \E_{x \sim \D(\vec v, S)}[\vec m_k(\vec x)]
  \E_{x \sim \D(\vec v, S)} [\vec m_k^T(\vec x)].
  \end{align}
  We observe that the Hessian is positive semi-definite since it is the
  covariance of the vector $\vec m_k(\vec x)$.  Therefore, we verify
  that $L(\bv)$ is convex as a function of $\bv$.

\subsubsection{Convergence of PSGD}

Now, we prove that using \Cref{alg:projectedSGD} we can efficiently
estimate the parameters of a polynomial whose density well approximates the
unknown density $\D(f, [0,1]^d)$ in the whole unit cube.

  We want to optimize the function $L(\bv)$ of 
  \Cref{eq:kullback_leibler_objective} constrained
  in the convex set
  $$D = \left\{
    \vec v \in \R^m~:~ \snorm{\infty}{\1_K \vec v^T \vec m_k(\bx) }
    \leq C
 \right\}.$$
 To be able to perform SGD we need to have unbiased estimates of the gradients.
 In particular, from the expression of the gradient (see
 \Cref{eq:one_dimensional_gradient}) we have that in order to have
 unbiased estimates we need to generate a sample from the distribution $\D(\vec
 v, S)$.  We first observe that the initialization $\vec v^{(0)} \in D$.
Using rejection sampling we can generate with probability at least
 $1-\delta$ a sample uniformly distributed on $S$ after
 $\log(1/\delta)/\alpha$ draws from the uniform distribution on $K$.  Using
 the samples distributed uniformly over $S$ we can use again rejection
 sampling to create a sample from $\D(\bv, S)$ using as base density the
 uniform over $S$.  Since $e^{\vec v^T \vec m_k(\bx)} \leq e^{C}$, the
 acceptance probability is $e^{-C}$.  We need to generate $e^{C}
 \log(1/\delta)$ samples from the uniform on $S$ in order to generate one
 sample from $\D(\vec v, S)$.  Overall, the total samples from the uniform on
 $K$ in order to generate a sample from $\D(\vec v, S)$ with probability
 $1-\delta$ is $O(e^{C} C \log(1/\delta))$.  To generate an unbiased
 estimate of the gradient we can simply draw samples $\bx_t \sim \D(\vec v,
 S), \by_t \sim \D(f, S)$ and then take their difference, i.e.  $\vec g^{(t)}
 = \vec m_k(\bx^{(t)}) - \vec m_k(\by^{(t)})$.  We have $\snorm{2}{\vec
 g^{(t)}}^2 \leq  2 \binom{d+k}{k}$ for any $\bx \in K$.  Moreover, we need a
 bound on the $L_2$ diameter of $D$.  From
 \Cref{lem:coefficient_bound_unit_cube} we have that since $\vec v^T
 \vec m_k(\bx) \in L_\infty(K, C)$ we get that $\snorm{2}{\vec v} \leq
 \snorm{1}{\vec v} \leq C (2(d+k))^k$.  Now, we have all the ingredients to
 use \Cref{lem:sgd_convex}, and obtain that after
 $$
 T = \frac{C^2 2^{O(d k)} \cdot \binom{d+k}{k}}{\eps^2}
 = \frac{C^2 2^{O(dk)}}{\eps^2}
 $$
 rounds, we have a vector $\vec v^{(T)}$ with optimality gap $\eps$.

 We next describe an efficient way to project to the convex set $D$.  The
 projection to $D$ is defined as $\argmin_{\vec u \in D} \snorm{2}{\vec u -
 \vec v}^2$.  We can use the Ellipsoid algorithm (see for example \cite{GLS93})
 to optimize the above convex objective as long as we can implement a
 separation oracle for the set $D$.  The set $D$ has an infinite number of
 linear constraints (one constraint for each $\bx \in K$ but we can still use
 Renegar's algorithm to find a violated constraint for a point $\bv \notin D$.
 Specifically, given a guess $\vec v$ we set up the following system of
 polynomial inequalities,
 \begin{align*}
 &\vec v^T \vec m_k(\bx) \geq C \\
 &0 \leq x_i \leq 1 \text{ for all } i \in [d]\,,
 \end{align*}
 where $\bx$ is the variable. Using \Cref{lem:renegar} we can decide if
 the above system is infeasible or find $\bx$ that satisfies $\vec v^T \vec
 m_k(\bx) \geq C$ in time $\poly(((d+1)k)^{d})$, where we suppress the
 dependence on the accuracy and bit complexity parameters.\footnote{Since the
   dependence of Renegar's algorithm is polynomial in the bit size of the
   coefficients and the accuracy of the solution it is straightforward to do
   the analysis of our algorithm assuming finite precision rational numbers
 instead of reals.}   If Renegar's algorithm returns such an $\bx$ we have a
 violated constraint of $D$.  Since $D$ bounds the absolute value of $\vec
 v^T \vec m_k(\bx)$ we need to run Renegar's algorithm also for the system
 $\{\bx : \vec v^T\vec m_k(\bx) \leq -C, \bx \in K\}$.  The overall runtime
 of our separation oracle is  $\poly(((d+1)k)^{d})$ and thus the runtime of
 Ellipsoid to implement the projection step is also of the same order.
 Combining the runtime for sampling, the projection, and the total number of
 rounds we obtain that the total runtime of our algorithm is
 $ 2^{O(dk + C)}/(\alpha \eps^2)$.
 \qed

\begin{algorithm}[H]
  \caption{Projected Stochastic Gradient Descent given access to samples from $\D(f, S)$.}
  \label{alg:projectedSGD}
  \begin{algorithmic}[1]
    \Procedure{Sgd}{$T, \eta, C$}\Comment{$T$: number of steps,
    $\eta$: step size, $C$: projection parameter.}
    \State $\vec v^{(0)} \gets \vec 0$
    \State Let $D = \{\vec v: \max_{\bx \in K} |\vec v^T \vec m_k(\bx)| \leq C \}$
    \For{$t = 1, \dots, T$}
    \State Draw sample $\vec x^{(t)} \sim \D(\vec v^{(t-1)}, S)$
    and $\vec y^{(t)} \sim \D(f, S)$
    \State $\vec g^{(t)} \gets \vec m_k(\bx^{(t)}) - \vec m_k(\by^{(t)}) $
    \State $\vec v^{(t)} \gets
    \mathrm{proj}_D(\vec v^{(t-1)} - \eta \vec g^{(t)} ) $
    \EndFor\label{euclidendwhile}
    \State \textbf{return}
  \EndProcedure
  \end{algorithmic}
  \end{algorithm}
 
\end{document}